\newcommand{\rev}{\ensuremath\mathrm{rev}}
\newcommand{\FF}{{\mathbb F}}
\newtheorem{remark}{Remark}
\newtheorem{example}{Example}
\title{Block minimal bases $\ell$-ifications of matrix polynomials}
\author{Froil\'an M.\ Dopico\footnotemark[1], Javier P\'{e}rez\footnotemark[2], Paul Van Dooren\footnotemark[3]}
\begin{document}
\maketitle
\slugger{simax}{xxxx}{xx}{x}{x--x}
\renewcommand{\thefootnote}{\fnsymbol{footnote}}
\footnotetext[1]{Departamento de Matem\'{a}ticas, Universidad Carlos III de Madrid,  Avenida de la Universidad 30, 28911, Legan\'{e}s, Spain. Email: {\tt dopico@math.uc3m.es}.
Supported by ``Ministerio de Econom\'{i}a, Industria y Competitividad of Spain" and ``Fondo Europeo de Desarrollo Regional (FEDER) of EU" through grants MTM-2015-68805-REDT, MTM-2015-65798-P (MINECO/FEDER, UE).}
\footnotetext[2]{Department of Mathematical Sciences, University of Montana, USA. Email: {\tt javier.perez-alvaro@mso.umt.edu}.
Partially supported by KU Leuven Research Council grant OT/14/074 and by the Belgian network DYSCO (Dynamical Systems, Control, and Optimization), funded by the Interuniversity Attraction Poles Programme
initiated by the Belgian Science Policy Office.}
\footnotetext[3]{Department of Mathematical Engineering, Universit\'e catholique de Louvain, Avenue Georges
Lema\^itre 4, B-1348 Louvain-la-Neuve, Belgium. Email: {\tt paul.vandooren@uclouvain.be}. Supported by  the Belgian network DYSCO (Dynamical Systems, Control, and Optimization), funded by the Interuniversity Attraction Poles Programme initiated by the Belgian Science Policy Office.
This work was partially developed while Paul Van Dooren held a ``Chair of Excellence'' at Universidad Carlos III de Madrid in the academic year 2017--18.}

\begin{abstract}
The standard way of solving a polynomial eigenvalue problem associated with a matrix polynomial starts by embedding the matrix coefficients of the polynomial into a matrix pencil, known as a strong linearization.
This process transforms the problem into an equivalent generalized eigenvalue problem.
However, there are some situations in which is more convenient to replace linearizations  by other low degree matrix polynomials.
This has  motivated the idea of a strong $\ell$-ification of a matrix polynomial, which is a matrix polynomial of degree $\ell$ having the same finite and infinite elementary divisors, and the same numbers of left and right minimal indices as the original matrix polynomial.
We present in this work a novel method for constructing strong $\ell$-ifications of matrix polynomials of size $m\times n$ and grade $d$ when $\ell< d$, and $\ell$ divides $nd$ or $md$.
This method is based on a family called  ``strong block minimal bases matrix polynomials'', and relies heavily on properties of dual minimal bases.
We show how strong block minimal bases $\ell$-ifications can be constructed from the coefficients of a given matrix polynomial $P(\lambda)$.
We also show that these $\ell$-ifications satisfy many desirable properties for numerical applications: they are strong $\ell$-ifications regardless of whether $P(\lambda)$ is regular or singular, the minimal indices of the $\ell$-ifications   are related to those of $P(\lambda)$ via constant uniform shifts, and eigenvectors and minimal bases of $P(\lambda)$ can be recovered from those of any of the strong block minimal bases $\ell$-ifications.
In the special case where $\ell$ divides $d$, we introduce a subfamily of strong block minimal bases matrix polynomials named ``block Kronecker matrix polynomials'', which is shown to be a fruitful source of companion $\ell$-ifications.
\end{abstract}
\begin{keywords}
matrix polynomial, minimal indices, dual minimal bases, linearization, quadratification, strong $\ell$-ification, companion $\ell$-ification, dual minimal bases matrix polynomial, block Kronecker matrix polynomial 
\end{keywords}
\begin{AMS}
 65F15, 15A18, 14A21, 15A22, 15A54, 93B18
\end{AMS}

\pagestyle{myheadings}
\thispagestyle{plain}
\markboth{F.\ M.\ DOPICO, J. P\'{E}REZ, P.\ VAN DOOREN}{BLOCK MINIMAL BASES $\ell$-IFICATIONS}

\section{Introduction}\label{sec:intro}

Minimal (polynomial) bases are an important type of bases of rational vector subspaces  used extensively in many areas of applied mathematics.
They were introduced by Dedekind and Weber in \cite{Dedekind}, where they are called ``normal bases'', in the context of valuation theory.
Since then, they have played an important role in multivariable linear systems theory, coding theory, control theory, and in the spectral theory of rational and polynomial matrices.
For detailed introductions to minimal bases, their algebraic properties,  computational schemes for constructing such bases from  arbitrary polynomial bases, their robustness under perturbations, and their role in the singular structure of singular rational and polynomial matrices, we refer the reader to the classical works \cite{Gantmacher,Kailath,Wolovich}, the  works \cite{AVV05,Forney,perturbation} and  \cite{Filtrations}, where  an elegant approach to minimal bases via filtrations is presented.


In this paper, we are interested in the use of minimal bases as a tool for solving polynomial eigenvalue problems \cite{rational,FFP,FPJP}. 
We recall that the \emph{complete polynomial eigenvalue problem (CPEP)} associated with a regular matrix polynomial consists in computing all the eigenvalues (finite and infinite) of the  polynomial, while for a singular matrix polynomial, it consists in computing all the eigenvalues (finite and infinite) and all the minimal indices of the polynomial.
One of the most common strategies for solving a CPEP is to transform it into a \emph{generalized eigenvalue problem (GEP)} by using a strong linearization \cite{singular_lin,Strong_lin,Lancaster_book}. 
More specifically, \emph{strong linearization} of a matrix polynomial $P(\lambda)$ is a matrix pencil (i.e., a matrix polynomial of degree at most 1) having the same finite and infinite elementary divisors (and, thus, the same eigenvalues) and the same number of left and right minimal indices. 
Given a strong linearization $L(\lambda)$ of a matrix polynomial $P(\lambda)$, the CPEP associated with $P(\lambda)$ can be solved by applying the QZ algorithm \cite{QZ} or the staircase algorithm \cite{staircase,VanDooren83} to $L(\lambda)$, provided that the minimal indices of $L(\lambda)$ are related with those of $P(\lambda)$ by known rules.
In many applications, the eigenvectors of regular matrix polynomials and the minimal bases of singular matrix polynomials are of interest as well.
In these cases, the strong linearization $L(\lambda)$ should also allow the recovery of eigenvectors and minimal bases of the original matrix polynomial $P(\lambda)$.

Due to their many favorable properties, the most common strong linearizations used in practice to solve CPEP's are the well-known \emph{Frobenius companion forms} \cite{Lancaster_book}.
Indeed, they are constructed from the coefficients of the matrix polynomials without performing any arithmetic operations, they are strong linearizations regardless of whether the matrix polynomials are regular or singular \cite{DTDM10,DTDM12}, the minimal indices of singular polynomials are related with the minimal indices of the Frobenius companion forms by uniform shifts \cite{DTDM10}, the eigenvectors of regular matrix polynomials and minimal bases of singular matrix polynomials  are easily recovered from those of the Frobenius companion forms \cite{DTDM10}, and solving CPEP's by applying a backward stable eigensolver to the Frobenius companion forms is backward stable \cite{FPJP,VanDooren83}.
However, the Frobenius companion forms present some significant drawbacks. 
For instance, they do not preserve any of the most important algebraic structures appearing in applications \cite{GoodVibrations}, they increase significantly the size of the problem, they modify the conditioning of the problem \cite{conditioning}, and they are easily constructed from the  coefficients of the matrix polynomial only if these coefficients are given with respect to the monomial basis.

In the past few years, much effort has been made to constructing strong linearizations that do not present the drawbacks of the Frobenius companion forms.
Concerning the preservation of algebraic structures, there are two main sources available of structure-preserving strong linearizations for structured matrix polynomials. 
The first source is the vector space $\mathbb{DL}(P)$.
This vector space was introduced in \cite{4m-vspace} and further analyzed in \cite{conditioning,BackErrors,GoodVibrations}.
 The second source is based on Fiedler pencils \cite{AV04,DTDM10,DTDM12,Fiedler03} and their different extensions \cite{FPR1,GFPR,FPR2,FPR3,DTDM11,ant-vol11}.
Regarding matrix polynomials expressed in different polynomial bases, strong linearizations can be found in \cite{ACL09,LP16,ChebyFiedler} for the Chebyshev polynomial basis, in \cite{ACL09,Philip} for orthogonal polynomial bases, in \cite{NNT} for degree-graded polynomial bases, in \cite{Bernstein} for the Bernstein polynomial basis, in \cite{Lagrange,Meerbergen} for the Lagrange interpolants basis, and in \cite{Meerbergen} for the Hermite interpolants basis, to name a few recent references.

The notion of strong linearization has been extended to matrix polynomials of arbitrary degree $\ell$ \cite{IndexSum}.
This new notion is named \emph{strong $\ell$-ification}.
A strong $\ell$-ification of a matrix polynomial $P(\lambda)$ is  a matrix polynomial of degree $\ell$ having the same finite and infinite elementary divisors, and the same number of left and right minimal indices as the matrix polynomial $P(\lambda)$.
The first examples of strong $\ell$-ifications of an arbitrary $m\times n$ matrix polynomial $P(\lambda)$ of grade $d$ were given in \cite{IndexSum}.
These $\ell$-ifications were named \emph{Frobenius-like companion forms} of degree $\ell$, because of there resemblance to the Frobenius companion forms.
However, they are defined only when $\ell$ divides $d$.
A more general construction was presented in \cite{FFP}.
This construction is valid for the case where $\ell$ divides $nd$ or $md$, which is the more general condition for which a given construction can provide strong $\ell$-ifications for all matrix polynomials with such size and grade \cite{FFP2}.
Another approach for constructing $\ell$-ifications can be found in \cite{Robol}.
The interest of strong $\ell$-ifications for solving CPEP's  mainly stems from the fact that some even-grade structured matrix polynomials do not have any strong linearization with the same structure due to some spectral obstructions \cite{IndexSum}.
This phenomenon implies the impossibility of constructing structured companion forms for even grades, which suggests that, for even-grade structured matrix polynomials, linearizations should  be replaced by other low-degree matrix polynomials in numerical computations \cite{pal_quadratization}.

There are many (in fact, infinitely many) choices available in the literature for constructing strong linearizations and strong $\ell$-ifications of matrix polynomials.
From a numerical analyst point of view, this situation is very desirable, since one can choose the most favorable construction in terms of various criteria, such as conditioning and backward errors \cite{conditioning,BackErrors}, the basis in which the polynomial is represented \cite{ACL09}, preservation of algebraic structures \cite{pal_quadratization,GoodVibrations}, exploitation of matrix structures in numerical algoritghms \cite{TS-CORK,CORK,Leo2}, etc
However, there has not been a framework providing a way to construct and  analyze all these strong linearizations and strong $\ell$-ifications  in a consistent manner.
Providing such a framework is one of the main goals of this work.
More specifically, our main contributions are the following.
First, we  provide a framework broad enough to accommodate most of the recent work on strong $\ell$-ifications (strong linearizations, strong quadratifications, etc).
This is achieved by introducing the families of \emph{block minimal bases matrix polynomials and strong block minimal bases matrix polynomials}, which unifies the constructions in \cite{FFP2,FPJP,Leo}.
These families rely heavily on the concept of minimal bases \cite{Forney}.
Second, we show that our new framework allows for the construction of infinitely many new strong $\ell$-ifications of matrix polynomials (regular or singular).
These constructions are possible for matrix polynomials of size $m\times n$ and grade $d$ in the case where $\ell$ divides $nd$ or $md$.
In the special case where $\ell$ divides $d$, we introduce the family of \emph{block Kronecker matrix polynomials}.
The advantage of this family over general strong block minimal bases matrix polynomials is that they allow constructing strong $\ell$-ifications without performing any arithmetic operation.
Moreover, some of these $\ell$-ifications are shown to be companion forms \cite[Def. 5.1]{IndexSum} different from the Frobenius-like companion forms in \cite{IndexSum}.
Third,  we  provide the theoretical tools to analyze the algebraic and analytical properties of $\ell$-ifications based on strong block minimal bases pencils in a unified way.
To be more specific, we show how eigenvectors, minimal bases and minimal indices of the matrix polynomials are related with those of the $\ell$-ifications, and that these $\ell$-ifications present one-sided factorizations as those used in \cite{Framework}, useful for performing residual ``local'', i.e., for each particular computed eigenpair, backward error analyses of regular CPEP's solved by using $\ell$-ifications.

We mention in passing that a potential advantage --from the numerical point of view-- of the $\ell$-ifications introduced in this work is that the minimal bases involved in their construction are  particular instances of the so called \emph{full-Sylvester-rank minimal bases} \cite{perturbation}.
Full-Sylvester-rank minimal bases are the only kind of minimal bases for which it is possible to perform a perturbation analysis \cite{perturbation} when fixing the grade of the minimal basis. 
These perturbation results have been successfully used for performing a rigorous backward error analysis of solving CPEP's by using block Kronecker linearizations \cite{FPJP} or by using the $\ell$-ifications introduced by De Ter\'an, Dopico and Van Dooren \cite{FFP,perturbation}.
Thus, we expect that our framework for constructing $\ell$-ifications from dual minimal bases, together with the perturbation results for full-Sylvester-rank minimal bases \cite{perturbation}, will allow us to extend in a future work the backward error results in \cite{perturbation} for the $\ell$-ifications in  \cite{FFP} to a wider family of $\ell$-ifications.

The rest of the paper is as follows.
In Section \ref{sec:def}, we introduce the definitions and notation used throughout the paper, and some basic results needed in other sections.
In Section \ref{sec:minimal_bases}, we recall the notions of minimal bases and dual minimal bases of rational vector subspaces, review some known results for dual minimal bases, and present some basic new results needed in other sections.
In Section \ref{sec:min_ell-ifications}, we introduce the family of strong block minimal bases matrix polynomials, study their properties, and establish the connection of previous works with our work.
 Section \ref{sec:constructing} is devoted to explain the general construction of strong $\ell$-ifications of given matrix polynomials from strong block minimal bases matrix polynomials.
We also introduce in this section the family of block Kronecker matrix polynomials, an important subfamily of  strong block minimal bases matrix polynomials.
This family provides many examples of $\ell$-ifications for matrix polynomials that are constructed without performing any arithmetic operations.
In Section \ref{sec:recovery}, we include  recovery procedures of eigenvectors, and minimal bases and minimal indices of a matrix polynomial from those of its strong block minimal bases matrix polynomials and block Kronecker matrix polynomials.
We also show that strong block minimal bases matrix polynomials admit one-sided factorizations.

\section{Definitions, notation, and some auxiliary results}\label{sec:def}

In this section, we introduce the notation used in the paper, recall some basic definitions and review some basic results.

Although the most relevant case in numerical applications is to consider matrix polynomials with real or complex coefficients, the results in this paper are presented for matrix polynomials with coefficients in arbitrary fields.
Hence, throughout the paper, we use $\mathbb{F}$ to denote an arbitrary field, and $\overline{\mathbb{F}}$ to denote the algebraic closure of $\mathbb{F}$.
By $\mathbb{F}[\lambda]$ and $\mathbb{F}(\lambda)$ we denote, respectively, the ring of polynomials with coefficients from the field $\mathbb{F}$ and the field of rational functions over $\mathbb{F}$.
The set of $m\times n$ matrices with entries in $\mathbb{F}[\lambda]$ is denoted by $\mathbb{F}[\lambda]^{m\times n}$. 
Any element of $\mathbb{F}[\lambda]^{m\times n}$ is called an \emph{$m\times n$ matrix polynomial}, or, just a matrix polynomial.
When $m=1$ (resp. $n=1$), we refer to the matrix polynomial as a \emph{row vector polynomial} (resp. \emph{column vector polynomial}).
A matrix polynomial $P(\lambda)$ is said to be \emph{regular} if it is square and the scalar polynomial $\det P(\lambda)$ is not identically equal to the zero polynomial.
Otherwise, $P(\lambda)$ is said to be \emph{singular}. 
If $P(\lambda)$ is regular and $\det P(\lambda)\in\mathbb{F}$, then $P(\lambda)$ is said to be \emph{unimodular}.
The \emph{normal rank} of a matrix polynomial $P(\lambda)$ is  the rank of $P(\lambda)$ considered as a matrix over the field $\mathbb{F}(\lambda)$.

A matrix polynomial $P(\lambda)\in\mathbb{F}[\lambda]^{m\times n}$ is said to have \emph{grade} $d$ if it can be expressed
in the form
\begin{equation}\label{eq:poly}
P(\lambda)=\sum_{i=0}^d P_i \lambda^i, \quad \mbox{with} \quad P_0,\hdots,P_d\in \mathbb{F}^{m\times n},
\end{equation}
where any of the coefficients, including $P_d$, can be zero. 
The \emph{degree} of $P(\lambda)$, denoted by $\deg(P(\lambda))$, is the maximum integer $k$ such that $P_k\neq 0$. 
When the grade of $P(\lambda)$ is not explicitly stated, we consider its grade equal to its degree.
A matrix polynomial of grade 1 is called a \emph{matrix pencil} or, simply, a pencil.
The vector space of $m\times n$ matrix polynomials of grade $k$ is denoted by $\mathbb{F}_k[\lambda]^{m\times n}$.

For any $d\geq \deg(P(\lambda))$, the \emph{$d$-reversal matrix polynomial} of $P(\lambda)$ is defined as
\[
\rev_d P(\lambda):= \lambda^d P(\lambda^{-1}).
\]
When $P(\lambda)$ is assumed to have grade $d$, then it is assumed that $\rev_d P(\lambda)$ has also grade $d$.

Two matrix polynomials $P(\lambda)$ and $Q(\lambda)$ are said to be \emph{strictly equivalent} if there exist nonsingular constant matrices $E$ and $F$ such that $EP(\lambda)F=Q(\lambda)$, and are said to be \emph{unimodularly equivalent} if there exist unimodular matrix polynomials $U(\lambda)$ and $V(\lambda)$ such that $U(\lambda)P(\lambda)V(\lambda) = Q(\lambda)$.

The \emph{complete eigenstructure} of a regular matrix polynomial consists of its finite and infinite elementary divisors, and for a singular matrix polynomial it consists of its finite and infinite elementary divisors together with its right and left minimal indices.
For more detailed definitions of the complete eigenstructure of matrix polynomials, we refer the reader to \cite[Section 2]{IndexSum}.
Nevertheless, due to its relevance in this work, the singular eigenstructure of matrix polynomials will be briefly reviewed in Section \ref{sec:minimal_bases} (see Definition \ref{def_singular}).

The standard way of solving CPEP's is by linearization.
The definition of linearizations and strong linearizations of matrix polynomials were introduced in \cite{Strong_lin,Lancaster_book} for regular matrix polynomials, and then extended to the singular case in \cite{singular_lin}. 
In \cite{IndexSum}, the notion of (strong) linearization was extended to matrix polynomials of arbitrary degree, given rise to the concept of (strong) $\ell$-ifications. 
All these concepts are introduced in the following definition.
\begin{definition}\label{def:ell-ification}
A matrix polynomial $L(\lambda)$ of degree $\ell>0$ is said to be a \emph{$\ell$-ification} of a given matrix polynomial $P(\lambda)$ of grade $d$ if for some $s\geq 0$ there exist unimodular matrix polynomials $U(\lambda)$ and $V(\lambda)$ such that
\[
U(\lambda)L(\lambda)V(\lambda) = 
\begin{bmatrix}
I_s & 0 \\ 0 & P(\lambda)
\end{bmatrix}.
\]
If, additionally, the matrix polynomial $\rev_\ell L(\lambda)$ is an $\ell$-ification of $\rev_d P(\lambda)$, then $L(\lambda)$ is said to be a \emph{strong $\ell$-ification} of $P(\lambda)$. 
When $\ell=1$, (strong) $\ell$-ifications are called \emph{(strong) linearizations}.
When $\ell=2$, (strong) $\ell$-ifications are called  \emph{(strong) quadratifications}.  
\end{definition}

Any strong $\ell$-ification $L(\lambda)$ of a matrix polynomial $P(\lambda)$  shares with $P(\lambda)$ the same finite and infinite elementary divisors \cite[Theorem 4.1]{IndexSum}. 
However, Definition \ref{def:ell-ification} only guarantees that the number of left (resp. right) minimal indices of $L(\lambda)$ is equal to the number of left (resp. right) minimal indices of $P(\lambda)$.
Except by these constraints on the numbers, $L(\lambda)$ may have a very different set of right and left minimal indices \cite[Corollary 7.12]{IndexSum}. 
Therefore, in the case of singular matrix polynomials, it is important in practice to consider {\em strong $\ell$-ifications with the additional property} that their minimal indices allow  to recover the minimal indices of the polynomial via some simple rule.
The strong $\ell$-ifications introduced in this work present very simple recovery formulas for the minimal indices of the original matrix polynomial.

To easily recognize $\ell$-ifications in certain situations which are of interest in this work, we present Lemma \ref{lemma:antitriglin}. 
This simple result is a simple generalization of \cite[Lemma 2.14]{FPJP}, so we omit its proof.
\begin{lemma} \label{lemma:antitriglin} Let $P(\lambda)\in\mathbb{F}[\lambda]^{m\times n}$  and let $L(\lambda)$ be a degree-$\ell$ matrix polynomial.
 If there exist two unimodular matrix polynomials $\widetilde{U}(\lambda)$ and $\widetilde{V}(\lambda)$ such that
\begin{equation} \label{eq:lintriagrelation}
\widetilde{U}(\lambda)L(\lambda)\widetilde{V}(\lambda) =
\begin{bmatrix}
Z(\lambda) & X(\lambda) & I_{t} \\
Y(\lambda) & P(\lambda) & 0 \\
I_{s} & 0 & 0
\end{bmatrix},
\end{equation}
for some $s \geq 0$ and $t\geq 0$ and  some matrix polynomials $X(\lambda)$, $Y(\lambda)$, and $Z(\lambda)$, then $L(\lambda)$ is an $\ell$-ification of $P(\lambda)$.
\end{lemma}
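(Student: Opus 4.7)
By Definition \ref{def:ell-ification}, it suffices to show that $L(\lambda)$ is unimodularly equivalent to $\mathrm{diag}(I_{s+t}, P(\lambda))$, since $\widetilde{U}$ and $\widetilde{V}$ are unimodular, it is enough to perform unimodular row and column operations on the right-hand side of \eqref{eq:lintriagrelation} until it has that form. The whole strategy is to exploit the two identity blocks $I_s$ in position $(3,1)$ and $I_t$ in position $(1,3)$ as pivots to zero out the three nontrivial blocks $Z(\lambda)$, $X(\lambda)$, and $Y(\lambda)$.

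First I would use the pivot $I_s$ to eliminate the first block column above it. Concretely, left-multiplying the right-hand side of \eqref{eq:lintriagrelation} by the unimodular matrix polynomial
\[
\begin{bmatrix} I_t & 0 & -Z(\lambda) \\ 0 & I_m & -Y(\lambda) \\ 0 & 0 & I_s \end{bmatrix}
\]
subtracts $Z(\lambda)$ times the third block row from the first block row and $Y(\lambda)$ times the third block row from the second block row, yielding
\[
\begin{bmatrix} 0 & X(\lambda) & I_t \\ 0 & P(\lambda) & 0 \\ I_s & 0 & 0 \end{bmatrix}.
\]
Next I would use the pivot $I_t$ to eliminate $X(\lambda)$ via a column operation: right-multiplying by the unimodular matrix polynomial
\[
\begin{bmatrix} I_s & 0 & 0 \\ 0 & I_n & 0 \\ 0 & -X(\lambda) & I_t \end{bmatrix}
\]
subtracts $X(\lambda)$ times the third block column from the second block column, giving the block anti-diagonal form
\[
\begin{bmatrix} 0 & 0 & I_t \\ 0 & P(\lambda) & 0 \\ I_s & 0 & 0 \end{bmatrix}.
\]

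Finally, a suitable constant permutation (which is of course unimodular) applied on both sides reorders the three block rows and block columns into
\[
\begin{bmatrix} I_t & 0 & 0 \\ 0 & I_s & 0 \\ 0 & 0 & P(\lambda) \end{bmatrix} = \begin{bmatrix} I_{s+t} & 0 \\ 0 & P(\lambda) \end{bmatrix}.
\]
Composing $\widetilde{U}(\lambda)$ with the two left factors above (and the permutation) and $\widetilde{V}(\lambda)$ with the column factor (and the permutation) produces the unimodular matrix polynomials $U(\lambda)$ and $V(\lambda)$ required by Definition \ref{def:ell-ification}, with $s$ replaced by $s+t$. I do not expect any real obstacle: the argument is purely formal block Gaussian elimination, and the only point requiring a moment of care is matching the dimensions of the off-diagonal blocks so that the elimination matrices above are well-defined.
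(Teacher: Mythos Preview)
Your argument is correct: the block Gaussian elimination using the pivots $I_s$ and $I_t$ is exactly the standard way to reduce the anti-triangular block matrix to $\mathrm{diag}(I_{s+t},P(\lambda))$, and the block sizes you implicitly use ($Z(\lambda)\in\mathbb{F}[\lambda]^{t\times s}$, $X(\lambda)\in\mathbb{F}[\lambda]^{t\times n}$, $Y(\lambda)\in\mathbb{F}[\lambda]^{m\times s}$) match, so the elimination matrices are well defined and unimodular. The paper actually omits the proof of this lemma, remarking only that it is a simple generalization of \cite[Lemma~2.14]{FPJP}; your write-up supplies precisely the routine verification the authors had in mind.
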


\section{Minimal indices, minimal bases and dual minimal bases}\label{sec:minimal_bases}

We review in this section the notions of minimal indices of singular matrix polynomials, minimal polynomial bases of rational vector spaces and dual minimal bases.

Recall that any rational subspace $\mathcal{W}$ has bases consisting entirely of vector polynomials.
 The \emph{order} of a vector polynomial basis of $\mathcal{W}$ is defined as the sum of the degrees of its vectors \cite[Definition 2]{Forney}.
  Among all of the possible polynomial bases of  $\mathcal{W}$, those with least order are called \emph{minimal (polynomial) bases} of $\mathcal{W}$ \cite[Definition 3]{Forney}. 
  In general, there are many minimal bases of $\mathcal{W}$, but the ordered list of degrees of the vector polynomials in any of its minimal bases is always the same. 
This list of degrees is called the list of \emph{minimal indices} of $\mathcal{W}$.

\begin{remark} Most of the minimal bases appearing in this work are arranged as the rows of a matrix. 
Therefore, with a slight abuse of notation, we say throughout the paper that an $m\times n$ matrix polynomial (with $m < n$) is a minimal basis if its rows form a minimal basis of the rational subspace they span.
\end{remark}

To work in practice with minimal bases we need the following definitions.
\begin{definition} 
The \emph{$i$th row degree} of a matrix polynomial $Q(\lambda)$ is the degree of the $i$th row of $Q(\lambda)$.
\end{definition}
\begin{definition} \label{def:rowreduced}
Let $Q(\lambda)\in\mathbb{F}[\lambda]^{m\times n}$ be a matrix polynomial with row degrees $d_1,d_2,\hdots,d_m$. 
The {\em highest row degree coefficient matrix} of $Q(\lambda)$, denoted by $Q_h$, is the $m \times n$ constant matrix whose $j$th row is the coefficient of $\lambda^{d_j}$ in the $j$th row of $Q(\lambda)$, for $j=1,2,\hdots,m$. 
The matrix polynomial $Q(\lambda)$ is called {\em row reduced} if $Q_h$ has full row rank.
\end{definition}

Theorem \ref{thm:minimal_basis} gives a very useful characterization of minimal bases. 
This result was originally proved in \cite[Main Theorem-Part 2, p. 495]{Forney}.
 The version we present below can be found in  \cite[Theorem 2.14]{FFP2}.
\begin{theorem}\label{thm:minimal_basis}
The matrix polynomial $Q(\lambda)\in\mathbb{F}[\lambda]^{m\times n}$ is a minimal basis if and only if $Q(\lambda)$ is row reduced and $Q(\lambda_0)$ has full row rank for all $\lambda_0 \in \overline{\FF}$.
\end{theorem}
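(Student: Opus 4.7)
The plan is to prove both directions of the equivalence by exploiting that a minimal basis is characterized as a polynomial basis of minimum total order.

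For the forward direction, I would argue by contrapositive. If $Q(\lambda)$ is not row reduced, then $Q_h$ has a nonzero left null vector $c \in \FF^m$, so $c^T Q(\lambda)$ has degree strictly less than $\max_i d_i$; replacing a row of $Q(\lambda)$ of maximal degree at which $c$ has a nonzero entry by the combination $c^T Q(\lambda)$ produces a polynomial basis of the same row space with strictly smaller order. If instead $Q(\lambda_0)$ has a rank drop at some $\lambda_0 \in \overline{\FF}$, then some nonzero vector $c$ satisfies $c^T Q(\lambda_0) = 0$, so $c^T Q(\lambda)$ is divisible by $(\lambda - \lambda_0)$; using $c^T Q(\lambda)/(\lambda - \lambda_0)$ as a replacement for a row in which $c$ is nonzero again yields a polynomial basis of the same row space of strictly smaller order. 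A small technical point is to descend from $\overline{\FF}$ back to $\FF$ when $\lambda_0 \notin \FF$, which can be handled by multiplying by the minimal polynomial of $\lambda_0$ and aggregating Galois conjugates.

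For the backward direction, the key tool is the Predictable Degree Property: if $Q(\lambda)$ is row reduced with row degrees $d_1, \ldots, d_m$, then for every row-vector polynomial $p(\lambda) = (p_1(\lambda), \ldots, p_m(\lambda))$,
\[
\deg\bigl(p(\lambda)\, Q(\lambda)\bigr) \;=\; \max_{i\,:\, p_i \neq 0} \bigl(\deg p_i + d_i\bigr),
\]
because the would-be leading coefficient is a nontrivial linear combination of rows of the full-row-rank matrix $Q_h$ and therefore cannot vanish. I would then use the hypothesis that $Q(\lambda_0)$ has full row rank at every $\lambda_0 \in \overline{\FF}$ to show that every other polynomial basis $B(\lambda)$ of the row space of $Q(\lambda)$ factors as $B(\lambda) = T(\lambda)\, Q(\lambda)$ with $T(\lambda)$ \emph{polynomial}: the pointwise full-rank condition forces the a priori rational $T(\lambda)$ to have no finite poles. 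Combined with the Predictable Degree Property, this factorization implies $\sum_i \deg B_i \geq \sum_i d_i$, proving that no basis has order smaller than $Q(\lambda)$.

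The main obstacle is the backward direction, specifically the step that upgrades the left factor $T(\lambda)$ from rational to polynomial. This is essentially the assertion that the $m \times m$ minors of $Q(\lambda)$ generate the unit ideal in $\FF[\lambda]$, which is equivalent to the pointwise full-rank hypothesis by the Nullstellensatz (or, equivalently, to the Smith form of $Q(\lambda)$ being $[\,I_m\ 0\,]$ up to column permutation). Once this polynomial factorization is secured, the order comparison through the Predictable Degree Property is routine.
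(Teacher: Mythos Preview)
The paper does not give its own proof of this theorem; it is quoted as a known characterization with references to Forney and to \cite[Theorem~2.14]{FFP2}. Your outline follows the classical route, and the backward direction---polynomial factorization $B(\lambda)=T(\lambda)Q(\lambda)$ via the Smith form $[\,I_m\ \ 0\,]$, followed by the Predictable Degree Property together with a nonzero permutation term in $\det T(\lambda)$ to compare orders---is correctly sketched.

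There is, however, a genuine error in the row-reduced branch of the forward direction. You assert that if $c^TQ_h=0$ then $c^TQ(\lambda)$ has degree strictly less than $\max_i d_i$. This fails in general: take
$Q(\lambda)=\left[\begin{smallmatrix}\lambda & 1 & 0\\ 1 & 0 & 0\end{smallmatrix}\right]$,
with row degrees $1,0$ and
$Q_h=\left[\begin{smallmatrix}1 & 0 & 0\\ 1 & 0 & 0\end{smallmatrix}\right]$;
then $c=(1,-1)$ annihilates $Q_h$, yet $c^TQ(\lambda)=(\lambda-1,\,1,\,0)$ still has degree $1$. The point is that $c^TQ_h=0$ mixes leading coefficients coming from rows of \emph{different} degrees, so it says nothing about the coefficient of $\lambda^{\max_i d_i}$ in $c^TQ(\lambda)$. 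The standard repair is a degree-aligned combination: choose $j$ with $d_j$ maximal among the indices where $c_j\neq 0$, and replace row $j$ by $\sum_i c_i\,\lambda^{\,d_j-d_i}\,q_i(\lambda)$. The coefficient of $\lambda^{d_j}$ in this sum is exactly $c^TQ_h=0$, so the new row has degree strictly less than $d_j$ and the total order drops. With this correction (and the Galois-descent remark you already flagged for the rank-drop branch), the forward direction goes through.
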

\begin{remark} Definition \ref{def:rowreduced} and Theorem \ref{thm:minimal_basis} admit obvious extensions to minimal bases arranged as the columns of matrix polynomials, which are used occasionally in this paper (in particular, in Section \ref{sec:recovery}).
\end{remark}

We recall in Definition \ref{def:dualminimalbases} the concept of \emph{dual minimal bases}.
These bases were introduced in \cite{Forney}, and named ``dual minimal bases'' in \cite{FFSP}.
\begin{definition} \label{def:dualminimalbases}
Two matrix polynomials $L(\lambda)\in\mathbb{F}[\lambda]^{m_1\times n}$ and $N(\lambda)\in\mathbb{F}[\lambda]^{m_2\times n}$ are called \emph{dual minimal bases} if $m_1+m_2 = n$, $L(\lambda)N(\lambda)^T=0$ and $L(\lambda)$ and $N(\lambda)$ are both minimal bases.
\end{definition}

\begin{remark}
As in \cite{FPJP}, we use the expression ``$N(\lambda)$ is a minimal basis dual to $L(\lambda)$'', or vice versa, for referring to matrix polynomials $L(\lambda)$ and $N(\lambda)$ as those in Definition \ref{def:dualminimalbases}.
\end{remark}

Theorem \ref{thm:dual_sum} is an important result on the existence of dual minimal bases with prescribed row degrees.
The proof that the condition in Theorem \ref{thm:dual_sum} is necessary comes back at least to \cite{Forney}.
The sufficiency of the condition has been recently proved in \cite[Thm. 6.1]{FFSP}.
\begin{theorem}\label{thm:dual_sum}
There exists a pair of dual minimal bases $K(\lambda)\in\mathbb{F}[\lambda]^{m_1\times (m_1+m_2)}$ and $N(\lambda)\in\mathbb{F}[\lambda]^{m_2\times (m_1+m_2)}$  with row degrees $(\eta_1,\hdots,\eta_{m_1})$ and $(\epsilon_1,\hdots,\epsilon_{m_2})$, respectively, if and only if
\begin{equation}\label{eq:dual_sum}
\sum_{j=1}^{m_1}\eta_j = \sum_{i=1}^{m_2} \epsilon_i.
\end{equation}
\end{theorem}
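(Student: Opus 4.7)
The plan is to handle the two implications separately.

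For \emph{necessity}, I would first identify $N(\lambda)$ as a minimal basis of the right null space $\ker K(\lambda)$ (viewed as a rational matrix over $\mathbb{F}(\lambda)$). Indeed, $KN^T=0$ places each row of $N$ into $\ker K$; these $m_2$ rows are $\mathbb{F}(\lambda)$-linearly independent because $N$ is row-reduced (hence of full row rank over $\mathbb{F}(\lambda)$); and $\dim_{\mathbb{F}(\lambda)}\ker K=(m_1+m_2)-m_1=m_2$, so the rows of $N$ form a basis of $\ker K$ that, by the hypothesis on $N$, is already a minimal basis. Symmetrically, the rows of $K$ form a minimal basis of $\ker N$. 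I would then invoke Forney's classical duality theorem \cite{Forney}: the \emph{order} of a rational subspace $\mathcal{W}\subseteq\mathbb{F}(\lambda)^{m_1+m_2}$, defined as the sum of the row degrees in any of its minimal bases (a well-defined invariant of $\mathcal{W}$), equals the order of its orthogonal complement $\mathcal{W}^{\perp}=\{v:vw^{T}=0\ \text{for all}\ w\in\mathcal{W}\}$. A direct computation gives $(\text{row span of }K)^{\perp}=\ker K=\text{row span of }N$, and applying Forney's equality to this pair yields $\sum_{j}\eta_{j}=\sum_{i}\epsilon_{i}$, which is \eqref{eq:dual_sum}.

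For \emph{sufficiency}, I would proceed by explicit construction. The natural atomic building block is the Kronecker pair
\[
L_{k}(\lambda)=\begin{bmatrix} \lambda & -1 & & \\ & \ddots & \ddots & \\ & & \lambda & -1 \end{bmatrix}\in\mathbb{F}[\lambda]^{k\times(k+1)},\qquad \Lambda_{k}(\lambda)=\begin{bmatrix} \lambda^{k} & \lambda^{k-1} & \cdots & \lambda & 1 \end{bmatrix},
\]
which is directly verified, via Theorem \ref{thm:minimal_basis}, to be a pair of dual minimal bases with row degrees $(1,1,\ldots,1)$ (summing to $k$) on one side and $(k)$ on the other. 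To realize arbitrary prescribed tuples $(\eta_{1},\ldots,\eta_{m_{1}})$ and $(\epsilon_{1},\ldots,\epsilon_{m_{2}})$ subject only to $\sum_{j}\eta_{j}=\sum_{i}\epsilon_{i}$, I would follow the assembly given in \cite[Thm.~6.1]{FFSP}: a careful block-Toeplitz / shifted-Kronecker layout stacks and interleaves copies of the atomic blocks $L_{k}$ and $\Lambda_{k}$ so that both output matrices carry exactly the desired row degrees while continuing to satisfy $KN^{T}=0$ together with the row-reducedness and pointwise full-row-rank conditions demanded by Theorem \ref{thm:minimal_basis}.

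The principal obstacle is entirely on the sufficiency side. A naive block-diagonal concatenation of $(L_{k},\Lambda_{k})$ pairs realizes only the very restricted degree distributions in which one of the two sides has all row degrees equal to one; accommodating a general distribution forces one to weave columns across blocks while \emph{simultaneously} preserving (i) the orthogonality $KN^{T}=0$, (ii) row-reducedness of both $K$ and $N$, and (iii) full row rank of $K(\lambda_{0})$ and $N(\lambda_{0})$ at every $\lambda_{0}\in\overline{\mathbb{F}}$. Even the small target $(\eta_{1},\eta_{2})=(0,2)$ with $\epsilon_{1}=2$ is instructive: the obvious choice $N=\begin{bmatrix}\lambda^{2} & \lambda & 1\end{bmatrix}$ admits no constant vector in its null space, so one is forced to choose a less obvious $N$, such as $N=\begin{bmatrix}1 & \lambda^{2} & 1+\lambda^{2}\end{bmatrix}$, whose null space does contain a constant row. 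Engineering this compatibility uniformly across all admissible degree tuples, while maintaining (i)--(iii), is the technical heart of the construction.
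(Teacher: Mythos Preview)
The paper does not actually prove Theorem~\ref{thm:dual_sum}; it simply records that necessity goes back to Forney~\cite{Forney} and that sufficiency was established in \cite[Thm.~6.1]{FFSP}. Your proposal is aligned with precisely these references: you invoke Forney's order-duality for the forward direction and explicitly defer the construction to \cite[Thm.~6.1]{FFSP} for the converse, so there is nothing to compare---your approach \emph{is} the paper's approach.

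One small comment on your discussion of the obstacle: your illustrative example with $(\eta_1,\eta_2)=(0,2)$, $\epsilon_1=2$ is well chosen and your proposed $N=\begin{bmatrix}1 & \lambda^2 & 1+\lambda^2\end{bmatrix}$ does work (e.g.\ $K=\left[\begin{smallmatrix}1 & 1 & -1\\ 1+\lambda^2 & 0 & -1\end{smallmatrix}\right]$ completes the pair), but you should be aware that this ad hoc fix is exactly what the zigzag machinery of~\cite{FFSP} systematizes; since you are already citing that result, the example is illustrative rather than part of the argument.
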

\begin{remark}\label{remark:zigzag}
Algorithmic procedures for constructing dual minimal bases as those in   Theorem \ref{thm:dual_sum} satisfying \eqref{eq:dual_sum} are presented in \cite[Theorem 6.1]{FFSP}.
Those procedures are based on the construction of \emph{zigzag} and \emph{dual zigzag} matrices \cite[Definitions 3.1 and 3.21]{FFSP} by using the simple algorithm in \cite[Theorem 5.1]{FFSP}.
\end{remark}

A fruitful source of pairs of dual minimal bases that are relevant in this work are the following two matrix polynomials. 
\begin{equation}\label{eq:Lk}
L_k(\lambda):=\begin{bmatrix}
-1 & \lambda  \\
& -1 & \lambda \\
& & \ddots & \ddots \\
& & & -1 & \lambda  \\
\end{bmatrix}\in\mathbb{F}[\lambda]^{k\times(k+1)},
\end{equation}
and
\begin{equation}
  \label{eq:Lambda}
  \Lambda_k(\lambda)^T :=
\begin{bmatrix}
      \lambda^{k} & \cdots & \lambda & 1
\end{bmatrix} \in \mathbb{F}[\lambda]^{1\times (k+1)},
\end{equation}
where here and throughout the paper we occasionally omit some, or all, of the zero entries of a matrix. 
The matrix polynomials $L_k(\lambda)$ and $\Lambda_k(\lambda)^T$ are dual minimal bases \cite[Example 2.2]{FPJP}.
Lemma \ref{lemma:L-Lamb} shows how to obtain easily other pairs of dual minimal bases.

\begin{lemma} \label{lemma:L-Lamb} 
Let $L_k(\lambda)$ and $\Lambda_k(\lambda)^T$ be the matrix polynomials defined, respectively, in \eqref{eq:Lk} and \eqref{eq:Lambda}.
Then, for any $\ell\in\mathbb{N}$ the following statements hold.
\begin{itemize}
\item[\rm (i)] The matrix polynomials $L_k(\lambda^\ell)$ and $\Lambda_k(\lambda^\ell)^T$ are dual minimal bases.
\item[\rm (ii)] For any $p\in\mathbb{N}$, the matrix polynomials $L_k(\lambda^\ell) \otimes I_p$ and $\Lambda_k(\lambda^\ell)^T \otimes I_p$ are dual minimal bases.
\end{itemize}
\end{lemma}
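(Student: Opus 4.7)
The plan is to verify both statements directly using the characterization of minimal bases given in Theorem \ref{thm:minimal_basis} together with the known fact that $L_k(\lambda)$ and $\Lambda_k(\lambda)^T$ are themselves dual minimal bases. For (i), the dimensions are already right: $L_k(\lambda^\ell)$ is $k\times(k+1)$ and $\Lambda_k(\lambda^\ell)^T$ is $1\times(k+1)$. The orthogonality relation $L_k(\lambda^\ell)\,\Lambda_k(\lambda^\ell)=0$ follows by substituting $\mu=\lambda^\ell$ into the polynomial identity $L_k(\mu)\Lambda_k(\mu)=0$.

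It then remains to check that each of the two matrix polynomials is a minimal basis. For $L_k(\lambda^\ell)$, every row has degree exactly $\ell$, and inspection of \eqref{eq:Lk} shows that its highest row degree coefficient matrix is $[\,0\mid I_k\,]$, which has full row rank, so $L_k(\lambda^\ell)$ is row reduced. For any $\lambda_0\in\overline{\FF}$, $L_k(\lambda_0^\ell)$ has full row rank because the leftmost $k\times k$ submatrix is $-I_k$. For $\Lambda_k(\lambda^\ell)^T$, the row is a single polynomial of degree $k\ell$ whose leading coefficient is $1$ and whose constant coefficient is $1$; thus it is row reduced and $\Lambda_k(\lambda_0^\ell)^T$ is never the zero row. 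Both matrix polynomials therefore satisfy the hypotheses of Theorem \ref{thm:minimal_basis}.

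For (ii), the idea is to lift (i) through the Kronecker product with $I_p$. Dimensions still match: $L_k(\lambda^\ell)\otimes I_p$ is $kp\times(k+1)p$ and $\Lambda_k(\lambda^\ell)^T\otimes I_p$ is $p\times(k+1)p$. Using the mixed-product rule,
\[
\bigl(L_k(\lambda^\ell)\otimes I_p\bigr)\bigl(\Lambda_k(\lambda^\ell)\otimes I_p\bigr)=\bigl(L_k(\lambda^\ell)\Lambda_k(\lambda^\ell)\bigr)\otimes I_p = 0.
\]
To confirm the Kronecker products remain minimal bases, I will use two elementary facts: if $M(\lambda)$ has highest row degree coefficient matrix $M_h$, then $M(\lambda)\otimes I_p$ has highest row degree coefficient matrix $M_h\otimes I_p$ (each row of $M(\lambda)$ gets replicated $p$ times with the same degree); and $(M(\lambda)\otimes I_p)(\lambda_0)=M(\lambda_0)\otimes I_p$. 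Since the rank of a Kronecker product equals the product of the ranks, both row reducedness and pointwise full row rank at every $\lambda_0\in\overline{\FF}$ transfer from (i) to (ii), and Theorem \ref{thm:minimal_basis} finishes the argument.

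There is really no hard step here; the only care required is the bookkeeping that confirms the highest row degree coefficient matrix of $M(\lambda)\otimes I_p$ is $M_h\otimes I_p$ (equivalently, that tensoring with $I_p$ does not create rows whose leading terms cancel). Given this, everything reduces to standard rank properties of Kronecker products.
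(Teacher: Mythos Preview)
Your proof is correct and follows essentially the same approach as the paper: both verify the conditions of Theorem~\ref{thm:minimal_basis} directly, with the paper invoking \cite[Corollary~2.4]{FPJP} for part~(ii) where you instead spell out the Kronecker-product argument by hand. One minor slip: the leftmost $k\times k$ submatrix of $L_k(\lambda_0^\ell)$ is not $-I_k$ but upper triangular with $-1$'s on the diagonal and $\lambda_0^\ell$ on the superdiagonal---still nonsingular, so your conclusion stands.
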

\begin{proof}
Theorem \ref{thm:minimal_basis} guarantees that $L_k(\lambda^\ell)$ and $\Lambda_k(\lambda^\ell)^T$ are minimal bases for any $\ell\in\mathbb{N}$. 
In addition, from $L_k(\lambda^\ell)\Lambda_k(\lambda^\ell)=0$, we conclude that the matrix polynomials $L_k(\lambda^\ell)$ and $\Lambda_k(\lambda^\ell)^T$ are dual minimal bases.
Therefore, part (a) is true.
Part (b) follows from part (a) and \cite[Corollary 2.4]{FPJP}, together with some basic properties of the Kronecker product.
\end{proof}

Notice that the matrix polynomials $L_k(\lambda^\ell)\otimes I_p$ and $\Lambda_k(\lambda^\ell)^T\otimes I_p$ are dual minimal bases with constant row degrees (equal to $\ell$
in the case of $L_k(\lambda^\ell)\otimes I_p$, and equal to $\ell k$ in the case of $\Lambda_k(\lambda^\ell)^T\otimes I_p$). 
For pairs of dual minimal bases with this property, the following result will be useful.
\begin{theorem}{\rm \cite[Theorem 2.7]{FPJP}} \label{thm:minbasesdegreeseq}
The following statements hold.
\begin{enumerate}
\item[\rm (a)] Let $K(\lambda)$ be a minimal basis whose row degrees are all equal to $j$. 
Then $\rev_j K(\lambda)$ is also a minimal basis whose row degrees are all equal to $j$.
\item[\rm (b)] Let $K(\lambda)$ and $N(\lambda)$ be dual minimal bases. 
If the row degrees of $K(\lambda)$ are all equal to $j$ and the row degrees of $N(\lambda)$ are all equal to $\ell$, then $\rev_j K(\lambda)$ and $\rev_\ell N(\lambda)$ are also dual minimal bases.
\end{enumerate}
\end{theorem}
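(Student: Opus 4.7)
My plan is to reduce both parts to the characterization of minimal bases provided by Theorem \ref{thm:minimal_basis}, which says that a matrix polynomial is a minimal basis if and only if it is row reduced and has full row rank at every point of $\overline{\mathbb{F}}$. Since the uniform row degree hypothesis makes the interaction between $\rev$ and the row structure very clean, once part (a) is in hand, part (b) should follow from a short algebraic identity.

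For part (a), write the rows of $K(\lambda)$ as $k_i(\lambda)$ of degree $j$ each. First I would check that $\rev_j K(\lambda)$ still has all row degrees equal to $j$. This is precisely the statement that the constant term of each row $k_i(\lambda)$ is nonzero; but the rows of the constant matrix $K(0)$ are exactly those constant terms, and $K(0)$ must have full row rank because $K(\lambda)$ is a minimal basis, so in particular no row of $K(0)$ is zero. Next, the highest row degree coefficient matrix of $\rev_j K(\lambda)$ equals $K(0)$ itself, which has full row rank, so $\rev_j K(\lambda)$ is row reduced. Finally, I would verify the full-row-rank-at-every-$\lambda_0$ condition for $\rev_j K(\lambda)$: for $\lambda_0\neq 0$, the identity $\rev_j K(\lambda_0)=\lambda_0^j K(\lambda_0^{-1})$ transfers full row rank from $K$; for $\lambda_0=0$, one has $\rev_j K(0)=K_j$, the leading coefficient matrix of $K(\lambda)$, which is precisely the highest row degree coefficient matrix of $K(\lambda)$ and therefore has full row rank by the row reducedness of $K(\lambda)$. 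By Theorem \ref{thm:minimal_basis}, $\rev_j K(\lambda)$ is a minimal basis.

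For part (b), part (a) applied to $K$ and to $N$ already gives that $\rev_j K(\lambda)$ and $\rev_\ell N(\lambda)$ are minimal bases with unchanged sizes, so the matrix-dimension condition $m_1+m_2=n$ from Definition \ref{def:dualminimalbases} is inherited. All that remains is the dual relation, which I would get from the simple calculation
\[
\rev_j K(\lambda)\,\rev_\ell N(\lambda)^T \;=\; \lambda^{j}K(\lambda^{-1})\,\lambda^{\ell}N(\lambda^{-1})^T \;=\; \lambda^{j+\ell}\bigl(K(\lambda^{-1})N(\lambda^{-1})^T\bigr)\;=\;0,
\]
using that $K(\mu)N(\mu)^T=0$ identically in $\mu$.

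The only real obstacle is the degree-preservation step in part (a): a priori, reversing could drop row degrees and break the uniform-degree assumption used in the computation of part (b). The payoff of working the argument through Theorem \ref{thm:minimal_basis} is precisely that the minimal-basis hypothesis on $K(\lambda)$ forces $K(0)$ to have full row rank, which simultaneously rules out any degree drop and hands us the row reducedness of the reversed polynomial for free. Everything else is bookkeeping with $\rev_d P(\lambda)=\lambda^d P(\lambda^{-1})$.
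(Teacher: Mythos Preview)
Your argument is correct. Note, however, that the paper does not actually prove this statement: it is quoted verbatim from \cite[Theorem~2.7]{FPJP} and used as a black box, so there is no in-paper proof to compare against. Your route via Theorem~\ref{thm:minimal_basis} is the natural one and matches how such results are typically established: the uniform-row-degree hypothesis makes the highest row degree coefficient of $K(\lambda)$ coincide with its leading coefficient $K_j$ and that of $\rev_j K(\lambda)$ coincide with $K(0)$, so the ``row reduced'' and ``full row rank at every $\lambda_0$'' conditions for $K(\lambda)$ and $\rev_j K(\lambda)$ are exactly swapped at the two endpoints $\lambda_0=0$ and $\lambda_0=\infty$, while the identity $\rev_j K(\lambda_0)=\lambda_0^{\,j}K(\lambda_0^{-1})$ handles all finite nonzero $\lambda_0$. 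Your observation that full row rank of $K(0)$ precludes any degree drop in $\rev_j K(\lambda)$ is exactly the point that makes the argument go through, and the duality in part~(b) then follows from the one-line identity $\rev_j K(\lambda)\,\rev_\ell N(\lambda)^T=\rev_{j+\ell}\bigl(K(\lambda)N(\lambda)^T\bigr)=0$.
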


It is well-known that a matrix polynomial $Q(\lambda)$ having full row rank for all $\lambda_0\in\overline{\mathbb{F}}$ can be completed into a unimodular matrix polynomial (see \cite{Kailath} or \cite{unimodular} for efficient algorithms for computing such completions).
Furthermore, this result can be extended to the following theorem.
\begin{theorem}{\rm \cite[Theorem 2.10]{FPJP}}\label{thm:Embedding}
Let $K(\lambda)\in \mathbb{F}[\lambda]^{m_1 \times n}$ and $N(\lambda)\in \mathbb{F}[\lambda]^{m_2 \times n}$ be matrix polynomials such that $m_1 + m_2 = n$, $K(\lambda_0)$ and $N(\lambda_0)$ have both full row rank for all $\lambda_0\in \overline{\mathbb{F}}$, and $K(\lambda) N(\lambda)^T = 0$.
 Then, there exists a unimodular matrix polynomial $U(\lambda)\in \mathbb{F}[\lambda]^{n \times n}$ such that
\[
U(\lambda) =
\begin{bmatrix}
K(\lambda) \\ \widehat{K}(\lambda)
\end{bmatrix}
\quad \mbox{and} \quad
U(\lambda)^{-1}=
\begin{bmatrix}
\widehat{N}(\lambda)^T & N(\lambda)^T
\end{bmatrix}.
\]
\end{theorem}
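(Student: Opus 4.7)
The plan is to first complete $K(\lambda)$ to a unimodular matrix polynomial and then modify this completion so that $N(\lambda)^T$ appears as the right block of the inverse. By the classical completion result recalled just before the theorem statement, since $K(\lambda)$ has full row rank at every $\lambda_0 \in \overline{\mathbb{F}}$, there exists $\widetilde{K}(\lambda) \in \mathbb{F}[\lambda]^{m_2 \times n}$ such that
\[ U_0(\lambda) := \begin{bmatrix} K(\lambda) \\ \widetilde{K}(\lambda) \end{bmatrix} \]
is unimodular. I would then partition its inverse conformably as $U_0(\lambda)^{-1} = [\widehat{N}(\lambda)^T,\; B(\lambda)]$ with $\widehat{N}(\lambda)^T \in \mathbb{F}[\lambda]^{n \times m_1}$ and $B(\lambda) \in \mathbb{F}[\lambda]^{n \times m_2}$.

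The second step is to identify $B(\lambda)$ as a polynomial basis of the right nullspace of $K(\lambda)$. From $U_0(\lambda) U_0(\lambda)^{-1} = I_n$ we read off $K(\lambda) B(\lambda) = 0$, and for any $v(\lambda) \in \mathbb{F}[\lambda]^n$ satisfying $K(\lambda) v(\lambda) = 0$, the identity $v = U_0^{-1}(U_0 v) = B \,(\widetilde{K} v)$ shows that every such $v$ lies in the polynomial column span of $B(\lambda)$. Applying this observation column by column to $N(\lambda)^T$, which satisfies $K N^T = 0$ by hypothesis, yields a polynomial matrix $M(\lambda) := \widetilde{K}(\lambda) N(\lambda)^T \in \mathbb{F}[\lambda]^{m_2 \times m_2}$ such that $N(\lambda)^T = B(\lambda) M(\lambda)$.

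The crucial step, which I expect to be the main obstacle, is to prove that $M(\lambda)$ is unimodular. The argument I would use is pointwise: for each $\lambda_0 \in \overline{\mathbb{F}}$ the constant matrix $B(\lambda_0)$ has full column rank, since it consists of the last $m_2$ columns of the invertible constant matrix $U_0(\lambda_0)^{-1}$, and $N(\lambda_0)^T$ also has full column rank by the hypothesis on $N(\lambda)$. The equality $N(\lambda_0)^T = B(\lambda_0) M(\lambda_0)$ combined with the injectivity of $B(\lambda_0)$ as a linear map then forces $M(\lambda_0)$ to have full rank, hence to be invertible, for every $\lambda_0 \in \overline{\mathbb{F}}$. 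Consequently $\det M(\lambda)$ is a polynomial without roots in $\overline{\mathbb{F}}$, so it is a nonzero constant, and $M(\lambda)$ is unimodular.

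With $M(\lambda)$ unimodular, I would finish by setting
\[ U(\lambda) := \begin{bmatrix} I_{m_1} & 0 \\ 0 & M(\lambda)^{-1} \end{bmatrix} U_0(\lambda) = \begin{bmatrix} K(\lambda) \\ M(\lambda)^{-1} \widetilde{K}(\lambda) \end{bmatrix}, \qquad \widehat{K}(\lambda) := M(\lambda)^{-1} \widetilde{K}(\lambda). \]
Then $\widehat{K}(\lambda)$ is polynomial, $U(\lambda)$ is unimodular as a product of unimodular matrices, and
\[ U(\lambda)^{-1} = U_0(\lambda)^{-1} \begin{bmatrix} I_{m_1} & 0 \\ 0 & M(\lambda) \end{bmatrix} = \bigl[\, \widehat{N}(\lambda)^T,\; B(\lambda) M(\lambda) \,\bigr] = \bigl[\, \widehat{N}(\lambda)^T,\; N(\lambda)^T \,\bigr], \]
which is exactly the claimed block structure.
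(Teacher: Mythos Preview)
Your proof is correct. Each step is sound: the unimodular completion of $K(\lambda)$ exists by the classical result cited just before the theorem; the identification $N(\lambda)^T = B(\lambda)M(\lambda)$ with $M(\lambda) = \widetilde{K}(\lambda)N(\lambda)^T$ follows from $KN^T = 0$ and the decomposition $v = B(\widetilde{K}v)$ for nullspace vectors; and the unimodularity of $M(\lambda)$ follows from the rank inequality $m_2 = \mathrm{rank}\,N(\lambda_0)^T = \mathrm{rank}\bigl(B(\lambda_0)M(\lambda_0)\bigr) \le \mathrm{rank}\,M(\lambda_0) \le m_2$ at every $\lambda_0 \in \overline{\mathbb{F}}$. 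The final adjustment $U(\lambda) = \mathrm{diag}(I_{m_1}, M(\lambda)^{-1})\,U_0(\lambda)$ then delivers exactly the required block structure for both $U(\lambda)$ and $U(\lambda)^{-1}$.

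As for comparison: the paper does not actually prove this theorem. It is quoted verbatim as \cite[Theorem 2.10]{FPJP} and used as a black box throughout (notably in the proof of Theorem~\ref{thm:ell-ification} and in Section~\ref{sec:recovery}). So there is no ``paper's own proof'' to compare against here; your argument supplies a clean self-contained proof where the paper simply invokes the external reference.
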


As a consequence of Theorem \ref{thm:minimal_basis}, Theorem \ref{thm:Embedding} can be applied to any pair of dual minimal bases.
In the case of the dual minimal bases  $L_k(\lambda^\ell)\otimes I_p$ and $\Lambda_k(\lambda^\ell)^T\otimes I_p$, this embedding into unimodular matrix polynomials is particularly simple, as we show in the following example.
\begin{example}\label{ex:unimodular}
Let $L_k(\lambda)$ and $\Lambda_k(\lambda)$ be the matrix polynomials introduced in \eqref{eq:Lk} and \eqref{eq:Lambda}.
If $e_{k+1}$ is the last column of the $(k+1)\times (k+1)$ identity matrix, then the matrix polynomial
\[
V_k(\lambda) = \begin{bmatrix}
L_k(\lambda^\ell) \\  e_{k+1}^T
\end{bmatrix}
\]
is unimodular, and its inverse is given by
\[
V_k(\lambda)^{-1}=
\left[ \begin{array}{ccccc|c}
-1 & -\lambda^\ell & -\lambda^{2\ell}& \cdots & -\lambda^{(k-1)\ell} & \lambda^{k\ell } \\
& -1 & -\lambda^\ell & \ddots & \vdots & \lambda^{(k-1)\ell}\\
& & -1 & \ddots & -\lambda^{2\ell} & \vdots\\
& & & \ddots & -\lambda^{\ell} & \lambda^{2\ell}\\
& & & & -1 & \lambda^\ell \\
& & & & & 1
\end{array}\right].
\]
Notice that the last column of $V_k(\lambda)^{-1}$ is $\Lambda_k(\lambda^\ell)$.
Hence, the matrix $V_k(\lambda)$ is a particular instance of the matrix $U(\lambda)$ in Theorem \ref{thm:Embedding}.
Furthermore, the matrix $V_k(\lambda)\otimes I_p$ is a particular instance of the embedding $U(\lambda)$ for the dual minimal bases $L_k(\lambda^\ell)\otimes I_p$ and $\Lambda_k(\lambda^\ell)^T\otimes I_p$.
\end{example}

Finally, we review the concepts of minimal bases and minimal indices of singular matrix polynomials.
\begin{definition}\label{def_singular}
If a matrix polynomial $P(\lambda)\in\mathbb{F}[\lambda]^{m\times n}$ is singular, then it has non-trivial left and/or right rational null spaces:
\begin{equation} \label{eq:nullspaces}
\begin{split}
\mathcal{N}_\ell(P) & := \{y(\lambda)^T\in\mathbb{F}(\lambda)^{1\times m} \quad \mbox{such that} \quad y(\lambda)^TP(\lambda) = 0\},\\
\mathcal{N}_r(P) & := \{x(\lambda)\in\mathbb{F}(\lambda)^{n \times 1} \quad \mbox{such that} \quad P(\lambda)x(\lambda) = 0\},
\end{split}
\end{equation}
which are particular instances of rational subspaces.
Then, the \emph{left (resp. right) minimal indices and bases of a matrix polynomial} $P(\lambda)$ are defined as those of the rational subspace $\mathcal{N}_\ell(P)$ (resp. $\mathcal{N}_r(P)$).
\end{definition}
\begin{remark}
Given a pair of dual minimal bases $L(\lambda)$ and $N(\lambda)$, notice that the rows of $N(\lambda)$ are a minimal basis for the subspace $\mathcal{N}_r(L)$ and the row degrees of $N(\lambda)$ are the right minimal indices of $L(\lambda)$, and vice versa.
In other words, each $L(\lambda)$ and $N(\lambda)$ provides
a minimal basis for the right nullspace of the other.
\end{remark}

\section{Block minimal bases  matrix polynomials}\label{sec:min_ell-ifications}

We start by introducing the family of \emph{(strong) block minimal bases matrix polynomials} in Definition \ref{def:block_dual_bases_ell}, which is the most important concept introduced in this work.
\begin{definition} \label{def:block_dual_bases_ell} A matrix polynomial
\begin{equation} \label{eq:minbas_ell}
\mathcal{L}(\lambda) = \begin{bmatrix} M(\lambda) & K_2 (\lambda)^T \\ K_1 (\lambda) & 0\end{bmatrix},
\end{equation}
where $M(\lambda)$ is an arbitrary grade-$\ell$ matrix polynomial, is called a {\em block minimal bases degree-$\ell$ matrix polynomial} if $K_1 (\lambda)$ and $K_2(\lambda)$ are both minimal bases of degree $\ell$.
If, in addition, the row degrees of $K_1 (\lambda)$ are all equal to $\ell$, the row degrees of $K_2 (\lambda)$ are all equal to $\ell$, the row degrees of a minimal basis dual to $K_1 (\lambda)$ are all equal, and the row degrees of a minimal basis dual to $K_2 (\lambda)$ are all equal, then $\mathcal{L}(\lambda)$ is called a {\em strong block minimal bases degree-$\ell$ matrix polynomial}.
\end{definition}
\begin{remark}
We allow in Definition \ref{def:block_dual_bases_ell} the border cases
\[
\left[\begin{array}{c|c}
M(\lambda) & K(\lambda)^T 
\end{array}\right] \quad \mbox{or} \quad 
\left[\begin{array}{c}
M(\lambda) \\ \hline
K(\lambda)
\end{array}\right],
\]
where $K(\lambda)$ is a minimal basis of degree $\ell$.
In those cases, the corresponding (strong) block minimal bases  matrix polynomial are said to be \emph{degenerate}.
\end{remark}

Theorem \ref{thm:ell-ification} shows that any (strong) block minimal bases  matrix polynomial is a (strong) $\ell$-ification of a certain matrix polynomial.
\begin{theorem}\label{thm:ell-ification}
Let $K_1 (\lambda)$ and $N_1 (\lambda)$ be a pair of dual minimal bases, and let $K_2 (\lambda)$ and $N_2 (\lambda)$ be another pair of dual minimal bases. 
Consider the matrix polynomial
\begin{equation} \label{eq:Qpolinminbaslin}
Q(\lambda) := N_2(\lambda) M(\lambda) N_1(\lambda)^T,
\end{equation}
and the block minimal bases degree-$\ell$ matrix polynomial $\mathcal{L}(\lambda)$ in \eqref{eq:minbas_ell}. 
Then:
\begin{enumerate}
\item[\rm (a)] $\mathcal{L}(\lambda)$ is an $\ell$-ification of $Q(\lambda)$.
\item[\rm (b)] If $\mathcal{L}(\lambda)$ is a strong block minimal bases degree-$\ell$ matrix polynomial, then $\mathcal{L}(\lambda)$ is a strong $\ell$-ification of $Q(\lambda)$, when $Q(\lambda)$ is considered as a polynomial with grade $\ell + \deg(N_1 (\lambda)) + \deg(N_2 (\lambda))$.
\end{enumerate}
\end{theorem}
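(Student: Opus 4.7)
The plan is to prove part (a) by transforming $\mathcal{L}(\lambda)$ under unimodular equivalence into the anti-triangular shape required by Lemma \ref{lemma:antitriglin}, with $Q(\lambda)$ sitting in the central block; part (b) will then follow by applying part (a) to the reversal $\rev_\ell \mathcal{L}(\lambda)$, which the strong hypotheses guarantee is again a block minimal bases degree-$\ell$ matrix polynomial.

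For part (a), I apply Theorem \ref{thm:Embedding} to the dual pair $(K_1, N_1)$ to obtain a unimodular $U_1(\lambda) = \left[\begin{smallmatrix} K_1(\lambda) \\ \widehat{K}_1(\lambda) \end{smallmatrix}\right]$ with $U_1(\lambda)^{-1} = [\widehat{N}_1(\lambda)^T,\, N_1(\lambda)^T]$, so in particular $K_1 \widehat{N}_1^T = I$ and $K_1 N_1^T = 0$. Right-multiplying $\mathcal{L}(\lambda)$ by $\operatorname{diag}(U_1(\lambda)^{-1}, I)$ replaces $K_1$ by $[I,\, 0]$ and splits the top block row into $[M\widehat{N}_1^T,\, MN_1^T,\, K_2^T]$. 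The analogous identity that reduces $K_2(\lambda)^T$ to an identity column-block is $\widehat{N}_2 K_2^T = I$ and $N_2 K_2^T = 0$, obtained by transposing the relations defining the analogous unimodular $U_2$; concretely, $U_2(\lambda)^{-T} = \left[\begin{smallmatrix}\widehat{N}_2(\lambda) \\ N_2(\lambda)\end{smallmatrix}\right]$ is unimodular (as the transpose of the unimodular $U_2^{-1}$) with inverse $[K_2(\lambda)^T,\, \widehat{K}_2(\lambda)^T]$. Left-multiplying the intermediate matrix by $\operatorname{diag}(U_2(\lambda)^{-T}, I)$ then reduces $K_2^T$ to $\left[\begin{smallmatrix} I \\ 0 \end{smallmatrix}\right]$ and deposits exactly $N_2(\lambda) M(\lambda) N_1(\lambda)^T = Q(\lambda)$ in the central block, yielding the form of \eqref{eq:lintriagrelation}. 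Lemma \ref{lemma:antitriglin} then closes part (a). The subtle point is recognizing that one must use the ``opposite'' unimodular $U_2^{-T}$ on the left rather than $U_2$ itself: it is the relation $\widehat{N}_2 K_2^T = I$, and not anything involving $K_2 K_2^T$, that both produces the needed identity block and simultaneously puts $Q(\lambda) = N_2 M N_1^T$ in precisely the right position.

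For part (b), the strong hypotheses give constants $\epsilon_1, \epsilon_2$ with $\deg N_i(\lambda) = \epsilon_i$, and Theorem \ref{thm:minbasesdegreeseq}(b) ensures that $\rev_\ell K_i(\lambda)$ and $\rev_{\epsilon_i} N_i(\lambda)$ remain dual minimal bases for $i=1,2$. Because every row of $K_1$ and $K_2$ has degree exactly $\ell$ and $M(\lambda)$ has grade $\ell$, a direct computation gives
\[
\rev_\ell \mathcal{L}(\lambda) = \begin{bmatrix} \rev_\ell M(\lambda) & (\rev_\ell K_2(\lambda))^T \\ \rev_\ell K_1(\lambda) & 0 \end{bmatrix},
\]
which is again a block minimal bases degree-$\ell$ matrix polynomial. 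Applying part (a) to it shows that $\rev_\ell \mathcal{L}(\lambda)$ is an $\ell$-ification of
\[
\rev_{\epsilon_2} N_2(\lambda)\cdot \rev_\ell M(\lambda) \cdot (\rev_{\epsilon_1} N_1(\lambda))^T = \rev_{\ell+\epsilon_1+\epsilon_2} Q(\lambda) = \rev_D Q(\lambda),
\]
with $D = \ell + \deg N_1(\lambda) + \deg N_2(\lambda)$. Hence $\mathcal{L}(\lambda)$ is a strong $\ell$-ification of $Q(\lambda)$ when the latter is considered as a polynomial of grade $D$, which is exactly the claim of (b).
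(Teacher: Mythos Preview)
Your proof is correct and follows essentially the same approach as the paper: you use Theorem~\ref{thm:Embedding} to embed both dual pairs into unimodular matrices, then multiply $\mathcal{L}(\lambda)$ by $U_1(\lambda)^{-1}\oplus I$ on the right and $U_2(\lambda)^{-T}\oplus I$ on the left to obtain the anti-triangular form required by Lemma~\ref{lemma:antitriglin}, and for part (b) you apply part (a) to $\rev_\ell\mathcal{L}(\lambda)$ after invoking Theorem~\ref{thm:minbasesdegreeseq}(b). This is exactly the paper's argument, down to the choice of unimodular transformations and the grade computation $\ell+\epsilon_1+\epsilon_2$.
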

\begin{proof}
\noindent {\bf Proof of part (a)}: By Theorem \ref{thm:Embedding}, there exist  unimodular matrix polynomials such that, for $i=1,2$, 
\begin{equation} \label{eq:twounimodembed}
U_i(\lambda) =
\begin{bmatrix}
K_i(\lambda) \\ \widehat{K}_i(\lambda)
\end{bmatrix}
\quad \mbox{and} \quad
U_i(\lambda)^{-1}=
\begin{bmatrix}
\widehat{N}_i(\lambda)^T & N_i(\lambda)^T
\end{bmatrix}.
\end{equation}
If $m_i$ denotes the number of rows of $K_i(\lambda)$, for $i=1,2$, notice that \eqref{eq:twounimodembed} implies $K_i(\lambda) \widehat{N}_i(\lambda)^T = I_{m_i}$ and $K_i(\lambda)  N_i(\lambda)^T = 0$, as this will be important in the argument.
Then, let us consider the unimodular matrices $U_2(\lambda)^{-T} \oplus I_{m_1}$ and $U_1(\lambda)^{-1} \oplus I_{m_2}$.
By a direct matrix multiplication, we obtain
\begin{align}
& (U_2(\lambda)^{-T} \oplus I_{m_1}) \, \mathcal{L}(\lambda) \, (U_1(\lambda)^{-1} \oplus I_{m_2})  \nonumber \\
& \phantom{aaaaaaaaaa} =
\begin{bmatrix}
\widehat{N}_2(\lambda) & 0 \\ N_2(\lambda) & 0 \\0 & I_{m_1}
\end{bmatrix} \,
\begin{bmatrix} M(\lambda) & K_2 (\lambda)^T \\ K_1 (\lambda) & 0\end{bmatrix} \,
\begin{bmatrix}
\widehat{N}_1(\lambda)^T & N_1(\lambda)^T & 0 \nonumber \\
0 & 0 & I_{m_2}
\end{bmatrix} \\
&\phantom{aaaaaaaaaa} =  \begin{bmatrix}
Z(\lambda) & X(\lambda) & I_{m_2} \\
Y(\lambda) & Q(\lambda) & 0 \\
I_{m_1} & 0 & 0
\end{bmatrix}, \label{eq:XYZminlin}
\end{align}
where the matrix polynomials $ X(\lambda), Y(\lambda)$, and  $Z(\lambda)$ are not relevant in this proof. 
Finally, from \eqref{eq:XYZminlin},  Lemma \ref{lemma:antitriglin} proves that $\mathcal{L}(\lambda)$ is an $\ell$-ification of $Q(\lambda)$.

\smallskip

\noindent {\bf Proof of part (b)}: Set $\ell_1:= \deg(N_1 (\lambda))$ and $\ell_2 := \deg(N_2 (\lambda))$. 
Part (b) in Theorem \ref{thm:minbasesdegreeseq}  guarantees that $\rev_\ell K_i(\lambda)$ and $\rev_{\ell_i} N_i(\lambda)$ are dual minimal bases, for $i=1,2$. 
Therefore, the matrix polynomial 
\[
\rev_\ell \mathcal{L}(\lambda) =\left[\begin{matrix}\rev_\ell M(\lambda) & \rev_\ell K_2 (\lambda)^T \\ \rev_\ell K_1 (\lambda) & 0\end{matrix}\right]
\]
 is also a block minimal bases degree-$\ell$ matrix polynomial.
Thus, part (a)  implies that $\rev_\ell \mathcal{L}(\lambda)$ is an $\ell$-ification of
\begin{align*}
(\rev_{\ell_2} N_2(\lambda)) \, (\rev_\ell M(\lambda)) \, (\rev_{\ell_1} N_1(\lambda))^T
& =
\lambda^{\ell_2} N_2 \left( \lambda^{-1} \right) \, \lambda^\ell  \, M \left( \lambda^{-1} \right) \, \lambda^{\ell_1} N_1 \left( \lambda^{-1} \right)^T \\
&=\lambda^{\ell+\ell_1 + \ell_2} Q(\lambda^{-1}) = \rev_{\ell+\ell_1 + \ell_2} Q(\lambda).
\end{align*}
This proves part (b).
\end{proof}

\begin{remark}\label{remark:sizes}
Throughout the rest of the paper,  the sizes of $K_1(\lambda)$ and $K_2(\lambda)$ in Definition \ref{def:block_dual_bases_ell} are denoted without loss of generality by $m_1\times (n+m_1)$ and $m_2\times(m+m_2)$,  respectively.
In other words, we have
\[
\begin{blockarray}{ccc} 
 \begin{block}{[cc]c}
 M(\lambda) & K_2(\lambda)^T  & m+m_2 \\  
 K_1(\lambda) & 0 & m_1 \\ 
\end{block}
  n+m_1 & m_2 & \\  
\end{blockarray}.
\]
With this convention, notice that the sizes of $N_1(\lambda)$ and $N_2(\lambda)$ in Theorem \ref{thm:ell-ification} are $n\times(m_1+n)$ and $m\times (m_2+m)$, respectively, and, thus, $Q(\lambda)$ in \eqref{eq:Qpolinminbaslin} is an  $m\times n$ matrix polynomial.
\end{remark}

 Theorem \ref{thm:ell-ification} shows that every strong block minimal bases matrix polynomial is always a strong $\ell$-ification of a certain matrix polynomial.
In Section \ref{sec:constructing}, we address the inverse problem, that is,  we show how to construct strong $\ell$-ifications for a given matrix polynomial $P(\lambda)$ from strong block minimal bases   matrix polynomials.
But before addressing this important problem, we show in the following section how  previous works on linearizations, quadratifications, and $\ell$-ifications are related with the block minimal bases framework introduced in this section.

\subsection{Previous works related with the block minimal bases matrix polynomials framework}\label{sec:example}

Most of the linearizations, quadratifications and, in general, $\ell$-ifications introduced in previous works fit in the framework of block minimal bases matrix polynomials (modulo some simple operations).
We review some important examples in this section.

\begin{itemize}
\item[\rm(i)] {\bf The Frobenius companion linearizations.}
Let $P(\lambda)=\sum_{i=0}^d P_i\lambda^i\in\mathbb{F}[\lambda]^{m\times n}$.
The most well-known strong linearizations of $P(\lambda)$ are the so called Frobenius companion linearizations \cite{IndexSum,Lancaster_book}
\[
C_1(\lambda) = \left[\begin{array}{cccc}
\lambda P_d+P_{d-1} & P_{d-2} & \cdots &  P_0 \\ \hline
-I_n & \lambda I_n \\
& \ddots & \ddots \\
& & -I_n & \lambda I_n \\
\end{array}\right] = 
\left[ \begin{array}{c} M_1(\lambda) \\ \hline L_{d-1}(\lambda)\otimes I_n \end{array}\right]
\]
and
\[
C_2(\lambda) = \left[\begin{array}{c|ccc}
\lambda P_d+P_{d-1} & -I_m \\
P_{d-2} & \lambda I_m & \ddots  \\
\vdots & & \ddots & -I_m \\
P_0 & &   & \lambda I_m
\end{array}\right]= 
\left[ \begin{array}{c|c} M_2(\lambda) & L_{d-1}(\lambda)^T\otimes I_m\end{array}\right],
\]
where the matrix polynomial $L_k(\lambda)$ has been defined in \eqref{eq:Lk}.
The Frobenius companion forms are degenerate strong block minimal bases pencils.
Moreover, from Theorem \ref{thm:ell-ification} and Lemma \ref{lemma:L-Lamb}, they are strong linearizations of
\begin{align*}
\begin{bmatrix}
\lambda P_d+P_{d-1} & P_{d-2} & \cdots &  P_0
\end{bmatrix}&(\Lambda_{d-1}(\lambda)\otimes I_n)= \\
&(\Lambda_{d-1}(\lambda)^T\otimes I_m)
\begin{bmatrix}
\lambda P_d+P_{d-1}\\P_{d-2} \\ \vdots \\ P_0
\end{bmatrix}=P(\lambda),
\end{align*}
as it is well-known \cite[Theorems 5.3 and 5.4]{IndexSum}.

\item[\rm (ii)] {\bf (Strong) block minimal bases pencils}. 
The family of (strong) block minimal bases pencils introduced in \cite{FPJP} consists in (strong) block minimal bases degree-$\ell$ matrix polynomials with $\ell=1$.
Some important examples in this family are the block Kronecker pencils \cite{FPJP}, the Chebyshev pencils \cite{LP16}, the extended block Kronecker pencils \cite{unified}, the linearization for product bases in \cite{Leo}, and the pencils in block-Kronecker ansatz spaces \cite{Philip_Kronecker}.

\item[\rm(iii)]{\bf Fiedler and Fiedler-like pencils.} 
\emph{Fiedler pencils} were introduced in \cite{Fiedler03} for monic scalar polynomials ($m=n=1$), and then generalized to regular matrix polynomials in \cite{AV04}, to square singular matrix polynomials in \cite{DTDM10}, and to rectangular matrix polynomials in \cite{DTDM12}.
With the goal of constructing large families of structure-preserving linearizations, the families of \emph{generalized Fiedler pencils}, \emph{Fiedler pencils with repetition} and \emph{generalized Fiedler pencils with repetition} \cite{AV04,GFPR,ant-vol11} were introduced.
Very recently, it has been shown in \cite{unified,FPJP} that Fiedler pencils and generalized Fiedler pencils, and, under some generic nonsingularity conditions, Fiedler pencils with repetition and generalized Fiedler pencils with repetition are, modulo permutation, strong block minimal bases pencils.

\item[\rm(iv)] {\bf The standard basis of $\mathbb{DL}(P)$.}
Two vector spaces, denoted by $\mathbb{L}_1(P)$ and $\mathbb{L}_2(P)$ of potential linearizations were introduced in \cite{4m-vspace}.
The intersection of these vector spaces, denoted by $\mathbb{DL}(P)$, was shown to be a fertile source of structure-preserving linearizations \cite{GoodVibrations}.
Since the pencils in the standard basis of the vector space $\mathbb{DL}(P)$ consist  of Fiedler pencils with repetition \cite{FPR1,ant-vol11}, up to permutation and under some generic nonsingularity conditions, they are strong block minimal bases pencils.

\item[\rm(v)]{\bf Linearizations for degree-graded polynomial bases by Amiraslani, Corless and Lancaster.}
In \cite{ACL09}, the authors consider matrix polynomials of the form
\begin{equation}\label{eq_poly_dg}
P(\lambda)=\sum_{i=0}^d P_i\phi_i(\lambda)\in\mathbb{F}[\lambda]^{n\times n},
\end{equation}
where $\{\phi_i(\lambda)\}_{i=0}^\infty$ is a set of degree-graded polynomials satisfying a three-term recurrence relation
\[
\lambda \phi_i(\lambda)=\alpha_i\phi_{i+1}(\lambda)+\beta\phi_i(\lambda)+\gamma\phi_{i-1}(\lambda), \quad i=1,2,\hdots,
\]
where $\alpha_i,\beta_i,\gamma_i$ are real, $\phi_{-1}(\lambda)=0$, $\phi_0(\lambda)=1$, and, if $\kappa_i$ denotes the leading coefficient of $\phi_i(\lambda)$, $0\neq \alpha_i = \kappa_i/\kappa_{i-1}$.
A linearization for \eqref{eq_poly_dg} is given by $\lambda B_\phi-A_\phi$ with\footnote{the linearization in \cite{ACL09} is a permutation of the one presented here.}
\[
B_\phi=\begin{bmatrix}
\kappa_d P_d \\
& I_n \\
& & \ddots \\
& & & I_n
\end{bmatrix}
\]
and
\[
A_\phi=\begin{bmatrix}
-\kappa_{d-1}P_{d-1}+\kappa_d\beta_{d-1}P_d & \alpha_{d-2}I_n & & & \\
-\kappa_{d-1}P_{d-2}+\kappa_{d}\gamma_{d-1}P_d & \beta_{d-2}I_n & \alpha_{d-3}I_n \\
-\kappa_{d-1}P_{d-3} & \gamma_{d-2}I_n & \beta_{d-3}I_n & \ddots \\
\vdots & & \ddots & \ddots & \alpha_0I_n\\
-\kappa_{d-1} P_0 & & & \gamma_1 I_n & \beta_0I_n
\end{bmatrix}.
\]

The pencil $\lambda B_\phi-A_\phi$ is known as the \emph{colleague pencil} when $\{\phi_i(\lambda)\}_{i=0}^\infty$ is the set of Chebyshev polynomials, or as the {\emph comrade pencil} when $\{\phi_i(\lambda)\}_{i=0}^\infty$ is a set of orthogonal polynomials other than the Chebyshev polynomials.
We can write $\lambda B_\phi-A_\phi=\left[\begin{array}{c|c} M_\phi(\lambda) & K_\phi(\lambda)^T\otimes I_n \end{array}\right]$, where
\[
M_\phi(\lambda) = \left[\begin{array}{l}
\lambda\kappa_d P_d+\kappa_{d-1}P_{d-1}-\kappa_d\beta_{d-1}P_d \\
\kappa_{d-1}P_{d-2}-\kappa_{d}\gamma_{d-1}P_d \\
\kappa_{d-1}P_{d-3}\\ 
\hspace{0.5cm} \vdots \\
\kappa_{d-1}P_0
\end{array}\right]\in\mathbb{F}[\lambda]^{dn\times n}
\]
and
\[
K_\phi(\lambda)^T\otimes I_n = \begin{bmatrix}
-\alpha_{d-2} & & & \\
 \lambda-\beta_{d-2} & -\alpha_{d-3} \\
 -\gamma_{d-2} & \lambda-\beta_{d-3} & \ddots \\
 & \ddots & \ddots & -\alpha_0\\
 & & -\gamma_1  & \lambda-\beta_0
\end{bmatrix}\otimes I_n\in\mathbb{F}[\lambda]^{dn\times (d-1)n}.
\]
It is not difficult to show that $K_\phi(\lambda)\otimes I_n$ is a minimal basis  with a dual minimal basis given by 
\[
\Phi_d(\lambda)^T\otimes I_n :=
\begin{bmatrix}
\phi_{d-1}(\lambda) & \cdots & \phi_1(\lambda) & \phi_0(\lambda)
\end{bmatrix}\otimes I_n.
\]
Since $K_\phi(\lambda)\otimes I_n$ has all its row degrees equal to 1, and $\Phi_d(\lambda)^T\otimes I_n$ has all its row degrees equal to $d-1$, we conclude that the pencil $\lambda B_\phi-A_\phi$ is a degenerate strong block minimal bases pencil.
Furthermore, from Theorem \ref{thm:ell-ification}, it is a strong linearization of
\[
(\Phi_d(\lambda)^T\otimes I_n) M_\phi(\lambda) = \kappa_{d-1}P(\lambda),
\]
as it was also proved in \cite{ACL09}.

\item[\rm(vi)]{\bf The Frobenius-like companion $\ell$-ifications.}
The first known construction of strong $\ell$-ifications was presented in  \cite{IndexSum} for the case $\ell$ divides $d$.
These strong $\ell$-ifications where named \emph{Frobenius-like companion forms of degree $\ell$}, because of there resemblance to the first and second Frobenius companion linearizations.
Let $P(\lambda)=\sum_{i=0}^d P_i\lambda^i\in\mathbb{F}[\lambda]^{m\times n}$, and assume that $d=k\ell$, for some $k\in\mathbb{N}$. 
The Frobenius-like companion $\ell$-ifications are constructed as follows.
Based on the coefficients of $P(\lambda)$, let us introduce the following grade-$\ell$ matrix polynomials
\begin{align}\label{eq:B_poly}
\begin{split}
&B_1(\lambda) :=  P_\ell \lambda^\ell +P_{\ell-1}\lambda^{\ell-1}+\cdots +  P_1\lambda + P_0, \\
&B_j(\lambda) :=  P_{\ell j}\lambda^\ell+P_{\ell j-1}\lambda^{\ell-1}+\cdots +  P_{\ell(j-1)+1}\lambda,\quad \mbox{for }j=2,\hdots,k.
\end{split}
\end{align}
Then, the \emph{first and second Frobenius-like companion forms of grade $\ell$} associated with $P(\lambda)$ are, respectively,
\[
\left[\begin{array}{ccccc}
B_k(\lambda) & B_{k-1}(\lambda) & B_{k-2}(\lambda) & \cdots & B_1(\lambda) \\ \hline
-I_n & \lambda^\ell I_n \\
& -I_n & \lambda^\ell I_n \\
& & \ddots & \ddots \\
& & & -I_n & \lambda^\ell I_n
\end{array}\right]= 
\left[ \begin{array}{c} M_1^\ell(\lambda) \\ \hline \phantom{\Big{(}} L_{k-1}(\lambda^\ell)\otimes I_n \phantom{\Big{(}} \end{array}\right]
\]
and
\[
\left[\begin{array}{c|cccc}
B_k(\lambda) & -I_m \\
B_{k-1}(\lambda) & \lambda^\ell I_m & -I_m \\
B_{k-2}(\lambda) & & \lambda^\ell I_m & \ddots \\
\vdots & & & \ddots & -I_m \\
B_1(\lambda) & & & & \lambda^\ell I_m
\end{array}\right]= 
\left[ \begin{array}{c|c} M_2^\ell(\lambda) & L_{k-1}(\lambda^\ell)^T\otimes I_m\end{array}\right],
\]
where $L_k(\lambda)$ is the matrix polynomial in \eqref{eq:Lk}.
The above matrix polynomials are degenerate strong block minimal bases degree-$\ell$ matrix polynomials.
Moreover, from Theorem \ref{thm:ell-ification} and Lemma \ref{lemma:L-Lamb}, they are strong linearizations of
\begin{align*}
\begin{bmatrix}
B_k(\lambda) & B_{k-1}(\lambda) & \cdots &  B_{1}(\lambda)
\end{bmatrix}&(\Lambda_{k-1}(\lambda^\ell)\otimes I_n)= \\
&(\Lambda_{k-1}(\lambda^\ell)^T\otimes I_m)
\begin{bmatrix}
B_s(\lambda) \\ B_{k-1}(\lambda) \\ \vdots \\ B_1(\lambda)
\end{bmatrix}=P(\lambda),
\end{align*}
as it was also proved in \cite{IndexSum} using very different techniques.

\item[\rm(vii)]{\bf The $\ell$-ifications by De Ter\'an, Dopico and Van Dooren.} In \cite{FFP}, the authors provided for the first time an algorithm for constructing strong $\ell$-ifications of a given matrix polynomial $P(\lambda)\in\mathbb{F}[\lambda]^{m\times n}$ of grade $d$, when $\ell$ divides $dn$ or $dm$.
The constructed $\ell$-ifications are of the form 
\[
\left[\begin{array}{c}
\widehat{L}(\lambda) \\ \hline \phantom{\Big{(}} \widetilde{L}(\lambda) \phantom{\Big{(}}
\end{array}\right] \quad \mbox{or} \quad 
\left[\begin{array}{c|c} \widehat{L}(\lambda) & \widetilde{L}(\lambda)^T
\end{array}\right],
\]
where $\widetilde{L}(\lambda)\in\mathbb{F}[\lambda]^{\widehat{m}\times (\widehat{n}+n)}$ is a matrix polynomial of grade $\ell$, and $\widehat{L}(\lambda)\in\mathbb{F}[\lambda]^{\widehat{n}\times (\widehat{n}+n)}$ is a minimal basis with degree $\ell$.
Notice that
\[
\begin{bmatrix}
0 & I_{\widehat{n}} \\ I_{\widehat{m}} & 0 
\end{bmatrix}\left[\begin{array}{c}
\widehat{L}(\lambda) \\ \hline \phantom{\Big{(}} \widetilde{L}(\lambda) \phantom{\Big{(}}
\end{array}\right] =
\left[\begin{array}{c}
\widetilde{L}(\lambda) \\ \hline \phantom{\Big{(}} \widehat{L}(\lambda) \phantom{\Big{(}}
\end{array}\right]
\]
is a degenerate block minimal bases degree-$\ell$  matrix polynomial.
Thus, up to a simple block-permutation, the $\ell$-ifications in \cite{FFP} are block minimal bases matrix polynomials.

\item[\rm(viii)] {\bf The palindromic quadratifications by Huang, Lin, and Su.}
With the aim of devising a structure-preserving algorithm for palindromic matrix polynomials of even grade, in \cite{pal_quadratization}, the authors constructed palindromic strong quadratifications of palindromic matrix polynomials of even grade.
The form of these strong quadratifications depends on whether the grade of $P(\lambda)\in\mathbb{F}[\lambda]^{n\times n}$ is of the form $4s$ or $4s+2$, for some $s\in\mathbb{N}$.
For example, for $d=8$ the quadratification is given by
\[
Q_1(\lambda)=\begin{bmatrix}
\lambda^2 P_5+\lambda P_4+P_3 -\lambda(I_n+P_8P_0) & \lambda^2 P_8 & \lambda^2 P_7+\lambda P_6 & I_n \\
P_0 & -\lambda I_n & \lambda^2 I_n & 0 \\
\lambda P_2+P_1 & I_n & 0 & -\lambda^2 I_n \\
\lambda^2 I_n &0 & -I_n & 0
\end{bmatrix},
\]
and for $d=10$, it is given by
\[
Q_2(\lambda)=\begin{bmatrix}
\lambda^2P_6+\lambda P_5+P_4 & \lambda^2 P_{10}+\lambda P_9 & 0 & \lambda^2 P_8+\lambda P_7 & I_n \\
\lambda P_1+P_0 & 0 & -\lambda^2 I_n & 0 & 0 \\
0 & -I_n & 0 & \lambda^2 I_n & 0 \\
\lambda P_3+P_2 & 0 & I_n & 0 & -\lambda^2 I_n \\
\lambda^2 I_n & 0 & 0 & -I_n & 0
\end{bmatrix}.
\]
It is not difficult to show that there exist two permutation matrices, denoted by $\Pi_1$ and $\Pi_2$, such that $\Pi_1^TQ_1(\lambda)\Pi_1=$
\[
\left[\begin{array}{ccc|c}
\lambda^2 P_5+\lambda P_4+P_3 -\lambda(I_n+P_8P_0) & \lambda^2 P_7+\lambda P_6 & \lambda^2 P_8 & I_n \\
\lambda P_2+P_1 & 0 & I_n & -\lambda^2 I_n \\
P_0 & \lambda_2 I_n & -\lambda I_n & 0 \\ \hline
\phantom{\Big{(}} \lambda^2 I_n \phantom{\Big{(}} & -I_n & 0 & 0
\end{array}\right]
\]
and $\Pi_2^TQ_2(\lambda)\Pi_2=$
\[
\left[\begin{array}{ccc|cc}
\lambda^2P_6+\lambda P_5+P_4 & \lambda^2 P_8+\lambda P_7 & \lambda^2 P_{10}+\lambda P_9 & I_n & 0 \\
\lambda P_3+P_2 & 0 & 0 & -\lambda^2 I_n & I_n \\
\lambda P_1+P_0 & 0 & 0 & 0 & -\lambda^2 I_n \\ \hline
\phantom{\Big{(}} \lambda^2 I_n \phantom{\Big{(}} & -I_n & 0 & 0 & 0 \\
0 & \lambda^2 I_n & -I_n & 0  & 0
\end{array}\right],
\]
which are block minimal bases quadratic matrix polynomials.
Furthermore, it is easily checked that $Q_2(\lambda)$ is, in fact, a strong block minimal bases quadratic matrix polynomial.
These results are easily generalized for any even grade.
Hence, the  quadratifications introduced in \cite{pal_quadratization} are, up to permutations, block minimal bases quadratic matrix polynomials.
\end{itemize}

\medskip

The above list is just a sample of linearizations, quadratifications, and $\ell$-ifications given in order to show that a great part of the recent work on $\ell$-ifications (linearizations, quadratifications, etc) is included in the  block minimal bases matrix polynomials framework.
Many other constructions fit also in this framework \cite{ChebyFiedler,Leo,Meerbergen}.

\section{Constructing a strong block minimal bases $\ell$-ification for a given matrix polynomial}\label{sec:constructing}
We return to the problem left open at the end of Section \ref{sec:minimal_bases}.
We show how to construct  strong $\ell$-ifications of a prescribed matrix polynomial $P(\lambda)\in\mathbb{F}[\lambda]^{m\times n}$ of grade $d$ when $\ell$ divides $md$ or $nd$.
We focus on the case $\ell$ divides $md$.
The case $\ell$ divides $nd$ will be considered in Section \ref{sec:ell_divides_nell}.

From Theorem \ref{thm:ell-ification}, we obtain the following procedure for constructing  strong $\ell$-ifications of a given matrix polynomial $P(\lambda)\in\mathbb{F}[\lambda]^{m\times n}$ of grade $d$ from strong block minimal bases matrix polynomials.

\medskip

\begin{itemize}
\item[{\rm \bf Step 1}] Choose two pairs of dual minimal bases $K_1(\lambda)$ and $N_1(\lambda)$, and $K_2(\lambda)$ and $N_2(\lambda)$, with sizes as in Remark \ref{remark:sizes}, such that all the row degrees of $K_1(\lambda)$ and $K_2(\lambda)$ are equal to $\ell$, all the row degrees of $N_1(\lambda)$ are equal to $\epsilon$, and all the row degrees of $N_2(\lambda)$ are equal to $\eta$, with $\epsilon+\eta=d-\ell$.
\item[{\rm \bf Step 2}] Solve the matrix polynomial equation $N_2(\lambda)M(\lambda)N_1(\lambda)^T = P(\lambda)$ for $M(\lambda)$ with grade $\ell$.
\end{itemize}

\medskip

We consider, first, the problem of constructing the pairs of dual minimal bases $K_1(\lambda)$ and $N_1(\lambda)$, and $K_2(\lambda)$ and $N_2(\lambda)$  in {\bf Step 1}.
Then, we show that for each minimal bases $N_1(\lambda)$ and $N_2(\lambda)$ obtained from {\bf Step 1}, the polynomial equation in {\bf Step 2} has infinitely many solutions with grade $\ell$.
When $\deg(P(\lambda))=d$, all such solutions have degree  equal to $\ell$.

\subsection{Solving {\bf Step 1}}

There are some constraints on the degrees and sizes of the dual minimal bases in {\bf Step 1} that follow from Theorem \ref{thm:dual_sum}.
Indeed, we obtain from Theorem \ref{thm:dual_sum} that the two pairs of dual minimal bases in {\bf Step 1} exist if and only if the linear system
\begin{equation}\label{eq:system}
\begin{bmatrix}
\ell & 0 & -n & 0 \\
0 & \ell & 0 & -m \\
0 & 0 & 1 & 1 \\
\end{bmatrix}
\begin{bmatrix}
m_1\\ m_2\\ \epsilon\\ \eta
\end{bmatrix} =
\begin{bmatrix}
0 \\ 0 \\ d-\ell
\end{bmatrix}
\end{equation}
has at least one non-negative integer solution.
When $\ell<d$ and $\ell$ divides  $md$ this is always the case,  since $md = s\ell$, for some non-zero natural number $s$, implies that $m_2 = s-m$, $m_1=0$, $\epsilon =0$ and $\eta= d-\ell$ is a non-negative integer solution of \eqref{eq:system}.
Moreover, as we show in Proposition \ref{prop:all_solutions}, there may exist many more non-negative integer solutions of \eqref{eq:system} under the hypothesis $\ell<d$ and $\ell$ divides  $md$.
\begin{proposition}\label{prop:all_solutions}
Given natural numbers $\ell,d,n,m$  such that $d>\ell$ and $\ell$ divides $md$, set
\[
\gamma:=\frac{\ell}{\gcd\{\ell,n,m\}}.
\]
Then, the vectors
\begin{equation}\label{eq:solutions}
\begin{bmatrix}
\dfrac{kn\gamma}{\ell} \\
\dfrac{md}{\ell}-\frac{km\gamma}{\ell}-m \\
k\gamma \\
d-\ell -k\gamma
\end{bmatrix} \quad \mbox{with} \quad k=0,1,\hdots,\left\lfloor  (d-\ell)/\gamma \right\rfloor,
\end{equation}
are the non-negative integer solutions of \eqref{eq:system}, where $\lfloor \cdot \rfloor$ denotes the floor function.
\end{proposition}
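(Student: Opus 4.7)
The system in \eqref{eq:system} consists of three linear equations in four unknowns, so I expect a one-parameter family of rational solutions; the task is to identify precisely which values of the parameter give non-negative integer vectors. My plan is to eliminate three of the variables and reduce everything to a condition on a single parameter, naturally $\epsilon$.

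First, I would solve the system directly. The third equation gives $\eta = d-\ell - \epsilon$, the first gives $m_1 = n\epsilon/\ell$, and the second gives
\[
m_2 = \frac{m\eta}{\ell} = \frac{m(d-\ell-\epsilon)}{\ell} = \frac{md}{\ell} - m - \frac{m\epsilon}{\ell}.
\]
So every solution of the linear system over $\mathbb{Q}$ is parametrized by $\epsilon$. The non-negativity constraints $\eta \geq 0$ and $m_2 \geq 0$ are both equivalent (using $m \geq 1$) to $0 \leq \epsilon \leq d-\ell$, while $\epsilon \geq 0$ forces $m_1 \geq 0$ automatically.

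Next, I would extract the integrality conditions. Since $md/\ell \in \mathbb{Z}$ by hypothesis, the vector has integer entries if and only if both $\ell \mid n\epsilon$ and $\ell \mid m\epsilon$. This pair of divisibility conditions is equivalent to $\ell \mid \gcd(n\epsilon, m\epsilon) = \epsilon \cdot \gcd(n,m)$. Setting $g = \gcd(n,m)$ and $h = \gcd(\ell,g) = \gcd(\ell,n,m)$, the condition $\ell \mid \epsilon g$ is equivalent to $\ell/h \mid \epsilon (g/h)$; since $\gcd(\ell/h, g/h) = 1$, this is in turn equivalent to $\ell/h \mid \epsilon$, that is, $\gamma \mid \epsilon$. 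This is the main technical step: reducing two simultaneous divisibility conditions to the single condition $\gamma \mid \epsilon$, which I expect to require the most care.

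Finally, I would combine these observations. Writing $\epsilon = k\gamma$ with $k \in \mathbb{Z}_{\geq 0}$, the bound $\epsilon \leq d-\ell$ becomes $k \leq \lfloor (d-\ell)/\gamma \rfloor$. Substituting $\epsilon = k\gamma$ back into the formulas above yields exactly the vectors displayed in \eqref{eq:solutions}, since $n\gamma/\ell = n/h$ and $m\gamma/\ell = m/h$ are both positive integers. Conversely, every vector of the form \eqref{eq:solutions} satisfies \eqref{eq:system} by direct substitution and has non-negative integer entries because $n\gamma/\ell, m\gamma/\ell \in \mathbb{Z}$ and the bound on $k$ guarantees $m_2, \eta \geq 0$. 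This establishes both inclusions and completes the proof.
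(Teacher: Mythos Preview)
Your proof is correct and follows essentially the same route as the paper: parametrize all solutions of \eqref{eq:system} by $\epsilon$, restrict $\epsilon$ to $\{0,1,\ldots,d-\ell\}$ for non-negativity, and then characterize integrality by the condition that $\gamma\mid\epsilon$. In fact you supply more detail than the paper does at the key step, giving a clean argument via $\gcd$'s for why $\ell\mid n\epsilon$ and $\ell\mid m\epsilon$ together are equivalent to $\gamma\mid\epsilon$, whereas the paper simply asserts this equivalence.
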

\begin{proof}
The real solutions of \eqref{eq:system} are given by
\begin{equation}\label{eq:vector_aux}
\begin{bmatrix}
\dfrac{n\epsilon}{\ell}\\
\dfrac{md}{\ell}-\frac{m\epsilon}{\ell}-m\\
\epsilon \\
d-\ell-\epsilon
\end{bmatrix} \quad \mbox{with }\epsilon\in\mathbb{R}.
\end{equation}
Thus, the problem is reduced to find the values $\epsilon\in\{0,1,\hdots,d-\ell\}$ for which the vector \eqref{eq:vector_aux} has non-negative integer entries.
Since $\ell$ divides $md$ by assumption, this problem is equivalent to find those values of $\epsilon$ that make $n\epsilon/\ell$ and $m\epsilon/\ell$ non-negative integers.
To finish the proof, it suffices to notice that (1) both $n\epsilon/\ell$ and $m\epsilon/\ell$ are non-negative integers if and only if $\epsilon$ is a multiple of $\gamma$, and (2) the entries of the vectors are all non-negative because $k\leq (d-\ell)/\gamma$.
\end{proof}

Once  some non-negative values for $m_1$, $m_2$, $\epsilon$ and $\eta$ satisfying the linear system \eqref{eq:system} have been fixed,  Theorem \ref{thm:dual_sum} guarantees the existence of the two pairs of dual minimal bases in {\bf Step 1}.
In order to construct those pairs of dual minimal bases, one may consider, as we pointed out in Remark \ref{remark:zigzag}, the procedures in \cite[Theorems 5.1 and 6.1]{FFSP} or \cite[Theorem 5.3]{FFSP} based on zigzag matrices.


\subsection{Solving Step 2}
We now show that the equation
\begin{equation}\label{eq:Step2}
N_2(\lambda)M(\lambda)N_1(\lambda)^T=P(\lambda) 
\end{equation}
is always consistent (with infinitely many solutions) when $N_1(\lambda)$ and $N_2(\lambda)$ are any pair of minimal bases obtained from {\bf Step 1}. 
We assume that both $\epsilon$ and $\eta$ are nonzero, otherwise the consistency of \eqref{eq:Step2} follows from the results in \cite[Section 4.1]{FFP}.
We split {\bf Step 2} into two substeps:

\medskip

\begin{itemize}
\item[{\rm \bf Step 2.1}] Solve the equation $N_2(\lambda)B(\lambda) = P(\lambda)$ for $B(\lambda)$ with grade  $d-\eta$.
\item[{\rm \bf Step 2.2}] Solve the equation $M(\lambda)N_1(\lambda)^T = B(\lambda)$ for $M(\lambda)$ with grade $\ell$.
\end{itemize}

\smallskip

The consistency of the equations in {\bf Steps 2.1 and 2.2} follows from the fact that both $N_1(\lambda)$ and $N_2(\lambda)$  are minimal bases with constant row degrees whose right minimal indices are all equal to $\ell$.
This motivates Lemma \ref{lemma:solving-step2}, where {\em convolution matrices}\footnote{Convolution matrices are called Sylvester matrices in \cite{perturbation}.
More specifically, the convolution matrix $C_j(Q)$ is the Sylvester matrix $S_{j+1}(Q)$, $j=0,1,\hdots$} will be used.
 For any matrix polynomial $Q(\lambda) = \sum_{i=0}^q Q_i \lambda^i$ of grade $q$ and arbitrary size, we define the sequence of its {\em convolution matrices} as follows
\begin{equation} \label{eq:defconvolution}
C_j (Q) :=
\underbrace{\left[
\begin{array}{cccc}
Q_q \\
Q_{q-1} & Q_{q} \\
\vdots &  Q_{q-1} & \ddots \\
Q_0 & \vdots & \ddots & Q_{q} \\
    & Q_0    & \vdots & Q_{q-1} \\
    &        & \ddots &   \vdots  \\
    &        &        & Q_0
\end{array}
\right]}_{\displaystyle j+1 \; \mbox{block columns}} , \quad \mbox{for $j=0,1,2,\ldots$.}
\end{equation}
Notice that for $j=0$, the matrix $C_0 (Q)$ is a block column matrix whose block entries are the matrix coefficients of the polynomial $Q(\lambda)$.

\begin{lemma}\label{lemma:solving-step2}
Let $K(\lambda)\in\mathbb{F}[\lambda]^{s\times (s+t)}$ and $N(\lambda)\in\mathbb{F}[\lambda]^{t\times (s+t)}$ be dual minimal bases such that all the row degrees of $N(\lambda)$ are equal to $n$ and all the row degrees of $K(\lambda)$ are equal to $k$.
Let $Q(\lambda)\in\mathbb{F}[\lambda]^{t\times r}$ be a matrix polynomial of grade $n+b$, with $b\geq k$.
Then, the following statements hold.
\begin{itemize}
\item[\rm (a)] The equation
\begin{equation}\label{eq:NB=Q}
N(\lambda)B(\lambda)=Q(\lambda)
\end{equation}
for the unknown matrix polynomial $B(\lambda)$ of grade $b$ has infinitely many solutions.
Moreover, when $\deg(Q(\lambda))=n+b$, all of such solutions have degree equal to $b$.
\item[\rm (b)] The set of solutions of \eqref{eq:NB=Q} depends on $(b-k+1)sr$ free parameters.
\item[\rm (c)] If $B_0(\lambda)$ is a particular solution of \eqref{eq:NB=Q}, then any other solution of \eqref{eq:NB=Q} can be written as
\[
B(\lambda)=B_0(\lambda)+K(\lambda)^T X(\lambda),
\]
for some $X(\lambda)\in\mathbb{F}_{b-k}[\lambda]^{s\times r}$.
\end{itemize}
\end{lemma}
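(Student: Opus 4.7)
The plan is to reduce the polynomial matrix equation $N(\lambda)B(\lambda) = Q(\lambda)$ to a block linear system over $\mathbb{F}$ using the convolution matrix $C_b(N)$. Because the equation decouples across the $r$ columns of $B$ and $Q$, it suffices to treat the case $r=1$ and multiply the final counts by $r$. Writing $\vec{b}$ and $\vec{q}$ for the column vectors stacking the coefficients of $B(\lambda)$ (grade $b$) and $Q(\lambda)$ (grade $n+b$), the polynomial equation is equivalent to $C_b(N)\,\vec{b} = \vec{q}$, where $C_b(N)$ has size $t(n+b+1) \times (s+t)(b+1)$. Thus parts (a) and (b) reduce to establishing that $C_b(N)$ has full row rank $t(n+b+1)$, in which case the kernel has dimension $(s+t)(b+1) - t(n+b+1) = s(b+1) - tn$; Theorem \ref{thm:dual_sum} applied to the dual pair $(K,N)$ gives $tn = sk$, so the kernel should turn out to be exactly $s(b-k+1)$-dimensional.

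The key step is to identify this kernel explicitly with the set of $B(\lambda) = K(\lambda)^T X(\lambda)$ with $X(\lambda) \in \mathbb{F}_{b-k}[\lambda]^{s \times 1}$. Inclusion is immediate from $N(\lambda)K(\lambda)^T = 0$ and produces an $s(b-k+1)$-dimensional subspace of $\ker C_b(N)$. For the reverse inclusion, assume $N(\lambda)B(\lambda) = 0$ with $B(\lambda)$ polynomial of grade $b$. Since the columns of $K(\lambda)^T$ span the right rational nullspace of $N(\lambda)$ (as $K$ and $N$ are dual minimal bases), we have $B(\lambda) = K(\lambda)^T X(\lambda)$ for some rational $X(\lambda)$; invoking Theorem \ref{thm:Embedding} yields a polynomial left inverse $\widehat{N}(\lambda)$ of $K(\lambda)^T$, so that $X(\lambda) = \widehat{N}(\lambda) B(\lambda)$ is polynomial. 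Because $K(\lambda)$ is row reduced with all row degrees equal to $k$, its highest row degree coefficient matrix $K_h$ has full row rank, which forces $\deg(K(\lambda)^T X(\lambda)) = k + \deg(X(\lambda))$; the assumption $\deg(B) \le b$ then gives $\deg(X) \le b-k$. The kernel therefore has dimension exactly $s(b-k+1)$, which establishes full row rank of $C_b(N)$, hence the consistency asserted in (a), the parameter count $(b-k+1)sr$ in (b), and the affine description in (c) once a particular solution $B_0(\lambda)$ is fixed.

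Finally, for the degree sharpening in (a), assume $\deg(Q(\lambda)) = n+b$. Equating the coefficients of $\lambda^{n+b}$ in $N(\lambda)B(\lambda) = Q(\lambda)$ gives $Q_{n+b} = N_n B_b$; since $N(\lambda)$ is row reduced with constant row degrees equal to $n$, the matrix $N_n$ has full row rank, and $Q_{n+b} \neq 0$ forces $B_b \neq 0$, i.e.\ $\deg(B(\lambda)) = b$. The main obstacle in this plan is the rational-to-polynomial passage for $X(\lambda)$ together with the grade bound on it; both are resolved by systematically exploiting the minimal basis structure of $K(\lambda)$, namely its full-column-rank embedding into a unimodular polynomial via Theorem \ref{thm:Embedding} and its row reducedness as in Definition \ref{def:rowreduced}.
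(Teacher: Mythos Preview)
Your argument is correct and takes a genuinely different route from the paper's proof. The paper establishes full row rank of $C_b(N)$ \emph{directly}, by a block-triangular partition
\[
C_b(N)=\begin{bmatrix} A_{11}(N) & 0 \\ A_{21}(N) & C_{k-1}(N) \end{bmatrix},
\]
where $A_{11}(N)$ is block lower-triangular with $N_n$ on the diagonal (hence full row rank because $N$ is row reduced with constant row degrees), and $C_{k-1}(N)$ is square and nonsingular because $N$ is a full-Sylvester-rank minimal basis. Only afterwards does the paper count the kernel dimension by rank--nullity for part~(b), and for part~(c) it checks that the map $X(\lambda)\mapsto K(\lambda)^T X(\lambda)$ is injective and has the correct number of parameters, so its image must equal the kernel.

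You reverse the order: you identify the kernel of $\Phi_N$ \emph{first}, as exactly $\{K(\lambda)^T X(\lambda):X\in\mathbb{F}_{b-k}[\lambda]^{s\times r}\}$, using Theorem~\ref{thm:Embedding} to get a polynomial left inverse $\widehat{N}(\lambda)$ of $K(\lambda)^T$ (so $X=\widehat{N}B$ is polynomial) and the row-reducedness of $K(\lambda)$ to bound $\deg X\le b-k$. Then rank--nullity gives $\operatorname{rank}C_b(N)=(s+t)(b+1)-s(b-k+1)=t(b+1)+sk=t(n+b+1)$, the number of rows, using $sk=tn$. This is a clean, self-contained argument that avoids invoking the full-Sylvester-rank property from the perturbation paper and handles (a), (b), (c) in one stroke; the paper's approach, on the other hand, is more constructive in that the block-triangular partition actually tells you how to solve for the coefficients $B_b,\ldots,B_0$ sequentially. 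Both the paper and you obtain the degree-sharpening statement in~(a) the same way, from $N_nB_b=Q_{n+b}$ with $N_n$ of full row rank.
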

\begin{proof}
{\bf Proof of part (a)}.
Let us write $N(\lambda) = \sum_{i=0}^n  N_{i}\lambda^i$, $B(\lambda)=\sum_{i=0}^bB_{i}\lambda^{i}$ and $Q(\lambda)=\sum_{i=0}^{n+b}Q_{i}\lambda^{i}$.
Equating matrix coefficients on both sides of \eqref{eq:NB=Q}, we obtain the block-linear system 
\begin{equation}\label{eq:conv-N}
\underbrace{\begin{bmatrix}
N_{n} \\
\vdots & N_{n} \\
N_{0} & \vdots & \ddots \\
 & N_{0} & \vdots & N_{n} \\
 & & \ddots & \vdots \\
 & & & N_{0}
\end{bmatrix}}_{b+1 \mbox{ block columns }}
\begin{bmatrix}
B_{b} \\ B_{b-1} \\ \vdots \\ B_0
\end{bmatrix} =
\begin{bmatrix}
Q_{n+b} \\ Q_{n+b-1} \\ \vdots \\ Q_0
\end{bmatrix},
\end{equation}
or, using convolution matrices, $C_{b}(N)C_0(B)=C_0(Q)$.
We will show that this linear system is consistent by showing that the matrix $C_{b}(N)$ has full row rank.
To do this, let us partition the  matrix $C_{b}(N)$ as follows
\[
C_{b}(N)=\begin{bmatrix}
A_{11}(N) & 0 \\
A_{21}(N) & C_{k-1}(N)
\end{bmatrix},
\]
where $A_{11}(N)$ corresponds to the upper-left $(b+1-k)\times (b+1-k)$ block submatrix of $C_{b}(N)$, which is of the form
\[
A_{11}(N) = 
\begin{bmatrix}
N_{n} \\
* & \ddots \\
* & * & N_{n}
\end{bmatrix},
\]
where ``$*$'' denotes the parts of $A_{11}(N)$ that are not relevant for the argument.
Notice that the matrix $A_{11}(N)$ has full row rank because $N_{n}$ has full row rank.
Thus, one can solve for $B_{b},\hdots,B_{k}$ from  the linear system
\[
A_{11}(N)
\begin{bmatrix}
B_{b} \\ B_{b-1} \\ \vdots \\ B_k
\end{bmatrix}
= \begin{bmatrix}
Q_{n+b} \\ Q_{n+b-1} \\ \vdots \\ Q_{n+k}
\end{bmatrix},
\]
since the above system is always consistent.
Additionally, when $Q(\lambda)$ is assumed to have degree $n+b$, i.e., $Q_{n+b}\neq 0$, notice that we have $N_{n}B_{b}=Q_{n+b}$, which implies $B_{b}\neq 0$ because $N_n$ has full row rank.
Hence, when $\deg(Q(\lambda))=n+b$, all the solutions of \eqref{eq:NB=Q} have exactly degree $b$.

Next, we can solve for $B_{k-1},\hdots,B_0$ from 
\[
C_{k-1}(N)
\begin{bmatrix}
B_{k-1} \\ B_{k-2} \\ \vdots \\ B_0 
\end{bmatrix}=
\begin{bmatrix}
Q_{n+k-1} \\ Q_{n+k-2} \\ \vdots \\ Q_0
\end{bmatrix}-
A_{21}(N)
\begin{bmatrix}
B_{b} \\ B_{b-1} \\ \vdots \\ B_k 
\end{bmatrix},
\]
which has a unique solution since the matrix $C_{k-1}(N)$ is nonsingular.
The nonsingularity of $C_{k-1}(N)$ follows from the following argument.
First, applying Theorem \ref{thm:dual_sum} to the dual minimal bases $K(\lambda)$ and $N(\lambda)$, we obtain $nt=ks$.
Then, notice that the matrix $C_{k-1}(N)$ has size $(n+k)t\times (s+t)k$ or, using $nt=ks$, $(n+k)t\times (n+k)t$.
Hence, it is a square matrix.
Finally, note that $K(\lambda)$ is a full-Sylvester-rank matrix (see, for example, \cite[Theorem 4.4]{perturbation}) and, thus, all its convolution matrices have full rank.
Therefore, the matrix $C_{k-1}(N)$ is nonsingular.

\medskip

\noindent {\bf Proof of part (b)}.
Let us introduce the following linear operator
\begin{align}\label{eq:Phi}
\begin{split}
\Phi_N:\mathbb{F}_{b}[\lambda]^{(s+t)\times r}&\longrightarrow \mathbb{F}_{n+b}[\lambda]^{t\times r}\\
B(\lambda) &\longrightarrow N(\lambda)B(\lambda),
\end{split}
\end{align}
which, by part (a), is surjective.
Since $\mathbb{F}_b[\lambda]^{(s+t)\times r}\cong\mathbb{F}^{(s+t)r(b+1)}$ and $\mathbb{F}_{n+b}[\lambda]^{t\times r}\cong \mathbb{F}^{tr(n+b+1)}$, and using $nt=k s$ (which, we recall, follows from applying Theorem \ref{thm:dual_sum} to $K(\lambda)$ and $N(\lambda)$), we readily obtain that
\[
\mathrm{dim}(\mathrm{null}(\Phi_N))=(b-k+1)sr,
\]
which shows that the set of solutions of \eqref{eq:NB=Q} depends on $(b-k+1)sr$ free parameters.

\medskip

\noindent {\bf Proof of part (c)}. 
Note that the set of matrix polynomials of the form $K(\lambda)^TX(\lambda)$, where $X(\lambda)\in\mathbb{F}[\lambda]^{s\times r}$ is a matrix polynomial of grade $b-k$, is a vector subspace that is contained in $\mathrm{null}(\Phi_N)$, with $(b-k+1)sr$ free parameters (the entries of the matrix coefficients of $X(\lambda)$).
Hence, it suffices to show that the mapping $X(\lambda)\rightarrow K(\lambda)^T X(\lambda)$ is injective. 
Indeed $K(\lambda)^T X(\lambda)=0$ can only hold if $X(\lambda)=0$ because $K(\lambda)$ has full normal row rank.
\end{proof}

As a consequence of Lemma \ref{lemma:solving-step2}, the convolution matrix $C_b(N)$ in \eqref{eq:conv-N} has full row rank.
Hence, the matrix $C_b(N)^\dagger C_0(Q)$ is a solution of \eqref{eq:conv-N}, where $A^\dagger$ denotes the Moore-Penrose pseudoinverse of a matrix $A$.
This motivates the following definition.
\begin{definition}
Let $N(\lambda)\in\mathbb{F}[\lambda]^{t\times (s+t)}$ be a minimal basis with all its row degrees equal to $n$ and with all its right minimal indices equal to $k$, and let $\Phi_N$ be the linear operator in \eqref{eq:Phi}.
Then, we introduce the linear operator
\begin{align}\label{eq:Phi-dagger}
\begin{split}
\Phi_N^\dagger:\mathbb{F}_{n+b}[\lambda]^{t\times r}&\longrightarrow \mathbb{F}_{b}[\lambda]^{(s+t)\times r}\\
Q(\lambda) &\longrightarrow \Phi_N^\dagger[Q](\lambda)=B(\lambda),
\end{split}
\end{align}
where $B(\lambda)$ is defined by partitioning  $C_b(N)^\dagger C_0(Q)$ into $b+1$ blocks of size $(s+t)\times r$ and interpreting these blocks as the matrix coefficients of $B(\lambda)$, i.e., $C_0(B)=C_b(N)^\dagger C_0(Q)$.
The matrix polynomial $\Phi_N^\dagger[Q](\lambda)$ is called \emph{the minimum norm solution to $N(\lambda)B(\lambda)=Q(\lambda)$}.
\end{definition}

We finally show in Theorem \ref{thm:solution_final} that the equation \eqref{eq:Step2} is consistent for every $P(\lambda)$, determine the number of free parameters that its set of solution depends on, and give a concise characterization of this set.
To do this, let us notice that the linear operator 
\begin{align}\label{eq:Psi}
\begin{split}
\Psi_{(N_1,N_2)}:\mathbb{F}_{\ell}[\lambda]^{(m+m_2)\times (n+m_1)}&\longrightarrow \mathbb{F}_{d}[\lambda]^{m\times n}\\
M(\lambda) &\longrightarrow  N_2(\lambda)M(\lambda)N_1(\lambda)^T
\end{split}
\end{align}
can be written as the composition $\Psi_{(N_1,N_2)} = \phi_{N_2} \circ (\cdot)^T \circ \phi_{N_1}\circ (\cdot)^T$, where $(\cdot)^T$ denotes the transpose operation and $g\circ f$ denotes the composition of $g$ with $f$.

\begin{theorem}\label{thm:solution_final}
Let $P(\lambda)\in\mathbb{F}[\lambda]^{m\times n}$ be a matrix polynomial of grade $d$, and let $\ell < d$.
Let $K_1(\lambda)$ and $N_1(\lambda)$, and  $K_2(\lambda)$ and $N_2(\lambda)$ be two pairs of dual minimal bases obtained from {\bf Step 1}.
Then, the following statements hold.
\begin{itemize}
\item[\rm (a)] The equation \eqref{eq:Step2}  for the unknown matrix polynomial $M(\lambda)$ of grade $\ell$ has infinitely many solutions.
When $\deg(P(\lambda))=d$, all of such solutions have degree equal to $\ell$.
\item[\rm (b)] The set of solutions of \eqref{eq:Step2} depends on $m_2n(\epsilon+1)+(m_2+m)m_1$ free parameters.
\item[\rm (c)] Let $\Psi^\dagger_{(N_1,N_2)}:=(\cdot)^T\circ\phi^\dagger_{N_1}\circ (\cdot)^T \circ \phi_{N_2}^\dagger$.
Then, any solution of \eqref{eq:Step2} can be written as
\[
M(\lambda) = \Psi^\dagger_{(N_1,N_2)}[P](\lambda)+\Phi_1^\dagger[X^TK_2](\lambda)^T+Y^TK_1(\lambda),
\]
for some $Y\in\mathbb{F}^{m_1\times (m_2+m)}$ and $X(\lambda)\in\mathbb{F}_\epsilon[\lambda]^{m_2\times n}$.
\end{itemize}
\end{theorem}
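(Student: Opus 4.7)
The plan is to reduce the two-sided equation \eqref{eq:Step2} to two successive one-sided equations, each fitting the hypotheses of Lemma \ref{lemma:solving-step2}. I would work under the assumption $\epsilon,\eta>0$ stated just before the theorem, since the degenerate cases follow from \cite{FFP}. The decomposition I would exploit is exactly the one already advertised in the paper: Steps 2.1 and 2.2, coupled with the factorization $\Psi_{(N_1,N_2)} = \Phi_{N_2}\circ(\cdot)^T\circ\Phi_{N_1}\circ(\cdot)^T$.

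First I would apply Lemma \ref{lemma:solving-step2} to Step 2.1. The dual pair $(K_2(\lambda),N_2(\lambda))$ fits the lemma with $n=\eta$, $k=\ell$, and target grade $b=d-\eta$; the condition $b\ge k$ is just $\epsilon\ge 0$. Existence of a grade-$(d-\eta)$ solution $B(\lambda)$ is granted, every solution has the form
\[
B(\lambda) = \Phi_{N_2}^\dagger[P](\lambda) + K_2(\lambda)^T X(\lambda), \qquad X(\lambda)\in\mathbb{F}_\epsilon[\lambda]^{m_2\times n},
\]
and when $\deg P=d$ every such $B$ has exact degree $d-\eta$. Second, I would transpose Step 2.2 to bring it into the canonical one-sided form $N_1(\lambda)\widetilde{M}(\lambda)=B(\lambda)^T$ with $\widetilde{M}=M^T$ of grade $\ell$. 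Lemma \ref{lemma:solving-step2} applies once again with $n=\epsilon$, $k=\ell$, $b=\ell$, so $b-k=0$ and the free parameter is the constant matrix $Y\in\mathbb{F}^{m_1\times(m+m_2)}$; part (c) of the lemma gives
\[
M(\lambda)^T = \Phi_{N_1}^\dagger\bigl[B^T\bigr](\lambda) + K_1(\lambda)^T Y.
\]

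Substituting the expression for $B(\lambda)$, invoking linearity of $\Phi_{N_1}^\dagger$ and the identity $(K_2^T X)^T = X^T K_2$, and transposing yields
\[
M(\lambda) = \bigl(\Phi_{N_1}^\dagger\bigl[\Phi_{N_2}^\dagger[P]^T\bigr](\lambda)\bigr)^T + \Phi_{N_1}^\dagger\bigl[X^T K_2\bigr](\lambda)^T + Y^T K_1(\lambda),
\]
and the first summand is precisely $\Psi^\dagger_{(N_1,N_2)}[P](\lambda)$ by the definition given in the statement; this establishes part (c). Part (a) is then immediate: existence comes from the two applications of Lemma \ref{lemma:solving-step2}(a), and the degree claim from chaining its grade/degree implication across the two substeps. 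For part (b), I would add the two independent parameter counts given by Lemma \ref{lemma:solving-step2}(b), namely $(\epsilon+1)m_2 n$ from $X(\lambda)$ and $m_1(m+m_2)$ from $Y$, obtaining exactly $m_2 n(\epsilon+1)+(m_2+m)m_1$.

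The only subtle point, and the place where I expect the actual work, is verifying that the parameterization by $(X,Y)$ is injective into the solution set so that the two counts combine without overlap. This comes down to three small checks: that $X\mapsto K_2(\lambda)^T X(\lambda)$ is injective because $K_2(\lambda)^T$ has full normal column rank as the transpose of a minimal basis; that $B\mapsto \Phi_{N_1}^\dagger[B^T]$ is injective on the affine set of $B$'s produced by Step 2.1; and that the only $Y$ for which $K_1(\lambda)^T Y$ lies in the image of $\Phi_{N_1}^\dagger$ is $Y=0$, which is exactly the direct-sum decomposition furnished by Lemma \ref{lemma:solving-step2}(c) combined with the definition of the minimum-norm solution operator.
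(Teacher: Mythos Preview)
Your proposal is correct and follows essentially the same approach as the paper: apply Lemma~\ref{lemma:solving-step2} first to $N_2(\lambda)B(\lambda)=P(\lambda)$ and then to the transposed equation $N_1(\lambda)M(\lambda)^T=B(\lambda)^T$, taking the minimum-norm solutions as particular solutions. The paper's own proof is a single sentence to this effect, so your write-up simply fills in the details (parameter identification, degree tracking, and the injectivity check for the $(X,Y)$ parameterization) that the paper leaves implicit.
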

\begin{proof}
The results follow by applying repeatedly Lemma \ref{lemma:solving-step2} to $N_2(\lambda)B(\lambda)=P(\lambda)$ and $N_1(\lambda)M(\lambda)^T=B(\lambda)^T$, taking the minimum norm solutions as particular solutions.
\end{proof}

\begin{remark}
Given a particular solution $M_0(\lambda)$ of  \eqref{eq:Step2}, the set of grade-$\ell$ matrix polynomials of the form $M_0(\lambda)+K_2(\lambda)^TX+YK_1(\lambda)$, where $X,Y$ are arbitrary matrices of appropriate size,  is a subset of the set of solutions of \eqref{eq:Step2}.
\end{remark}
\begin{remark}
Given a particular solution $M_0(\lambda)$ of \eqref{eq:Step2}, the set of solutions of \eqref{eq:Step2} takes a simpler form than the one in part (c) in Theorem \ref{thm:solution_final} in three cases.
The first  case is when $\ell=1$.
In this situation, the set of solutions of \eqref{eq:Step2} is equal to the set of pencils of the form
\[
M_0(\lambda)+K_2(\lambda)^TX+YK_1(\lambda),
\]
where $X,Y$ are arbitrary matrices of appropriate sizes.
The other two cases are when either $m_1=0$ or $m_2=0$.
In the former case, the set of solutions of \eqref{eq:Step2} is equal to the set of matrix polynomials of the form $M_0(\lambda)+K_2(\lambda)^TX$, and in the latter, $M_0(\lambda)+YK_1(\lambda)$, where $X,Y$ are again arbitrary matrices of appropriate sizes.
\end{remark}

\smallskip

In Example \ref{ex:sym_quad1}, we apply our new procedure for constructing strong $\ell$-ifications to the problem of quadratizacing a symmetric matrix polynomial in a structure-preserving way.
The interest of this example stems from the fact that there are symmetric matrix polynomials with even grade for which it is impossible to construct symmetric strong linearizations \cite[Theorem 7.20]{IndexSum}.
\begin{example}\label{ex:sym_quad1}
Let $P(\lambda)=\left[ \begin{smallmatrix} \lambda^4 & 0 \\ 0 & 0 \end{smallmatrix} \right]$, which is symmetric, that is, $P(\lambda)^T=P(\lambda)$.
The matrix polynomial $P(\lambda)$ is singular with exactly one right minimal index and one left minimal index, both equal to zero.
Hence, $P(\lambda)$ does not have any symmetric strong linearization by \cite[Corollary 7.19]{IndexSum}.
Nevertheless, we show in this example that $P(\lambda)$ can be ``quadratized'' in a structure-preserving way.
To do this, we use a strong block minimal bases quadratic matrix polynomial of the form
\begin{equation}\label{eq:example-sym}
\left[ \begin{array}{c|c}
M(\lambda) & K(\lambda)^T \\ \hline
K(\lambda) & 0
\end{array}\right]
\end{equation}
where $K(\lambda)$ and a dual minimal basis to $K(\lambda)$, denoted by $N(\lambda)$, are given, respectively, by
\[
K(\lambda) = \begin{bmatrix}
1 & -\lambda & \lambda^2
\end{bmatrix} \quad \mbox{and} \quad 
N(\lambda) = \begin{bmatrix}
\lambda & 1 & 0 \\
0 & \lambda & 1
\end{bmatrix}.
\]
From Theorem \ref{thm:ell-ification}, we see that in order to obtain a symmetric strong quadratification of the form \eqref{eq:example-sym}, we need to solve
\[
\begin{bmatrix}
\lambda^4 & 0 \\ 0  & 0 
\end{bmatrix} =
\begin{bmatrix}
\lambda & 1 & 0 \\
0 & \lambda & 1
\end{bmatrix}
M(\lambda)
\begin{bmatrix}
\lambda & 0 \\ 1 & \lambda \\ 0 & 1
\end{bmatrix}
\]
for a symmetric $M(\lambda)$ with degree equal to 2. 
The quadratic matrix polynomial $M(\lambda)=\left[\begin{smallmatrix} \lambda^2 & 0 & 0 \\ 0 & 0 & 0 \\ 0 & 0 & 0 \end{smallmatrix}\right]$ is one of such solutions.
Therefore, we conclude that
\[
\left[ \begin{array}{ccc|c}
\lambda^2 & 0 & 0 & 1 \\ 0 & 0 & 0 & -\lambda \\ 0 & 0 & 0 & \lambda^2 \\ \hline
1 & -\lambda & \lambda^2 & 0
\end{array}\right]
\]
is a symmetric strong quadratification of $P(\lambda)$.
\end{example}

\subsection{When $\ell$ divides $nd$}\label{sec:ell_divides_nell}

We consider in this subsection the problem of constructing strong $\ell$-ifications using strong block minimal bases matrix polynomials in the case when $\ell$ divides $nd$.
Our construction follows from the following lemma.
\begin{lemma}\label{lemma:transpose}
Let $\mathcal{L}(\lambda)$ be a strong block minimal bases degree-$\ell$ matrix polynomial as in \eqref{eq:minbas_ell}, and let $Q(\lambda)$ be the matrix polynomial in \eqref{eq:Qpolinminbaslin}.
Then, $\mathcal{L}(\lambda)^T$ is also a strong block minimal bases degree-$\ell$ matrix polynomial, which is a strong $\ell$-ification of the matrix polynomial $Q(\lambda)^T$.
\end{lemma}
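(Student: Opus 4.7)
The plan is to observe that transposition exchanges the two ``arms'' of the block minimal bases form and then to invoke Theorem \ref{thm:ell-ification}(b) with the roles of the two pairs of dual minimal bases interchanged.

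First I would simply transpose the explicit block form of $\mathcal{L}(\lambda)$ in \eqref{eq:minbas_ell} to obtain
\[
\mathcal{L}(\lambda)^T = \begin{bmatrix} M(\lambda)^T & K_1(\lambda)^T \\ K_2(\lambda) & 0 \end{bmatrix}.
\]
Reading this against Definition \ref{def:block_dual_bases_ell}, the arbitrary grade-$\ell$ block is now $M(\lambda)^T$, while the roles played by $K_1(\lambda)$ and $K_2(\lambda)$ in the original definition are now played by $K_2(\lambda)$ and $K_1(\lambda)$, respectively. Since the four conditions defining a strong block minimal bases matrix polynomial (both $K_1(\lambda)$ and $K_2(\lambda)$ being minimal bases of degree $\ell$ with all row degrees equal to $\ell$, and their respective dual minimal bases $N_1(\lambda)$ and $N_2(\lambda)$ having all row degrees equal) are symmetric in the subscripts $1$ and $2$, they hold equally well after the swap. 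Hence $\mathcal{L}(\lambda)^T$ is itself a strong block minimal bases degree-$\ell$ matrix polynomial, with the associated dual minimal bases pairs being $(K_2(\lambda), N_2(\lambda))$ and $(K_1(\lambda), N_1(\lambda))$.

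Next I would apply Theorem \ref{thm:ell-ification}(b) to $\mathcal{L}(\lambda)^T$ with this new identification of dual minimal bases pairs. The theorem yields that $\mathcal{L}(\lambda)^T$ is a strong $\ell$-ification (with the appropriate grade) of
\[
N_1(\lambda)\, M(\lambda)^T\, N_2(\lambda)^T = \bigl(N_2(\lambda) M(\lambda) N_1(\lambda)^T\bigr)^T = Q(\lambda)^T,
\]
where the grade in question is $\ell + \deg(N_2(\lambda)) + \deg(N_1(\lambda))$, which matches the grade used for $Q(\lambda)$ in Theorem \ref{thm:ell-ification}(b).

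There is no real obstacle here; the content of the lemma is exactly the symmetry of Definition \ref{def:block_dual_bases_ell} under transposition together with the identity $(N_2 M N_1^T)^T = N_1 M^T N_2^T$. The only point that merits a brief sentence is that the conditions on dual minimal bases appearing in the definition are invariant under swapping the indices $1$ and $2$, so no additional verification beyond quoting Theorem \ref{thm:ell-ification}(b) is required.
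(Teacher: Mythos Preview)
Your proposal is correct and follows essentially the same approach as the paper: both observe that transposition swaps the roles of $(K_1,N_1)$ and $(K_2,N_2)$ so that $\mathcal{L}(\lambda)^T$ is again a strong block minimal bases degree-$\ell$ matrix polynomial, and then both invoke Theorem~\ref{thm:ell-ification}(b) together with $(N_2 M N_1^T)^T = N_1 M^T N_2^T$ to conclude. Your write-up is simply a more explicit version of the paper's two-line argument.
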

\begin{proof}
Clearly, $\mathcal{L}(\lambda)^T$ is also a strong block minimal bases degree-$\ell$ matrix polynomial with the roles of $(K_1(\lambda), N_1(\lambda))$ and $(K_2(\lambda),N_2 (\lambda))$ interchanged.
Thus,  $\mathcal{L}(\lambda)^T$ is a strong $\ell$-ification of $Q(\lambda)^T$.
\end{proof}

Let $P(\lambda)\in\mathbb{F}[\lambda]^{m\times n}$ be a matrix polynomial with degree $d$, and assume that there is $\ell$ such that $\ell$ divides $nd$.
We obtain from Lemma \ref{lemma:transpose} that if the process developed in the previous section for the case when $\ell$ divides $md$ is applied to the matrix polynomial $P(\lambda)^T$ (of size $n\times m$ and degree $d$), then a strong $\ell$-ification $\mathcal{L}(\lambda)^T$ of $P(\lambda)^T$ is constructed, and this gives a strong $\ell$-ification $\mathcal{L}(\lambda)$ of $P(\lambda)$.

\subsection{When $\ell$ divides $d$: block Kronecker matrix polynomials and companion $\ell$-ifications}\label{sec:companion}

In applications, the most important type of strong $\ell$-ifications are the  so called \emph{companion $\ell$-ifications} \cite[Definition 5.1]{IndexSum}, also known as companion forms. 
In words, companion $\ell$-ifications are uniform templates for constructing matrix polynomials $L(\lambda)=\sum_{i=0}^\ell L_i\lambda^i$ of degree $\ell$, which are strong $\ell$-ifications for any matrix polynomial $P(\lambda)=\sum_{i=0}^d P_i\lambda^i$ of fixed grade and size. 
Furthermore, for $i=0,\hdots,\ell$, the entries of $L_i$ are scalar-valued function of the entries of $P_0,P_1,\hdots,P_d$.
 These scalar-valued functions are either a constant or a constant multiple of just one of the entries of $P_0,P_1,\hdots,P_d$.
For $\ell>1$, the only known example of companion $\ell$-ifications are the Frobenius-like companion $\ell$-ifications in \cite{IndexSum}.
 For $\ell=1$, many other companion linearizations exist \cite{DTDM10}. 

One of the aims of this section is to present a procedure for constructing new companion $\ell$-ifications.
We start by introducing a subfamily of strong block minimal bases  matrix polynomials.
This family generalizes the block Kronecker pencils  \cite{FPJP} from $\ell=1$ to any degree $\ell$.
The advantage of this family  over general strong block minimal bases matrix polynomials as in \eqref{eq:minbas_ell} is that  it is very easy to characterize the set of $(1,1)$ blocks $M(\lambda)$ that make them strong $\ell$-ifications of a prescribed matrix polynomial $P(\lambda)$.

\begin{definition} \label{def:blockKronlin} Let $L_k (\lambda)$ be the matrix polynomial defined in \eqref{eq:Lk} and let $M(\lambda)$ be an arbitrary matrix polynomial of grade $\ell$.
 Then any matrix polynomial of the form
\begin{equation}
  \label{eq:Kronecker_ell}
  \mathcal{L}(\lambda)=
  \left[
    \begin{array}{c|c}
     M(\lambda)  &L_{\eta}(\lambda^\ell)^{T}\otimes I_{m}\\\hline
      L_{\epsilon}(\lambda^\ell)\otimes I_{n}& \phantom{\Big{(}} 0 \phantom{\Big{(}}
      \end{array}
    \right]
  \>,
\end{equation}
is called an \emph{$(\epsilon,n,\eta,m)$-block Kronecker degree-$\ell$ matrix polynomial} or, simply, a {\em block Kronecker matrix polynomial} when its size and degree are clear from the context. 
\end{definition}

The following theorem for block Kronecker  matrix polynomials follows immediately as a corollary of the general result in part (b) of Theorem \ref{thm:ell-ification} for strong block minimal bases  matrix polynomials.
\begin{theorem}\label{thm:strong_Kronecker}
Let $\mathcal{L}(\lambda)$ be an $(\epsilon , n, \eta , m)$-block Kronecker degree-$\ell$ matrix polynomial  as in \eqref{eq:Kronecker_ell}.
Then $\mathcal{L}(\lambda)$ is a strong $\ell$-ification of the  matrix polynomial
\begin{equation}
\label{eq:condition}
 (\Lambda_\eta(\lambda^\ell)^T\otimes I_m)M(\lambda)(\Lambda_{\epsilon}(\lambda^\ell)\otimes I_n) \in \mathbb{F}[\lambda]^{m\times n}
\end{equation}
of  grade $\ell(\epsilon+\eta+1)$.
\end{theorem}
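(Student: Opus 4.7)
The plan is to reduce Theorem \ref{thm:strong_Kronecker} directly to part (b) of Theorem \ref{thm:ell-ification} by reading off the two pairs of dual minimal bases from the block structure of $\mathcal{L}(\lambda)$. The block Kronecker form is tailor-made for this: the $(2,1)$ block and the $(1,2)$ block are exactly of the type produced by Lemma \ref{lemma:L-Lamb}, so all of the hypotheses of Theorem \ref{thm:ell-ification}(b) can be verified by a quick inspection of row degrees, without any extra work.

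Concretely, I would set
\[
K_1(\lambda):=L_{\epsilon}(\lambda^\ell)\otimes I_n, \qquad K_2(\lambda):=L_{\eta}(\lambda^\ell)\otimes I_m,
\]
and then identify, again using Lemma \ref{lemma:L-Lamb}, their dual minimal bases as
\[
N_1(\lambda):=\Lambda_{\epsilon}(\lambda^\ell)^T\otimes I_n, \qquad N_2(\lambda):=\Lambda_{\eta}(\lambda^\ell)^T\otimes I_m.
\]
With these identifications, $\mathcal{L}(\lambda)$ is literally the matrix in \eqref{eq:minbas_ell}, so it remains to check that it is a \emph{strong} block minimal bases degree-$\ell$ matrix polynomial in the sense of Definition \ref{def:block_dual_bases_ell}. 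The four row-degree conditions are immediate: the row degrees of $K_1(\lambda)$ and $K_2(\lambda)$ are all equal to $\ell$ (one sees this from \eqref{eq:Lk} after the substitution $\lambda\mapsto\lambda^\ell$ and tensoring with the identity), while the row degrees of the duals $N_1(\lambda)$ and $N_2(\lambda)$ are all equal to $\ell\epsilon$ and $\ell\eta$, respectively, by \eqref{eq:Lambda}.

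Theorem \ref{thm:ell-ification}(b) then applies and tells us that $\mathcal{L}(\lambda)$ is a strong $\ell$-ification of
\[
Q(\lambda)=N_2(\lambda)\,M(\lambda)\,N_1(\lambda)^T=(\Lambda_{\eta}(\lambda^\ell)^T\otimes I_m)\,M(\lambda)\,(\Lambda_{\epsilon}(\lambda^\ell)\otimes I_n),
\]
considered with grade $\ell+\deg(N_1(\lambda))+\deg(N_2(\lambda))=\ell+\ell\epsilon+\ell\eta=\ell(\epsilon+\eta+1)$. This is precisely the statement of Theorem \ref{thm:strong_Kronecker}.

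There is really no obstacle here; the proof is essentially a bookkeeping argument. The only thing one has to be a bit careful with is the grade count (so that the reversal in Theorem \ref{thm:ell-ification}(b) matches the claimed grade $\ell(\epsilon+\eta+1)$ rather than merely the degree of $Q(\lambda)$), but once the row-degree data above are recorded this is automatic.
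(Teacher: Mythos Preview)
Your proposal is correct and matches the paper's own treatment exactly: the paper states that Theorem \ref{thm:strong_Kronecker} ``follows immediately as a corollary of the general result in part (b) of Theorem \ref{thm:ell-ification},'' and your argument simply spells out that corollary by identifying $K_i$, $N_i$ via Lemma \ref{lemma:L-Lamb} and checking the row-degree conditions of Definition \ref{def:block_dual_bases_ell}.
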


Based on block Kronecker matrix polynomials, we now construct companion $\ell$-ifications for $m\times n$ matrix polynomials of grade $d$ for any $\ell<d$, provided that $\ell$ divides $d$, that is, $d=k\ell$, for some non-zero natural number $k$.
Except when $k=2$, these companion $\ell$-ifications are different from the Frobenius-like companion $\ell$-ifications in \cite{IndexSum}.
To this end, let us consider again the matrix polynomials $\{B_i(\lambda)\}_{i=1}^{k}$ defined in \eqref{eq:B_poly} associated with a matrix polynomial $P(\lambda)=\sum_{i=0}^d P_i\lambda^i\in\mathbb{F}[\lambda]^{m\times n}$ of grade $d=k\ell$.
Notice that these polynomials satisfy the equality
\begin{equation}\label{eq:relation_B}
P(\lambda) = \lambda^{k(\ell-1)}B_k(\lambda)+\lambda^{k(\ell-2)}B_{k-1}(\lambda)+\cdots + \lambda^\ell B_2(\lambda)+B_1(\lambda).
\end{equation}
Then, let us fix $\epsilon,\eta\in\mathbb{N}$ such that $\epsilon+\eta=k-1$.
Notice that, except in the case $k=2$, both $\epsilon$ and $\eta$ can be chosen to be nonzero simultaneously (if either $\epsilon$ or $\eta$ is zero the construction that follows produces one of the Frobenius-like $\ell$-ifications).
Let us define the grade-$\ell$ matrix polynomial
\begin{equation}\label{eq:Sigma}
\Sigma_{P}^{(\epsilon,\eta)}(\lambda):=
\begin{bmatrix}
B_k(\lambda) & B_{k-1}(\lambda) & \cdots & B_{\eta+1}(\lambda)\\
0 & \cdots & 0 & \vdots \\
\vdots & \ddots & \vdots & B_2(\lambda) \\
0 & \cdots & 0 & B_1(\lambda)
\end{bmatrix},
\end{equation}
and notice that \eqref{eq:relation_B} implies 
\begin{equation}\label{eq:relation_Sigma}
(\Lambda_\eta(\lambda^\ell)^T\otimes I_m)\Sigma_{P}^{(\epsilon,\eta)}(\lambda)(\Lambda_{\epsilon}(\lambda^\ell)\otimes I_n) = P(\lambda).
\end{equation}
From Theorem \ref{thm:strong_Kronecker}, together with \eqref{eq:relation_Sigma} and the form of \eqref{eq:Sigma}, we readily obtain the following result.
\begin{theorem}\label{thm:companion_forms}
Let $P(\lambda) = \sum_{i=0}^d P_i \lambda^i \in \mathbb{F}[\lambda]^{m \times n}$ be a matrix polynomial with grade $d=k\ell$, for some $k$.
Let $\epsilon + \eta + 1=k$, and let $\Sigma_{P}^{(\epsilon,\eta)}(\lambda)$ be the matrix polynomial in \eqref{eq:Sigma}.
Then, the block Kronecker matrix polynomial
\begin{equation}\label{eq:ell--ification}
  \left[
    \begin{array}{c|c}
    \Sigma_{P}^{(\epsilon,\eta)}(\lambda)  &L_{\eta}(\lambda^\ell)^{T}\otimes I_{m}\\\hline
      L_{\epsilon}(\lambda^\ell)\otimes I_{n}& \phantom{\Big{(}} 0 \phantom{\Big{(}}
      \end{array}
    \right]
\end{equation}
is a companion $\ell$-ification (or a companion form of degree $\ell$) for $m\times n$ matrix polynomials of grade $d$.
\end{theorem}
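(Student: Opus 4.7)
The proof reduces to two observations: that the matrix polynomial \eqref{eq:ell--ification} is a strong $\ell$-ification of $P(\lambda)$, and that the construction is a companion form in the sense of \cite[Definition 5.1]{IndexSum}.

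For the first observation, the plan is to invoke Theorem \ref{thm:strong_Kronecker} with the choice $M(\lambda) = \Sigma_{P}^{(\epsilon,\eta)}(\lambda)$. Since $\Sigma_{P}^{(\epsilon,\eta)}(\lambda)$ has grade $\ell$ by construction (its block entries $B_j(\lambda)$ are, by \eqref{eq:B_poly}, grade-$\ell$ matrix polynomials), \eqref{eq:ell--ification} is indeed a block Kronecker degree-$\ell$ matrix polynomial of the form \eqref{eq:Kronecker_ell}. Theorem \ref{thm:strong_Kronecker} then guarantees that \eqref{eq:ell--ification} is a strong $\ell$-ification of
\[
(\Lambda_\eta(\lambda^\ell)^T \otimes I_m)\,\Sigma_{P}^{(\epsilon,\eta)}(\lambda)\,(\Lambda_\epsilon(\lambda^\ell)\otimes I_n)
\]
considered as a polynomial of grade $\ell(\epsilon+\eta+1) = \ell k = d$. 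By identity \eqref{eq:relation_Sigma}, this polynomial is exactly $P(\lambda)$, so \eqref{eq:ell--ification} is a strong $\ell$-ification of $P(\lambda)$ with the right grade.

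For the second observation, I need to verify that the construction $P(\lambda) \mapsto \eqref{eq:ell--ification}$ is a uniform template whose entries are either constants or constant multiples of entries of the matrix coefficients $P_0,\dots,P_d$. The $(2,1)$ and $(1,2)$ blocks $L_\epsilon(\lambda^\ell)\otimes I_n$ and $L_\eta(\lambda^\ell)^T\otimes I_m$ depend only on $m,n,\ell,\epsilon,\eta$ and not on $P(\lambda)$, and their entries are all constants (namely $0$, $\pm 1$) or equal to $\lambda^\ell$; thus they trivially satisfy the template requirement. For the $(1,1)$ block $\Sigma_{P}^{(\epsilon,\eta)}(\lambda)$, I read off from \eqref{eq:Sigma} and \eqref{eq:B_poly} that every nonzero block entry is either identically zero or of the form $B_j(\lambda) = \sum_{i} P_{\ell(j-1)+i}\lambda^i$, whose entries are scalar multiples of entries of a single coefficient $P_s$. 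In particular, the dependence of each scalar entry of \eqref{eq:ell--ification} on the $P_s$ fits the companion form definition.

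The only real issue to double-check is that the layout of blocks (in particular which $B_j(\lambda)$ appears where in $\Sigma_{P}^{(\epsilon,\eta)}(\lambda)$) is the \emph{same} for every $P(\lambda)$ of the given grade and size, which is clear from the explicit form \eqref{eq:Sigma}. I expect no real obstacle: once Theorem \ref{thm:strong_Kronecker} is applied and \eqref{eq:relation_Sigma} is used, the remainder is a mechanical inspection of the template. The minor bookkeeping point worth stating carefully is the identification of the grade of \eqref{eq:ell--ification} with $\ell$ and of the grade of the polynomial it $\ell$-ifies with $d=\ell k$, to match the convention of \cite[Def.~5.1]{IndexSum}.
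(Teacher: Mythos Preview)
Your proposal is correct and follows exactly the approach the paper takes: the paper derives Theorem \ref{thm:companion_forms} directly ``from Theorem \ref{thm:strong_Kronecker}, together with \eqref{eq:relation_Sigma} and the form of \eqref{eq:Sigma}'', which is precisely your two observations. Your write-up is simply a more explicit version of the same argument.
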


\begin{example}
Let $P(\lambda) = \sum_{i=0}^6 P_i \lambda^i \in \mathbb{F}[\lambda]^{m \times n}$, then the quadratic matrix polynomial
\[
\begin{bmatrix}
\lambda^2P_6+\lambda P_5&\lambda^2P_4+\lambda P_3&\lambda^2P_2+\lambda P_1+P_0 \\ \hline
-I_n & \lambda^2 I_n & 0 \\ 
0 & -I_n & \lambda^2 I_n
\end{bmatrix}
\]
is the first Frobenius-like companion form introduced in \cite{IndexSum}, and the quadratic matrix polynomial
\[
\left[\begin{array}{cc|c}
\lambda^2P_6+\lambda P_5&\lambda^2P_4+\lambda P_3 & -I_m \\
0 & \lambda^2P_2+\lambda P_1+P_0 & \lambda^2 I_m \\ \hline 
-I_n &\lambda^2 I_n & 0
\end{array}\right]
\]
is the matrix polynomial obtained from Theorem \ref{thm:companion_forms} with $\epsilon=\eta=1$.
Notice that both quadratic matrix polynomials are companion quadratifications for $m\times n$ matrix polynomials of grade $d=6$.
\end{example}

The block Kronecker matrix polynomial \eqref{eq:ell--ification} is clearly not the only block Kronecker matrix polynomial whose (1,1) block $M(\lambda)$ satisfies 
\begin{equation}\label{eq:cond_BK}
(\Lambda_\eta(\lambda^\ell)^T\otimes I_m)M(\lambda)(\Lambda_{\epsilon}(\lambda^\ell)\otimes I_n) = P(\lambda).
\end{equation} 
A succinct characterization of the set of all solutions of \eqref{eq:cond_BK} for a prescribed polynomial $P(\lambda)$ of grade $d$ may be obtained by applying Theorem \ref{thm:solution_final}.
However, for block Kronecker matrix  polynomials, a different but simpler characterization is presented in Theorem \ref{thm:char_Kronecker}.

\begin{theorem}\label{thm:char_Kronecker}
Let $P(\lambda) = \sum_{i=0}^d P_i \lambda^i \in \mathbb{F}[\lambda]^{m \times n}$ be a matrix polynomial with grade $d=k\ell$, for some $k$.
Let $\epsilon + \eta + 1=k$, and let $\Sigma_{P}^{(\epsilon,\eta)}(\lambda)$ be the matrix polynomial in \eqref{eq:Sigma}.
Then, any solution of \eqref{eq:cond_BK} is of the form
\begin{align}\label{eq:charac_BK}
\begin{split}
M(\lambda) = \Sigma_{P}^{(\epsilon,\eta)}(\lambda)+
&\left( \lambda \begin{bmatrix} 0 \\ D(\lambda) \end{bmatrix}+B \right)\left(L_\epsilon(\lambda^\ell)\otimes I_n\right)\\ &+\left(L_\eta(\lambda^\ell)^T \otimes I_m\right)
\left( \lambda \begin{bmatrix} 0 & -D(\lambda) \end{bmatrix}+C\right),
\end{split}
\end{align}
for some matrices $B\in\mathbb{F}^{(\eta+1)m\times \epsilon n}$ and $C\in\mathbb{F}^{\eta m\times (\epsilon+1)n}$, and matrix polynomial $D(\lambda)\in\mathbb{F}_{\ell-2}[\lambda]^{\eta m\times \epsilon n}$.
\end{theorem}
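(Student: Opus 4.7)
The plan is to verify that \eqref{eq:charac_BK} parameterizes every grade-$\ell$ solution of \eqref{eq:cond_BK}, by checking three things: (i) any choice of parameters $(B,C,D)$ yields a matrix polynomial of grade at most $\ell$ satisfying \eqref{eq:cond_BK}, (ii) the parameterization is injective, and (iii) the parameter count matches the dimension of the solution set, so that the injective image is the whole of it.

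For step (i), substitute into \eqref{eq:cond_BK}: the $\Sigma_P^{(\epsilon,\eta)}(\lambda)$ summand contributes $P(\lambda)$ by \eqref{eq:relation_Sigma}, while the $B$- and $C$-terms are annihilated after the left-right sandwich by $\Lambda_\eta(\lambda^\ell)^T\otimes I_m$ and $\Lambda_\epsilon(\lambda^\ell)\otimes I_n$, since $L_\epsilon(\lambda^\ell)\Lambda_\epsilon(\lambda^\ell)=0$ and $\Lambda_\eta(\lambda^\ell)^T L_\eta(\lambda^\ell)^T=0$. The grade bound is the main technical step: writing $L_\epsilon(\lambda^\ell)\otimes I_n = (A\otimes I_n)+\lambda^\ell(\widetilde{A}\otimes I_n)$ with $A=[-I_\epsilon,0]$ and $\widetilde{A}=[0,I_\epsilon]$, and similarly for $L_\eta(\lambda^\ell)^T\otimes I_m$, a direct block computation shows that the would-be $\lambda^\ell$ factors in $\lambda\begin{bmatrix}0\\ D(\lambda)\end{bmatrix}(L_\epsilon(\lambda^\ell)\otimes I_n)$ and in $(L_\eta(\lambda^\ell)^T\otimes I_m)\lambda\begin{bmatrix}0 & -D(\lambda)\end{bmatrix}$ appear with opposite signs and cancel block-by-block, leaving a $D$-contribution of degree at most $\ell-1$. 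Combined with the grade-$\ell$ contributions from the constant $B$ and $C$ parts, $M$ has grade at most $\ell$.

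For step (ii), assume $(B,C,D)$ yields zero in the homogeneous part of \eqref{eq:charac_BK}. Since $B$ and $C$ are constants and $L_\epsilon(\lambda^\ell),L_\eta(\lambda^\ell)^T$ have nonzero coefficients only at $\lambda^0$ and $\lambda^\ell$, the coefficient of $\lambda^j$ for $1\leq j\leq \ell-1$ depends only on $D$. Block-wise, this condition yields the interior anti-diagonal constancy $[D_{i-1,s}]_{j-1}=[D_{i,s-1}]_{j-1}$ together with vanishing boundary equations forcing the first and last rows and columns of $[D]_{j-1}$ to be zero; propagating along anti-diagonals then forces $[D]_{j-1}=0$, so $D=0$. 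With $D=0$, the conditions at $\lambda^0$ and $\lambda^\ell$ reduce to equations on $B$ and $C$ alone; partitioning $B$ into its first block row and remainder, and $C$ into its first block column and remainder, an analogous block analysis gives $B=0$ and $C=0$.

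For step (iii), the parameter space has dimension
\[
mn\bigl[(\eta+1)\epsilon+\eta(\epsilon+1)+(\ell-1)\eta\epsilon\bigr]=mn\bigl[\eta\epsilon(\ell+1)+\eta+\epsilon\bigr],
\]
which agrees with the dimension of the affine solution set of \eqref{eq:cond_BK} given by Theorem \ref{thm:solution_final}(b) applied in the block Kronecker setting (with $m_1=\epsilon n$, $m_2=\eta m$, and the common row degree of $N_1=\Lambda_\epsilon(\lambda^\ell)^T\otimes I_n$ being $\epsilon\ell$). An injective affine map between affine spaces of equal finite dimension is bijective, so \eqref{eq:charac_BK} exhausts the solution set. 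The primary obstacle is the block-cancellation computation underlying the grade bound in step (i); once that structure is understood, the rest reduces to systematic bookkeeping of coefficient matrices at the powers $\lambda^0$, $\lambda^1,\ldots,\lambda^\ell$.
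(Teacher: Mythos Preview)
Your proof is correct and follows the same three-step architecture as the paper's proof: verify that every parameterized $M(\lambda)$ is a grade-$\ell$ solution, establish injectivity of the parameterization, and match dimensions. The substantive difference lies in the injectivity step. The paper reduces injectivity to the claim that if $P_1(\lambda),P_2(\lambda)$ have grade $\ell-1$ and
\[
P_1(\lambda)\bigl(L_\epsilon(\lambda^\ell)\otimes I_n\bigr)+\bigl(L_\eta(\lambda^\ell)^T\otimes I_m\bigr)P_2(\lambda)=0,
\]
then $P_1=P_2=0$; this is dispatched in two lines by pre-multiplying with $\Lambda_\eta(\lambda^\ell)^T\otimes I_m$, using that $L_\epsilon(\lambda^\ell)\otimes I_n$ has full normal row rank, and then invoking that the right minimal indices of $\Lambda_\eta(\lambda^\ell)^T\otimes I_m$ all equal $\ell$. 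Your approach instead carries out an explicit block-by-block coefficient analysis at each power $\lambda^j$, deriving anti-diagonal constancy relations plus boundary vanishing to force $D=0$, and then repeating a similar computation at $\lambda^0$ and $\lambda^\ell$ for $B$ and $C$. Both are valid; the paper's route is shorter and coordinate-free, while yours is more elementary and self-contained (it does not rely on minimal-index properties of the dual pair).

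A second minor difference: for the dimension count you appeal to Theorem~\ref{thm:solution_final}(b) specialized to the block Kronecker data, whereas the paper obtains the same number directly from surjectivity of the map $\Xi:M\mapsto(\Lambda_\eta(\lambda^\ell)^T\otimes I_m)M(\Lambda_\epsilon(\lambda^\ell)\otimes I_n)$ and the rank--nullity theorem. Your route works, but note the notational collision: the ``$\epsilon$'' in Theorem~\ref{thm:solution_final}(b) denotes the common row degree of $N_1$, which here equals $\epsilon\ell$, not $\epsilon$; you handled this correctly.
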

\begin{proof}
Let us introduce the linear operator 
\begin{align*}
\begin{split}
\Xi:\mathbb{F}_{\ell}[\lambda]^{(\eta+1)m\times (\epsilon+1)n}&\longrightarrow \mathbb{F}_{d}[\lambda]^{m\times n}\\
M(\lambda) &\longrightarrow  \Xi[M](\lambda)=(\Lambda_\eta(\lambda^\ell)^T\otimes I_m)M(\lambda)(\Lambda_{\epsilon}(\lambda^\ell)\otimes I_n).
\end{split}
\end{align*}
First, we notice that $M(\lambda)$ as in \eqref{eq:charac_BK} has grade equal to $\ell$.
Since $\Xi[\Sigma_{P}^{(\epsilon,\eta)}](\lambda)=P(\lambda)$, it is easily checked that any $M(\lambda)$ of the form \eqref{eq:charac_BK} satisfies $\Xi[M](\lambda)=P(\lambda)$.
Hence, the linear operator $\Xi$ is surjective and $\mathrm{dim}(\mathrm{null}(\Xi))= \epsilon \eta mn(\ell-1)+(\epsilon+1)\eta mn+(\eta+1)\epsilon mn$, which corresponds to the number of free parameters in \eqref{eq:charac_BK}.
Furthermore, the set of matrix polynomials of the form
\begin{equation}\label{eq:null}
\left( \lambda \begin{bmatrix} 0 \\ D(\lambda) \end{bmatrix}+B \right)\left(L_\epsilon(\lambda^\ell)\otimes I_n\right)+\left(L_\eta(\lambda^\ell)^T \otimes I_m\right)
\left( \lambda \begin{bmatrix} 0 & -D(\lambda) \end{bmatrix}+C\right)
\end{equation}
is contained in $\mathrm{null}(\Xi)$.
Thus, to finish the proof, it suffices to show that the mapping $(B,C,D(\lambda))\rightarrow M(\lambda)$, with $M(\lambda)$ as in \eqref{eq:null}, is injective. 
We show the injectivity of this mapping by showing that the only matrix polynomials $P_1(\lambda)$ and $P_2(\lambda)$ of grade $\ell-1$ satisfying 
\begin{equation}\label{eq:injectivity}
P_1(\lambda)\left(L_\epsilon(\lambda^\ell)\otimes I_n\right)+\left(L_\eta(\lambda^\ell)^T \otimes I_m\right)P_2(\lambda)=0
\end{equation}
are $P_1(\lambda)=0$ and $P_2(\lambda)=0$. 
Indeed, pre-multiplying \eqref{eq:injectivity} by $(\Lambda_\eta(\lambda^\ell)^T\otimes I_m)$ we obtain $(\Lambda_\eta(\lambda^\ell)^T\otimes I_m)P_1(\lambda)(L_\epsilon(\lambda^\ell)\otimes I_n)=0$, which implies $(\Lambda_\eta(\lambda^\ell)^T\otimes I_m)P_1(\lambda)=0$ because $L_\epsilon(\lambda^\ell)\otimes I_n$ has full normal row rank.
Moreover, $(\Lambda_\eta(\lambda^\ell)^T\otimes I_m)P_1(\lambda)=0$ with $P_1(\lambda)\neq 0$ contradicts the fact that all the right minimal indices of $\Lambda_\eta(\lambda^\ell)^T\otimes I_m$ are equal to $\ell$.
Therefore, $P_1(\lambda)=0$.
An analogous argument shows that $P_2(\lambda)=0$.
\end{proof}

Theorem \ref{thm:strong_Kronecker} allows one to easily check whether or not a block Kronecker matrix polynomial is a strong $\ell$-ifications of a prescribed matrix polynomial $P(\lambda)$, and Theorem  \ref{thm:char_Kronecker} allows one to easily construct infinitely many  strong $\ell$-ifications for $P(\lambda)$. 
Moreover, many of these $\ell$-ifications are companion forms different from \eqref{eq:ell--ification} or the Frobenius-like companion forms.
We illustrate this in Example \ref{ex:block_Kron}, where we construct three different block Kronecker matrix polynomials with degrees 1, 2 and 3 that are, respectively, a strong linearization, a strong quadratification, and a strong 3-ification of a given matrix polynomial of grade $d=6$.
Further, the three examples are companion forms.
\begin{example}\label{ex:block_Kron}
Let $P(\lambda)=\sum_{i=0}^6 P_i\lambda^i\in\mathbb{F}[\lambda]^{m\times n}$. 
Then, the following block Kronecker matrix polynomials
\begin{align*}
L(\lambda)=&\left[\begin{array}{cccc|cc}
\lambda P_6 & 0 & 0 & 0 & -I_m & 0 \\
\lambda P_5 & \lambda P_4 & 0 & 0 & \lambda I_m & -I_m \\
0 & \lambda P_3 & \lambda P_2 & \lambda P_1+P_0 & 0 & \lambda I_m \\ \hline
-I_n & \lambda I_n & 0 & 0 & 0 & 0 \\
0 & -I_n & \lambda I_n & 0 & 0 & 0 \\
0 & 0 & -I_n & \lambda I_n & 0 & 0 
\end{array}\right], \\
Q(\lambda)=&\left[\begin{array}{cc|c}
\lambda^2 P_6 + \lambda P_5 & \lambda P_3 + P_2& -I_m \\
\lambda^2 P_4 & \lambda P_1 + P_0 & \lambda^2 I_m \\ \hline
\phantom{\Big{(}}-I_n \phantom{\Big{(}} & \lambda^2 I_n & 0
\end{array}\right], \quad \mbox{and} \\
C(\lambda)=&\left[\begin{array}{cc}
\lambda^3 P_6 + \lambda^2 P_5 + \lambda P_4 + P_3 & \lambda^2 P_2 + \lambda P_1 + P_0 \\ \hline
\phantom{\Big{(}} -I_n \phantom{\Big{(}} & \lambda^3 I_n 
\end{array}\right]
\end{align*} 
are, by Theorem \ref{thm:strong_Kronecker}, respectively, a strong linearization, a strong quadratification, and a strong $3$-ification of the matrix polynomial $P(\lambda)$.
Notice that $L(\lambda)$, $Q(\lambda)$ and $C(\lambda)$ are companion forms for matrix polynomials of grade $6$ and size $m\times n$.
\end{example}

Companion forms may sometimes have other valuable properties in addition to those specified at the beginning of this section.
For example, one may require that the structure of the polynomials is preserved.
We construct in Example \ref{ex:sym} a symmetric companion quadratification for $n\times n$ symmetric matrix polynomials of grade $d=10$.
The construction in \eqref{eq:ex_quad} is easily generalized for any matrix polynomial with grade $d=4k+2$, for some $k$, and to other structures (palindromic, alternating, etc) by using the ideas in \cite{FJP}.
\begin{example}\label{ex:sym}
Let $P(\lambda)=\sum_{i=0}^{10} P_i\lambda^i\in\mathbb{F}[\lambda]^{n\times n}$.
Then, the following block Kronecker quadratic matrix polynomial
\begin{equation}\label{eq:ex_quad}
\left[\begin{array}{ccc|cc}
\lambda^2 P_{10}+\lambda P_9 + P_8 & \lambda P_7/2 & 0 & -I_n & 0 \\
 \lambda P_7/2 & \lambda^2  P_6+\lambda P_5 + \lambda P_4 & \lambda P_3/2 & \lambda^2 I_n & -I_n \\
 0 & \lambda P_3/2 & \lambda^2 P_2+\lambda P_1+P_0 & 0 & \lambda^2 I_n \\  \hline
\phantom{\Big{(}} -I_n \phantom{\Big{(}} & \lambda^2 I_n & 0 & 0 & 0 \\
 0 & -I_n & \lambda^2 I_n & 0 & 0 
 \end{array}\right]
\end{equation}
is, by Theorem \ref{thm:strong_Kronecker}, a strong quadratification of $P(\lambda)$.
Since \eqref{eq:ex_quad} is symmetric when $P(\lambda)$ is symmetric and it is constructed from the coefficients of $P(\lambda)$ without using any arithmetic operation, \eqref{eq:ex_quad} is a symmetric companion quadratification  for $n\times n$ symmetric matrix polynomials of grade $d=10$.
\end{example}

\section{Minimal indices, minimal bases and eigenvector recovery procedures}\label{sec:recovery}
We study in this section how to recover the eigenvectors, and the minimal bases and minimal indices of a matrix polynomial $P(\lambda)$ from those of an $\ell$-ification $\mathcal{L}(\lambda)$ based on strong block minimal bases matrix polynomials. 
When $\mathcal{L}(\lambda)$ is a block Kronecker matrix polynomial, we will see that such eigenvectors and minimal bases recovery procedures are very simple.
More precisely, block Kronecker matrix polynomials allow us to obtain the eigenvectors and the minimal bases of $P(\lambda)$ from those of $\mathcal{L}(\lambda)$ without any extra computational cost.

\subsection{Minimal indices}

It is known that strong $\ell$-ifications may change the minimal indices of a singular matrix polynomial $P(\lambda)$ almost arbitrarily  \cite[Theorem 4.10]{FFP2}. 
For this reason, it is important to be able to recover the minimal indices of $P(\lambda)$ from those of an $\ell$-ification $\mathcal{L}(\lambda)$.
The goal of this section is to show that the minimal indices of the singular matrix polynomial \eqref{eq:Qpolinminbaslin} are related with those of the strong block minimal bases matrix polynomial \eqref{eq:minbas_ell} via uniform shifts.

The following lemma is key to prove Theorem \ref{thm:minimal_indices}, which is the main result of this section.
\begin{lemma} \label{lemm:techindminbaslin}
Let $\mathcal{L}(\lambda)$ be a strong block minimal bases matrix polynomial as in \eqref{eq:minbas_ell}, let $N_1(\lambda)$ be a minimal basis dual to $K_1 (\lambda)$, let $N_2(\lambda)$ be a minimal basis dual to $K_2 (\lambda)$, let $Q(\lambda)$ be the matrix polynomial  in \eqref{eq:Qpolinminbaslin}, and let $\widehat{N}_2 (\lambda)$ be the matrix polynomial appearing in \eqref{eq:twounimodembed}. 
Then the following hold:
\begin{enumerate}
\item[\rm (a)] If $h(\lambda) \in \mathcal{N}_r (Q)$, then
\begin{equation} \label{eq:defz}
z(\lambda) :=
\begin{bmatrix}
N_1 (\lambda)^T \\-\widehat{N}_2 (\lambda) M(\lambda) N_1 (\lambda)^T
\end{bmatrix} h(\lambda) \, \in \mathcal{N}_r (\mathcal{L})\, .
\end{equation}
Moreover, if $0 \ne h(\lambda) \in \mathcal{N}_r (Q)$ is a vector polynomial, then $z(\lambda)$ is also a vector polynomial and
\begin{equation} \label{eq:degreeshift1}
\deg (z (\lambda)) = \deg( N_1 (\lambda)^T \, h(\lambda)) = \deg( N_1 (\lambda)) +  \deg(h(\lambda)).
\end{equation}
\item[\rm (b)] If $\{h_1(\lambda), \ldots, h_p(\lambda)\}$ is a right minimal basis of $Q(\lambda)$, then
\[
\left\{\begin{bmatrix}
N_1 (\lambda)^T \\-\widehat{N}_2 (\lambda) M(\lambda) N_1 (\lambda)^T
\end{bmatrix} h_1(\lambda), \ldots , \begin{bmatrix}
N_1 (\lambda)^T \\-\widehat{N}_2 (\lambda) M(\lambda) N_1 (\lambda)^T
\end{bmatrix} h_p(\lambda) \right\}
\]
is a right minimal basis of $\mathcal{L}(\lambda)$.
\end{enumerate}
\end{lemma}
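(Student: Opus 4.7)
My plan for Lemma \ref{lemm:techindminbaslin} is to verify part (a) by direct computation, then use the structural properties of $\mathcal{L}(\lambda)$ to control the degrees, and finally build part (b) on top of part (a) using the minimal basis characterization in Theorem \ref{thm:minimal_basis}.

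For part (a), I will check directly that $\mathcal{L}(\lambda) z(\lambda) = 0$. The second block of the product is $K_1(\lambda) N_1(\lambda)^T h(\lambda) = 0$, which vanishes by the duality $K_1 N_1^T = 0$. For the first block, I need $M(\lambda) N_1(\lambda)^T h(\lambda) = K_2(\lambda)^T \widehat{N}_2(\lambda) M(\lambda) N_1(\lambda)^T h(\lambda)$; this follows from the identity $K_2(\lambda)^T \widehat{N}_2(\lambda) + \widehat{K}_2(\lambda)^T N_2(\lambda) = I$, which is a direct consequence of $U_2(\lambda)^{-1} U_2(\lambda) = I$ in \eqref{eq:twounimodembed}, together with $N_2(\lambda) M(\lambda) N_1(\lambda)^T h(\lambda) = Q(\lambda) h(\lambda) = 0$. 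For the degree equality \eqref{eq:degreeshift1}, the formula $\deg(N_1(\lambda)^T h(\lambda)) = \deg(N_1(\lambda)) + \deg(h(\lambda))$ follows from the fact that $N_1(\lambda)$ is a minimal basis with constant row degrees, hence its highest row degree coefficient matrix has full row rank and cannot annihilate the leading coefficient of $h(\lambda)$. To then show $\deg(z(\lambda))$ equals this same number (rather than being larger because of the second block), I will use the identity $K_2(\lambda)^T z_2(\lambda) = -M(\lambda) z_1(\lambda)$ coming from $\mathcal{L}(\lambda) z(\lambda) = 0$: since $K_2(\lambda)$ has all row degrees equal to $\ell$ and full row rank leading coefficient matrix, its transpose has full column rank leading coefficient matrix, so $\deg(K_2^T z_2) = \ell + \deg(z_2)$, forcing $\deg(z_2) \le \deg(z_1)$.

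For part (b), I will apply Theorem \ref{thm:minimal_basis} to the matrix polynomial $Z(\lambda)$ whose columns are $z_1(\lambda), \ldots, z_p(\lambda)$. First, to check full column rank at every $\lambda_0 \in \overline{\mathbb{F}}$: if $Z(\lambda_0) v = 0$, then in particular $N_1(\lambda_0)^T H(\lambda_0) v = 0$ (where $H(\lambda)$ has columns $h_i(\lambda)$); since $N_1(\lambda_0)$ has full row rank (minimal basis) and $H(\lambda_0)$ has full column rank (minimal basis of columns), we conclude $v = 0$. Second, for column reducedness: by part (a), $\deg(z_i) = \deg(N_1) + \deg(h_i)$, and the leading column coefficient of $z_i$ has top block $N_{1,\epsilon}^T (h_i)_{\deg(h_i)}$ (where $\epsilon = \deg(N_1)$), so the matrix of leading column coefficients of $Z(\lambda)$ has top block $N_{1,\epsilon}^T$ times the highest column degree coefficient matrix of $H(\lambda)$. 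Both factors have full column rank by minimality of $N_1(\lambda)$ and of $\{h_i(\lambda)\}$, so the product does too. Together with the dimension equality $\dim \mathcal{N}_r(\mathcal{L}) = \dim \mathcal{N}_r(Q) = p$ (which follows from part (a) of Theorem \ref{thm:ell-ification} because $\mathcal{L}$ is an $\ell$-ification of $Q$ and thus they share the number of right minimal indices), linear independence of the $z_i(\lambda)$ over $\mathbb{F}(\lambda)$ upgrades to spanning, so $Z(\lambda)$ is a minimal basis of $\mathcal{N}_r(\mathcal{L})$.

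The most delicate step is the degree bookkeeping in part (a), because $\widehat{N}_2(\lambda) M(\lambda) N_1(\lambda)^T$ a priori has degree strictly exceeding $\deg(N_1)$; the payoff of the construction is that the relation $\mathcal{L}(\lambda) z(\lambda) = 0$ plus the constant row degrees of $K_2(\lambda)$ force the lower block of $z(\lambda)$ not to raise the overall degree. Once this is in place, the rest reduces to applying Theorem \ref{thm:minimal_basis} blockwise, and part (b) will fall out by combining this with the dimension count given by Theorem \ref{thm:ell-ification}(a).
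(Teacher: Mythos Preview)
Your proof is correct and follows essentially the same approach as the paper: the degree bookkeeping in part (a) via $K_2(\lambda)^T z_2 = -M(\lambda) z_1$ and the constant-row-degree property of $K_2(\lambda)$ is exactly the paper's argument, and part (b) is likewise handled via Theorem \ref{thm:minimal_basis} applied to the top block $N_1(\lambda)^T H(\lambda)$ together with the dimension count $\dim\mathcal{N}_r(\mathcal{L})=\dim\mathcal{N}_r(Q)$. The only minor difference is in how you establish $\mathcal{L}(\lambda)z(\lambda)=0$: you verify it directly from the identity $K_2(\lambda)^T\widehat{N}_2(\lambda)+\widehat{K}_2(\lambda)^T N_2(\lambda)=I$ (coming from $U_2^{-1}U_2=I$), whereas the paper extracts it from the unimodular equivalence \eqref{eq:XYZminlin} already established in the proof of Theorem \ref{thm:ell-ification}; both routes use the same underlying relations, so this is a packaging difference rather than a substantive one.
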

\begin{proof}
{\bf Proof of part (a)}. Notice that the matrix $X(\lambda)$ in \eqref{eq:XYZminlin} is given by $X(\lambda) = \widehat{N}_2 (\lambda) M(\lambda) N_1 (\lambda)^T$.
 Then, from \eqref{eq:XYZminlin}, we get 
\begin{equation}\label{eq:aux1_techindminbaslin}
(U_2(\lambda)^{-T} \oplus I_{m_1}) \, \mathcal{L}(\lambda) \, (U_1(\lambda)^{-1} \oplus I_{m_2})
\begin{bmatrix}
0 \\ I_n \\ -X(\lambda)
\end{bmatrix}
= \begin{bmatrix}
0 \\ Q(\lambda) \\ 0
\end{bmatrix}.
\end{equation}
Then, by using the structure of $U_1(\lambda)^{-1}$ in \eqref{eq:twounimodembed}, we obtain
\begin{equation}\label{eq:aux2_techindminbaslin}
(U_1(\lambda)^{-1}\oplus I_{m_1})
\begin{bmatrix}
0 \\ I_n \\ -X(\lambda)
\end{bmatrix}=
\begin{bmatrix}
\widehat{N}_1(\lambda)^T & N_1(\lambda)^T & 0 \\
0 & 0 & I_{m_1} 
\end{bmatrix}
\begin{bmatrix}
0 \\ I_n \\ -X(\lambda)
\end{bmatrix}=
\begin{bmatrix}
N_1(\lambda)^T \\ -X(\lambda)
\end{bmatrix}.
\end{equation}
Finally, from \eqref{eq:aux1_techindminbaslin} and \eqref{eq:aux2_techindminbaslin}, we obtain
\begin{equation} \label{eq:ins3forrecovery}
(U_2(\lambda)^{-T} \oplus I_{m_1}) \, \mathcal{L}(\lambda) \,
\begin{bmatrix}
N_1(\lambda)^T \\ -X(\lambda)
\end{bmatrix}
= \begin{bmatrix}
0 \\ Q(\lambda) \\ 0
\end{bmatrix}.
\end{equation}
The above equation implies that $z(\lambda) \in \mathcal{N}_r (\mathcal{L})$ if $h(\lambda) \in \mathcal{N}_r (Q)$.
Also notice that if $h(\lambda)$ is a vector polynomial so is $z(\lambda)$, because $N_1(\lambda)$ and $X(\lambda)$ are matrix polynomials.

To  finish the proof of part (a), it remains to prove the degree shifting property \eqref{eq:degreeshift1}.
To this aim, notice that 
\begin{equation} \label{eq:auxdegreeshift1}
\deg( N_1 (\lambda)^T \, g(\lambda)) = \deg( N_1 (\lambda)) +  \deg(g(\lambda)) \, ,
\end{equation}
for any vector polynomial $g(\lambda) \ne 0$, and that
\begin{equation} \label{eq:auxdegreeshift2}
\deg( K_2 (\lambda)^T \, y(\lambda)) = \deg( K_2 (\lambda)) +  \deg(y(\lambda)) = \ell +  \deg(y(\lambda)) \, ,
\end{equation}
for any vector polynomial $y(\lambda) \ne 0$, since the minimal bases $N_1 (\lambda)$ and $K_2(\lambda)$ both have  constant row degrees and  their highest degree coefficients have full row rank. 
Then, observe that
\begin{equation} \label{eq:maxdegrees}
\deg (z(\lambda)) = \max \{\deg( N_1 (\lambda)^T h(\lambda)) \, , \, \deg (X(\lambda) h(\lambda))\} \, .
\end{equation}
Thus, \eqref{eq:degreeshift1} follows trivially if $X(\lambda) h(\lambda) = 0$.
Finally, assume that $X(\lambda) h(\lambda) \ne 0$ and $h(\lambda) \in \mathcal{N}_r (Q)$. 
Then,  from \eqref{eq:minbas_ell} and \eqref{eq:defz},  and using 
\[
\mathcal{L}(\lambda)z(\lambda) = 
\begin{bmatrix}
M(\lambda) & K_2(\lambda)^T \\ K_1(\lambda) &0
\end{bmatrix}
\begin{bmatrix}
N_1(\lambda)^Th(\lambda) \\ -X(\lambda)h(\lambda)
\end{bmatrix}=0,
\]
we get
\[
M(\lambda) N_1 (\lambda)^T h(\lambda) = K_2(\lambda)^T X(\lambda) h(\lambda).
\]
Taking degrees on both sides of the above equality and using \eqref{eq:auxdegreeshift2}, we obtain
\begin{align*}
\ell + \deg (X(\lambda) h(\lambda)) & = \deg (K_2(\lambda)^T X(\lambda) h(\lambda)) \leq \deg(M(\lambda)) + \deg( N_1 (\lambda)^T h(\lambda)) \\
& \leq \ell + \deg( N_1 (\lambda)^T h(\lambda)),
\end{align*}
and, so, $\deg (X(\lambda) h(\lambda)) \leq \deg( N_1 (\lambda)^T h(\lambda))$.
 This, together with \eqref{eq:auxdegreeshift1} and \eqref{eq:maxdegrees}, proves the degree shifting formula  \eqref{eq:degreeshift1}.

\smallskip

\noindent {\bf Proof of part (b)}. Let us introduce the following matrix polynomial
\[ B(\lambda):=
\begin{bmatrix}
N_1 (\lambda)^T \\-\widehat{N}_2 (\lambda) M(\lambda) N_1 (\lambda)^T
\end{bmatrix} 
\begin{bmatrix} h_1(\lambda) & \cdots & h_p (\lambda)\end{bmatrix}.
\]
First, we prove that the columns of $B(\lambda)$ are a minimal basis of the rational subspace they span by applying \cite[Theorem 2.4]{FFSP}. 
Since $N_1 (\lambda)^T $ and $[h_1(\lambda) \cdots h_p (\lambda)]$ are minimal bases, note that for all $\lambda_0 \in \overline{\mathbb{F}}$, the matrices $N_1 (\lambda_0)^T$ and $[h_1(\lambda_0) \cdots h_p (\lambda_0)]$ have both full column rank (recall \cite[Theorem 2.4]{IndexSum}).
Thus, the matrix $B(\lambda_0)$ has full column rank for all $\lambda_0 \in \overline{\mathbb{F}}$.
 Next, notice that \eqref{eq:degreeshift1} implies that the highest column degree coefficient matrix $B_{hc}$ of $B(\lambda)$ has as a submatrix the highest column degree coefficient matrix $C_{hc}$ of $C(\lambda) := N_1 (\lambda)^T [h_1(\lambda) \cdots h_p (\lambda)]$.
  But since the column degrees of $N_1 (\lambda)^T$ are all equal, we have that $C_{hc}$ is the product of the highest column degree coefficient matrices of $N_1 (\lambda)^T$ and  $[h_1(\lambda) \cdots h_p (\lambda)]$, which have both full column rank because the columns of both matrices are minimal bases. 
  So $C_{hc}$ has full column rank, as well as $B_{hc}$. 
  This implies that the columns of $B(\lambda)$ are a minimal basis of a rational subspace.
  Let us denote this subspace by $\mathcal{S}$. 
Then, by part (a), we get $\mathcal{S} \subseteq \mathcal{N}_r (\mathcal{L})$.
   Finally, since $\mathcal{L}(\lambda)$ is a strong $\ell$-ification of $Q(\lambda)$ by part (b) in Theorem \ref{thm:ell-ification}, we get from \cite[Theorem 4.1]{IndexSum} that $\mathcal{S} = \mathcal{N}_r (\mathcal{L})$ because $\dim(\mathcal{N}_r (Q)) = \dim (\mathcal{N}_r (\mathcal{L}))$.
\end{proof}

As a corollary of Lemma \ref{lemm:techindminbaslin}, we obtain Theorem \ref{thm:minimal_indices}, which shows that the minimal indices of the strong block minimal bases  matrix polynomial \eqref{eq:minbas_ell} are related with those of the polynomial $Q(\lambda)$  in \eqref{eq:Qpolinminbaslin} via uniform shifts.
\begin{theorem}\label{thm:minimal_indices}
Let $\mathcal{L}(\lambda)$ be a strong block minimal bases degree-$\ell$ matrix polynomial as in  \eqref{eq:minbas_ell}, let $N_1(\lambda)$ be a minimal basis dual to $K_1 (\lambda)$, let $N_2(\lambda)$ be a minimal basis dual to $K_2 (\lambda)$, and let $Q(\lambda)$ be the matrix polynomial in \eqref{eq:Qpolinminbaslin}. 
Then, the following statements hold:
\begin{enumerate}
\item[\rm (a)] If $0 \leq \epsilon_1 \leq \epsilon_2 \leq \cdots \leq \epsilon_p$ are the right minimal indices of $Q(\lambda)$, then
\[
\epsilon_1 + \deg(N_1(\lambda)) \leq \epsilon_2 + \deg(N_1(\lambda)) \leq \cdots \leq \epsilon_p +  \deg(N_1(\lambda))
\]
are the right minimal indices of $\mathcal{L}(\lambda)$.
\item[\rm (b)] If $0 \leq \eta_1 \leq \eta_2 \leq \cdots \leq \eta_q$ are the left minimal indices of $Q(\lambda)$, then
\[
\eta_1 + \deg(N_2(\lambda)) \leq \eta_2 + \deg(N_2(\lambda)) \leq \cdots \leq \eta_q +  \deg(N_2(\lambda))
\]
are the left minimal indices of $\mathcal{L}(\lambda)$.
\end{enumerate}
\end{theorem}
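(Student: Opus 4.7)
The plan is to derive part (a) directly from Lemma \ref{lemm:techindminbaslin}, and then obtain part (b) by transposition using Lemma \ref{lemma:transpose}. Essentially all of the heavy lifting has already been carried out in Lemma \ref{lemm:techindminbaslin}: part (b) of that lemma produces an explicit right minimal basis of $\mathcal{L}(\lambda)$ from any given right minimal basis of $Q(\lambda)$, and the degree shifting identity \eqref{eq:degreeshift1} in part (a) of the lemma tells us precisely how the degrees transform.

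For part (a), I would start by taking a right minimal basis $\{h_1(\lambda),\ldots,h_p(\lambda)\}$ of $Q(\lambda)$ whose vectors are ordered so that $\deg(h_i(\lambda)) = \epsilon_i$, which is possible by definition of the right minimal indices. By Lemma \ref{lemm:techindminbaslin}(b), the set
\[
\left\{\, z_i(\lambda) := \begin{bmatrix} N_1(\lambda)^T \\ -\widehat{N}_2(\lambda) M(\lambda) N_1(\lambda)^T \end{bmatrix} h_i(\lambda) \;:\; i=1,\ldots,p \right\}
\]
is a right minimal basis of $\mathcal{L}(\lambda)$. Since each $h_i(\lambda)$ is a nonzero vector polynomial in $\mathcal{N}_r(Q)$, the degree shifting identity \eqref{eq:degreeshift1} yields
\[
\deg(z_i(\lambda)) = \deg(N_1(\lambda)) + \deg(h_i(\lambda)) = \deg(N_1(\lambda)) + \epsilon_i.
\]
Because the list of degrees of any minimal basis of $\mathcal{N}_r(\mathcal{L})$ equals, up to reordering, the list of right minimal indices of $\mathcal{L}(\lambda)$, the claim of part (a) follows immediately, and the shifted indices inherit the same weakly increasing ordering.

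For part (b), the plan is to reduce to part (a) via transposition. By Lemma \ref{lemma:transpose}, the matrix polynomial $\mathcal{L}(\lambda)^T$ is again a strong block minimal bases degree-$\ell$ matrix polynomial, now with the roles of $(K_1(\lambda),N_1(\lambda))$ and $(K_2(\lambda),N_2(\lambda))$ interchanged, and it is a strong $\ell$-ification of $Q(\lambda)^T$. Since the left minimal indices of any matrix polynomial are the right minimal indices of its transpose, applying part (a) to $\mathcal{L}(\lambda)^T$ and $Q(\lambda)^T$ yields that the left minimal indices of $\mathcal{L}(\lambda)$ are obtained from those of $Q(\lambda)$ by a uniform shift of $\deg(N_2(\lambda))$, which is exactly the statement of part (b).

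I do not anticipate a significant obstacle here, since Lemma \ref{lemm:techindminbaslin} already contains the delicate argument (in particular, the verification that the construction preserves the minimal basis property and the degree count). The only point that deserves a line of care is confirming that the natural ordering is preserved by the shift, but this is trivial because a constant is being added to every degree. Lemma \ref{lemma:transpose} cleanly encapsulates the symmetry needed to avoid repeating the whole argument for left minimal indices.
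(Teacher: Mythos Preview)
Your proposal is correct and follows essentially the same approach as the paper: part (a) is obtained by combining Lemma \ref{lemm:techindminbaslin}(b) with the degree-shift identity \eqref{eq:degreeshift1}, and part (b) by applying part (a) to $\mathcal{L}(\lambda)^T$ and $Q(\lambda)^T$ via Lemma \ref{lemma:transpose}. The only difference is that you spell out explicitly the step of reading off the minimal indices from the degrees of the constructed minimal basis, which the paper leaves implicit.
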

\begin{proof} Part (a) follows from combining part (b) in Lemma \ref{lemm:techindminbaslin} with equation \eqref{eq:degreeshift1}. 
To prove part (b), we recall that the left minimal indices of $Q(\lambda)$ are the right minimal indices of $Q(\lambda)^T$.
Additionally, from Lemma \ref{lemma:transpose} and its proof, we have that $\mathcal{L}(\lambda)^T$ is also a strong block minimal bases degree-$\ell$ matrix polynomial (with the roles of $(K_1(\lambda), N_1(\lambda))$ and $(K_2(\lambda),N_2 (\lambda))$ interchanged) that is a strong $\ell$-ification of $Q(\lambda)^T$.
Hence, part (b) follows from applying part (a) to the right minimal indices of $\mathcal{L}(\lambda)^T$ and $Q(\lambda)^T$.
\end{proof}

Clearly, one can apply Theorem \ref{thm:minimal_indices} to a block Kronecker matrix polynomial as in \eqref{eq:Kronecker_ell}.
\begin{theorem}
Let $P(\lambda) = \sum_{i=0}^d P_i \lambda^i \in \mathbb{F}[\lambda]^{n\times n}$ be a singular matrix polynomial with $d=k\ell$, for some $k$.
Let $\mathcal{L}(\lambda)$ be an $(\epsilon,n,\eta,n)$-block Kronecker degree-$\ell$ matrix polynomial as in \eqref{eq:Kronecker_ell} with $k = \epsilon + \eta + 1$ such that
$P(\lambda) = (\Lambda_\eta(\lambda^\ell)^T\otimes I_m)M(\lambda)(\Lambda_{\epsilon}(\lambda^\ell)\otimes I_n)$, where $\Lambda_k (\lambda)$ is the vector polynomial in \eqref{eq:Lambda}. 
Then the following hold:
\begin{enumerate}
\item[\rm (a)] If $0 \leq \epsilon_1 \leq \epsilon_2 \leq \cdots \leq \epsilon_p$ are the right minimal indices of $P(\lambda)$, then
\[
\epsilon_1 + \epsilon\ell \leq \epsilon_2 + \epsilon\ell \leq \cdots \leq \epsilon_p +  \epsilon\ell
\]
are the right minimal indices of $\mathcal{L}(\lambda)$.
\item[\rm (b)] If $0 \leq \eta_1 \leq \eta_2 \leq \cdots \leq \eta_q$ are the left minimal indices of $P(\lambda)$, then
\[
\eta_1 + \eta\ell \leq \eta_2 + \eta\ell \leq \cdots \leq \eta_q +  \eta\ell
\]
are the left minimal indices of $\mathcal{L}(\lambda)$.
\end{enumerate}
\end{theorem}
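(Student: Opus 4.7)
The plan is to recognize the stated theorem as a direct corollary of Theorem \ref{thm:minimal_indices} applied to the specific pairs of dual minimal bases that appear in a block Kronecker matrix polynomial. Specifically, for the $(\epsilon,n,\eta,n)$-block Kronecker degree-$\ell$ matrix polynomial $\mathcal{L}(\lambda)$ in \eqref{eq:Kronecker_ell}, the two ``minimal basis blocks'' are $K_1(\lambda) = L_\epsilon(\lambda^\ell) \otimes I_n$ and $K_2(\lambda) = L_\eta(\lambda^\ell) \otimes I_n$. By Lemma \ref{lemma:L-Lamb}, these admit the dual minimal bases $N_1(\lambda) = \Lambda_\epsilon(\lambda^\ell)^T \otimes I_n$ and $N_2(\lambda) = \Lambda_\eta(\lambda^\ell)^T \otimes I_n$, respectively.

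Next, I would record the two facts that feed into Theorem \ref{thm:minimal_indices}. First, the hypothesis $P(\lambda) = (\Lambda_\eta(\lambda^\ell)^T\otimes I_n)M(\lambda)(\Lambda_{\epsilon}(\lambda^\ell)\otimes I_n)$ means that $P(\lambda)$ is precisely the matrix polynomial $Q(\lambda) = N_2(\lambda)M(\lambda)N_1(\lambda)^T$ from \eqref{eq:Qpolinminbaslin} associated with $\mathcal{L}(\lambda)$. Second, a direct inspection of $\Lambda_k(\lambda^\ell)^T \otimes I_p$ shows that its degree equals $k\ell$, so $\deg(N_1(\lambda)) = \epsilon\ell$ and $\deg(N_2(\lambda)) = \eta\ell$.

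With these identifications in place, parts (a) and (b) of the theorem follow immediately by substituting $\deg(N_1(\lambda)) = \epsilon\ell$ and $\deg(N_2(\lambda)) = \eta\ell$ into the corresponding parts of Theorem \ref{thm:minimal_indices}. There is no genuine obstacle in the proof — the only mild bookkeeping point is to verify that $\mathcal{L}(\lambda)$ is indeed a strong block minimal bases degree-$\ell$ matrix polynomial (so that Theorem \ref{thm:minimal_indices} applies); this was already observed in the discussion preceding Theorem \ref{thm:strong_Kronecker}, where the $K_i(\lambda)$ and their duals were shown to have constant row degrees ($\ell$ for the $K_i$, and $\epsilon\ell$ or $\eta\ell$ for the duals). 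Thus the proof reduces to a one-line application of Theorem \ref{thm:minimal_indices} together with the degree computation above.
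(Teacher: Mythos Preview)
Your proposal is correct and matches the paper's own treatment: the paper presents this theorem as an immediate application of Theorem \ref{thm:minimal_indices} to block Kronecker matrix polynomials, with the only computation being $\deg(N_1(\lambda)) = \epsilon\ell$ and $\deg(N_2(\lambda)) = \eta\ell$, exactly as you outline.
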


\subsection{Minimal bases recovery procedures}

We discuss in this section how to recover the minimal bases of the singular matrix polynomial \eqref{eq:Qpolinminbaslin} from those of the singular strong block minimal bases matrix polynomial  \eqref{eq:minbas_ell}.
In particular, we show that such recovery procedures allow us to obtain  the minimal bases of the polynomial  \eqref{eq:Qpolinminbaslin} from those of any block Kronecker matrix polynomial without any extra computational cost.


The first result is Lemma \ref{lemm:SECONDtechindminbaslin}, valid for any strong block minimal bases matrix polynomial, which completes Lemma \ref{lemm:techindminbaslin} and uses the notation introduced in Section \ref{sec:min_ell-ifications}.
Lemma \ref{lemm:SECONDtechindminbaslin} gives abstract formulas for the minimal bases of block minimal bases matrix polynomials \eqref{eq:minbas_ell} in terms of those of the matrix polynomial \eqref{eq:Qpolinminbaslin}.
\begin{lemma} \label{lemm:SECONDtechindminbaslin}
Let $\mathcal{L}(\lambda)$ be a strong block minimal bases degree-$\ell$ matrix polynomial as in \eqref{eq:minbas_ell}, let $N_1(\lambda)$ be a minimal basis dual to $K_1 (\lambda)$, let $N_2(\lambda)$ be a minimal basis dual to $K_2 (\lambda)$, let $Q(\lambda)$ be the matrix polynomial in \eqref{eq:Qpolinminbaslin}, and let $\widehat{N}_1 (\lambda)$ and $\widehat{N}_2 (\lambda)$ be the matrices appearing in \eqref{eq:twounimodembed}. 
Then, the following statements hold.
\begin{enumerate}
\item[\rm (a)] Any right minimal basis of $\mathcal{L}(\lambda)$ has the form
\[
\left\{\begin{bmatrix}
N_1 (\lambda)^T \\-\widehat{N}_2 (\lambda) M(\lambda) N_1 (\lambda)^T
\end{bmatrix} h_1(\lambda), \ldots , \begin{bmatrix}
N_1 (\lambda)^T \\-\widehat{N}_2 (\lambda) M(\lambda) N_1 (\lambda)^T
\end{bmatrix} h_p(\lambda) \right\},
\]
where $\{h_1(\lambda), \ldots, h_p(\lambda)\}$ is some right minimal basis of $Q(\lambda)$.

\item[\rm (b)] Any left minimal basis of $\mathcal{L}(\lambda)$ has the form
\end{enumerate}
\[
\left\{g_1(\lambda)^T [
N_2 (\lambda) , -  N_2 (\lambda) M(\lambda)  \widehat{N}_1 (\lambda)^T
], \ldots ,  g_q(\lambda)^T [
N_2 (\lambda) , -  N_2 (\lambda) M(\lambda)  \widehat{N}_1 (\lambda)^T
] \right\},
\]
\phantom{aaaaaai} where $\{g_1(\lambda)^T, \ldots, g_q(\lambda)^T\}$ is some left minimal basis of $Q(\lambda)$.
\end{lemma}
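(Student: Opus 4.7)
The plan is to combine the unimodular block reduction from \eqref{eq:XYZminlin} with the degree-shifting identity \eqref{eq:degreeshift1} already obtained in Lemma \ref{lemm:techindminbaslin}. The key observation is that \eqref{eq:XYZminlin} sets up a bijection between $\mathcal{N}_r(\mathcal{L})$ and $\mathcal{N}_r(Q)$, and the degree shift turns the minimality of bases of $\mathcal{N}_r(\mathcal{L})$ into minimality of the corresponding bases of $\mathcal{N}_r(Q)$.

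First I would establish part (a). Reading \eqref{eq:XYZminlin} column-wise, a vector polynomial $w(\lambda)=(a(\lambda),b(\lambda),c(\lambda))^T$ lies in the right nullspace of the anti-triangular form if and only if $a(\lambda)=0$ and $c(\lambda)=-X(\lambda)b(\lambda)$ with $b(\lambda)\in\mathcal{N}_r(Q)$, where $X(\lambda)=\widehat{N}_2(\lambda)M(\lambda)N_1(\lambda)^T$. Undoing the right unimodular factor $U_1(\lambda)^{-1}\oplus I_{m_2}$ as in \eqref{eq:aux2_techindminbaslin}, every vector in $\mathcal{N}_r(\mathcal{L})$ takes the form
\[
z(\lambda)=\begin{bmatrix}N_1(\lambda)^T\\[2pt] -\widehat{N}_2(\lambda)M(\lambda)N_1(\lambda)^T\end{bmatrix}h(\lambda),\qquad h(\lambda)\in\mathcal{N}_r(Q),
\]
and this assignment $h\mapsto z$ is an $\mathbb{F}(\lambda)$-linear bijection $\mathcal{N}_r(Q)\to\mathcal{N}_r(\mathcal{L})$. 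Now, given an arbitrary right minimal basis $\{z_1(\lambda),\ldots,z_p(\lambda)\}$ of $\mathcal{L}(\lambda)$, define $h_i(\lambda)\in\mathcal{N}_r(Q)$ to be its preimage under the bijection. Linear independence of $\{z_i\}$ transfers immediately to $\{h_i\}$, and since $\dim\mathcal{N}_r(Q)=\dim\mathcal{N}_r(\mathcal{L})=p$ (Theorem \ref{thm:ell-ification}(b) and \cite[Theorem 4.1]{IndexSum}), $\{h_1,\ldots,h_p\}$ is a polynomial basis of $\mathcal{N}_r(Q)$.

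The remaining step is to upgrade this basis to a minimal basis, and this is where the degree-shifting formula \eqref{eq:degreeshift1} plays the decisive role. By that formula, $\deg(z_i(\lambda))=\deg(N_1(\lambda))+\deg(h_i(\lambda))$ for each $i$, so
\[
\sum_{i=1}^p\deg(h_i(\lambda))=\sum_{i=1}^p\deg(z_i(\lambda))-p\deg(N_1(\lambda)).
\]
If $\{h'_1,\ldots,h'_p\}$ were a polynomial basis of $\mathcal{N}_r(Q)$ with strictly smaller order, then its image $\{z'_i\}$ under the bijection would be a polynomial basis of $\mathcal{N}_r(\mathcal{L})$ whose order, again by \eqref{eq:degreeshift1}, would be strictly smaller than that of $\{z_i\}$, contradicting the minimality of $\{z_i\}$. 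Hence $\{h_1,\ldots,h_p\}$ is a right minimal basis of $Q(\lambda)$, proving (a).

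Part (b) follows by transposition. By Lemma \ref{lemma:transpose}, $\mathcal{L}(\lambda)^T$ is a strong block minimal bases degree-$\ell$ matrix polynomial, with the roles of $(K_1,N_1)$ and $(K_2,N_2)$ interchanged, and it is a strong $\ell$-ification of $Q(\lambda)^T$. Left minimal bases of $\mathcal{L}(\lambda)$ are, by transposition, right minimal bases of $\mathcal{L}(\lambda)^T$, and similarly for $Q(\lambda)$. Applying part (a) to $\mathcal{L}(\lambda)^T$, any right minimal basis of $\mathcal{L}(\lambda)^T$ has the form prescribed in (a) with $N_2$ playing the role of $N_1$ and $\widehat{N}_1$ playing the role of $\widehat{N}_2$; transposing the resulting vectors yields exactly the stated form in (b). The main conceptual hurdle is ensuring that the bijection preserves bases together with their minimality, and this is resolved entirely by the uniform degree shift in \eqref{eq:degreeshift1}, which is available precisely because $N_1(\lambda)$ (respectively $N_2(\lambda)$) has all row degrees equal.
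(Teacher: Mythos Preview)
Your argument is correct, and it takes a somewhat different route from the paper's for the crucial minimality step in part~(a). The paper fixes one right minimal basis $R(\lambda)$ of $Q(\lambda)$, pushes it forward to a particular minimal basis $S(\lambda)$ of $\mathcal{L}(\lambda)$ via Lemma~\ref{lemm:techindminbaslin}(b), writes an \emph{arbitrary} minimal basis of $\mathcal{L}(\lambda)$ as $\{S(\lambda)v_1(\lambda),\ldots,S(\lambda)v_p(\lambda)\}$ using Forney's Main Theorem (Part~4), and then checks that $\{R(\lambda)v_i(\lambda)\}$ is minimal by matching each individual degree against the known right minimal indices of $Q(\lambda)$ from Theorem~\ref{thm:minimal_indices}. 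You instead invert the bijection directly on an arbitrary minimal basis $\{z_i\}$ of $\mathcal{L}(\lambda)$ and certify minimality of $\{h_i\}$ by a global order-comparison: a strictly smaller-order basis of $\mathcal{N}_r(Q)$ would, after applying the degree shift \eqref{eq:degreeshift1}, yield a strictly smaller-order basis of $\mathcal{N}_r(\mathcal{L})$. This is more elementary---it bypasses both Forney's Part~4 and Theorem~\ref{thm:minimal_indices}---and is a cleaner way to see why minimality transfers. The paper's approach, in exchange, gives you the individual degrees $\deg(R(\lambda)v_i(\lambda))=\epsilon_i$ explicitly, which is slightly more information.

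One small point worth making explicit: you should say why each preimage $h_i(\lambda)$ is a \emph{polynomial} vector and not merely rational. This follows at once from your own setup, since $h_i$ is the middle block of $(U_1(\lambda)\oplus I_{m_2})\,z_i(\lambda)$ and $U_1(\lambda)$ is polynomial; but as written you pass silently from ``preimage under an $\mathbb{F}(\lambda)$-linear bijection'' to ``polynomial basis'', and a reader may pause there. Part~(b) by transposition via Lemma~\ref{lemma:transpose} matches the paper exactly.
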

\begin{proof} We only prove part (a), since part (b) follows from applying part (a) to $\mathcal{L}(\lambda)^T$ and $Q(\lambda)^T$ and then taking transposes, as in the proof of part (b) in Theorem \ref{thm:minimal_indices}.

According to part (b) in Lemma \ref{lemm:techindminbaslin}, if the $p$ columns of $R(\lambda)$ are a particular right minimal basis of $Q(\lambda)$, then the columns of
\begin{equation} \label{eq:Ssec7}
S(\lambda) := \begin{bmatrix}
N_1 (\lambda)^T \\-\widehat{N}_2 (\lambda) M(\lambda) N_1 (\lambda)^T
\end{bmatrix} R(\lambda)
\end{equation}
are a particular right minimal basis of $\mathcal{L}(\lambda)$. Therefore, any other right minimal basis $\mathcal{B}_\mathcal{L}$ of $\mathcal{L}(\lambda)$ has the form
\begin{equation} \label{eq:Bsec7}
\mathcal{B}_{\mathcal{L}} = \{S(\lambda) v_1 (\lambda) , \ldots , S(\lambda) v_p (\lambda) \},
\end{equation}
where $v_1 (\lambda), \ldots, v_p(\lambda)$ are vector polynomials, and where it is assumed without loss of generality that $\deg(S(\lambda) v_1 (\lambda)) \leq \allowbreak \cdots \allowbreak \leq \deg(S(\lambda) v_p (\lambda))$. 
The minimality of $\mathcal{B}_{\mathcal{L}}$ follows from the fact that  the columns of $S(\lambda)$ are a minimal basis (see \cite[Part 4 in Main Theorem, p. 495]{Forney}).
Hence, it suffices to prove that $\mathcal{B}_{Q} := \{R(\lambda) v_1 (\lambda) , \ldots , R(\lambda) v_p (\lambda) \}$ is a minimal basis of $\mathcal{N}_r(Q)$.
Indeed, first we note that the vector polynomial $R(\lambda) v_i(\lambda) \in \mathcal{N}_r(Q)$, for $i=1,\ldots , p$, from \eqref{eq:ins3forrecovery}.
Second, the set  $\{R(\lambda) v_1 (\lambda) , \ldots , R(\lambda) v_p (\lambda) \}$ is linearly independent because $\mathcal{B}_{\mathcal{L}}$ is linearly independent.
Third, the set $\mathcal{B}_{Q}$ is a polynomial basis of $\mathcal{N}_r (Q)$ since $\dim \mathcal{N}_r (Q) = \dim \mathcal{N}_r (\mathcal{L})$.
Finally, part (a) in Lemma \ref{lemm:techindminbaslin} implies that $\deg(S(\lambda) v_i (\lambda)) =  \deg(N_1(\lambda)) + \deg (R(\lambda) v_i (\lambda))$, and part (a) in Theorem \ref{thm:minimal_indices} implies that $\deg(S(\lambda) v_i (\lambda))  = \deg(N_1(\lambda)) + \epsilon_i$, for $i=1,\ldots , p$, where $\epsilon_1 \leq \cdots \leq \epsilon_p$ are the right minimal indices of $Q(\lambda)$. 
 Therefore, $\deg (R(\lambda) v_i (\lambda)) = \epsilon_i$, which means that $\mathcal{B}_{Q}$ is a right minimal basis of $Q(\lambda)$.
\end{proof}

As a corollary of Lemma \ref{lemm:SECONDtechindminbaslin}, we obtain Theorem \ref{thm:recovery_min_bases}, which shows abstract recovery results for minimal bases  of matrix polynomials from strong block minimal bases matrix polynomials.
\begin{theorem}\label{thm:recovery_min_bases}
Let $\mathcal{L}(\lambda)$ be a strong block minimal bases degree-$\ell$ matrix polynomial as in \eqref{eq:minbas_ell}, let $\widehat{K}_1 (\lambda)$ and $\widehat{K}_2 (\lambda)$ be the matrices appearing in \eqref{eq:twounimodembed}, let the $p$ columns of $R_\mathcal{L}(\lambda)$  be  a right minimal basis of $\mathcal{L}(\lambda)$, and let the $p$ rows of $L_\mathcal{L}(\lambda)$ be a left minimal basis of $\mathcal{L}(\lambda)$. 
Then, the following statements hold.
\begin{itemize}
\item[\rm (a)] The $p$ columns of 
\[
R_Q(\lambda) :=
\begin{bmatrix}
\widehat{K}_1(\lambda) & 0
\end{bmatrix}
R_\mathcal{L}(\lambda)
\]
are a right minimal basis of $Q(\lambda)$.
\item[\rm (b)] The $p$ rows of 
\[
L_Q(\lambda):=L_\mathcal{L}(\lambda)
\begin{bmatrix}
\widehat{K}_2(\lambda)^T \\ 0
\end{bmatrix}
\]
are a left minimal basis of $Q(\lambda)$.
\end{itemize}
\end{theorem}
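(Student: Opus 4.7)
The plan is to reduce the statement to the abstract form of minimal bases of $\mathcal{L}(\lambda)$ given by Lemma \ref{lemm:SECONDtechindminbaslin}, and then to show that the multiplication by $[\widehat{K}_1(\lambda)\; 0]$ (resp.\ $[\widehat{K}_2(\lambda)^T;\,0]$) picks out exactly the inner minimal basis of $Q(\lambda)$ present in that representation. In other words, the proof amounts to an algebraic simplification that exploits the identity $U_i(\lambda) U_i(\lambda)^{-1} = I$ for $i=1,2$ together with the block decompositions in \eqref{eq:twounimodembed}.

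More precisely, for part (a) I would first apply part (a) of Lemma \ref{lemm:SECONDtechindminbaslin} to deduce that there exists a right minimal basis $\{h_1(\lambda),\ldots,h_p(\lambda)\}$ of $Q(\lambda)$ such that, collecting these columns into $H(\lambda)$,
\[
R_\mathcal{L}(\lambda) \;=\; \begin{bmatrix} N_1(\lambda)^T \\ -\widehat{N}_2(\lambda) M(\lambda) N_1(\lambda)^T \end{bmatrix} H(\lambda).
\]
Then I would compute $[\widehat{K}_1(\lambda)\; 0]\, R_\mathcal{L}(\lambda) = \widehat{K}_1(\lambda) N_1(\lambda)^T H(\lambda)$. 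From the relation $U_1(\lambda) U_1(\lambda)^{-1} = I_{n+m_1}$, applied to the block decomposition in \eqref{eq:twounimodembed}, the $(2,2)$ block yields $\widehat{K}_1(\lambda) N_1(\lambda)^T = I_n$. Hence $R_Q(\lambda) = H(\lambda)$, which is a right minimal basis of $Q(\lambda)$ by construction.

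Part (b) follows by the symmetric argument. Using part (b) of Lemma \ref{lemm:SECONDtechindminbaslin}, write
\[
L_\mathcal{L}(\lambda) \;=\; G(\lambda)^T \bigl[\,N_2(\lambda)\;\; -N_2(\lambda) M(\lambda) \widehat{N}_1(\lambda)^T\,\bigr],
\]
where the rows of $G(\lambda)^T$ form a left minimal basis of $Q(\lambda)$. Right-multiplying by $[\widehat{K}_2(\lambda)^T;\, 0]$ collapses the expression to $G(\lambda)^T N_2(\lambda) \widehat{K}_2(\lambda)^T$. Transposing the $(2,2)$ block of $U_2(\lambda) U_2(\lambda)^{-1}=I_{m+m_2}$ gives $N_2(\lambda)\widehat{K}_2(\lambda)^T = I_m$, so $L_Q(\lambda) = G(\lambda)^T$ is a left minimal basis of $Q(\lambda)$, as required.

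Since Lemma \ref{lemm:SECONDtechindminbaslin} already does all the structural work (minimality, degree shift, rational null space), the remaining work is essentially the extraction of a single identity from \eqref{eq:twounimodembed}, so there is no real obstacle; the only point that merits care is verifying that $\widehat{K}_i(\lambda) N_i(\lambda)^T = I$ comes from the \emph{right} off-diagonal block of the product $U_i U_i^{-1}$, and not from $U_i^{-1} U_i$, since the latter encodes the complementary relations $K_i \widehat{N}_i^T = I$ and $\widehat{K}_i \widehat{N}_i^T = 0$ that are not needed here.
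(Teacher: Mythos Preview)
Your proposal is correct and follows essentially the same route as the paper: invoke Lemma \ref{lemm:SECONDtechindminbaslin} to write any right minimal basis of $\mathcal{L}(\lambda)$ in the factored form, then use $\widehat{K}_1(\lambda)N_1(\lambda)^T=I_n$ to collapse $[\widehat{K}_1(\lambda)\;0]\,R_\mathcal{L}(\lambda)$ to the underlying minimal basis of $Q(\lambda)$. The only cosmetic difference is that for part (b) the paper reduces to part (a) via transposition and Lemma \ref{lemma:transpose}, whereas you argue directly from part (b) of Lemma \ref{lemm:SECONDtechindminbaslin}; both are equally valid.

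One small slip in your closing remark: the identity $\widehat{K}_i(\lambda)N_i(\lambda)^T=I$ is the $(2,2)$ \emph{diagonal} block of $U_i(\lambda)U_i(\lambda)^{-1}$, not an off-diagonal block, and the relations $K_i\widehat{N}_i^T=I$ and $\widehat{K}_i\widehat{N}_i^T=0$ that you attribute to $U_i^{-1}U_i$ are in fact also read off from $U_i U_i^{-1}$ (its $(1,1)$ and $(2,1)$ blocks). This does not affect the argument, since the identity you actually need is correct.
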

\begin{proof}
As in the proof of Lemma \ref{lemm:SECONDtechindminbaslin}, we only prove part (a), since part (b) follows from applying part (a) to $\mathcal{L}(\lambda)^T$ and $Q(\lambda)^T$, together with Lemma \ref{lemma:transpose}.

According to part (a) in Lemma \ref{lemm:SECONDtechindminbaslin}, any right minimal basis of $\mathcal{L}(\lambda)$ is of the form
\[
R_\mathcal{L}(\lambda) = \begin{bmatrix}
N_1(\lambda)^T \\
-\widehat{N}_2(\lambda)M(\lambda)N_1(\lambda)^T
\end{bmatrix}R_Q(\lambda),
\]
for some right minimal basis $R_Q(\lambda)$ of $Q(\lambda)$.
Then, the result follows immediately from $\widehat{K}_1(\lambda)N_1(\lambda)^TR_Q(\lambda)=R_Q(\lambda)$.
\end{proof}

In general, the minimal bases recovery procedure in Theorem \ref{thm:recovery_min_bases} requires one matrix-vector multiplication for each vector of the basis. 
Thus, the potential simplicity and low computational cost of these recovery results and formulas depend on the particular strong block minimal bases matrix polynomials used.
In the particular case of block Kronecker matrix polynomials, the minimal bases recovery procedures turn to be particularly simple, as we show in the following theorem.
\begin{theorem}\label{thm:recovery_min_bases_Kron}
Let $P(\lambda) = \sum_{i=0}^d P_i \lambda^i \in \mathbb{F}[\lambda]^{m\times n}$ be a singular matrix polynomial with $d=k\ell$, for some $k$. 
Let $\mathcal{L}(\lambda)$ be an $(\epsilon,n,\eta,m)$-block Kronecker degree-$\ell$ matrix polynomial as in \eqref{eq:Kronecker_ell} with $k = \epsilon + \eta + 1$ such that
$P(\lambda) = (\Lambda_\eta(\lambda^\ell)^T\otimes I_m)M(\lambda)(\Lambda_{\epsilon}(\lambda^\ell)\otimes I_n)$, where $\Lambda_k (\lambda)$ is the vector polynomial in \eqref{eq:Lambda}. 
Consider the pencil $\mathcal{L}(\lambda)$ partitioned into $k\times k$ blocks whose sizes are fixed by the size $n\times n$ of the blocks of $L_\epsilon (\lambda^\ell) \otimes I_n$ and the size $m\times m$ of the blocks of $L_\eta (\lambda^\ell)^T \otimes I_m$. 
Then, the following statements hold.
\begin{enumerate}
\item[{\rm (a)}] If $\{z_1(\lambda),z_2(\lambda), \hdots, z_p(\lambda)\}$ is any right minimal basis of $\mathcal{L}(\lambda)$ whose vectors are partitioned into blocks conformable to the block columns of $\mathcal{L}(\lambda)$, and if $x_j(\lambda)$ is the $(\epsilon+1)th$ block of $z_j(\lambda)$, for $j=1,2,\hdots,p$, then $\{x_1(\lambda),x_2(\lambda),\hdots, \allowbreak x_p(\lambda)\}$ is a right minimal basis of $P(\lambda)$.
\item[{\rm (b)}] If $\{w_1(\lambda)^T,w_2(\lambda)^T, \hdots, w_q(\lambda)^T\}$ is any left minimal basis of $\mathcal{L}(\lambda)$ whose vectors are partitioned into blocks conformable to the block rows of $\mathcal{L}(\lambda)$, and if $y_j(\lambda)^T$ is the $(\eta+1)th$ block of $w_j(\lambda)^T$, for $j=1,2,\hdots,q$,
then $\{y_1(\lambda)^T,\allowbreak y_2(\lambda)^T,\hdots,y_q(\lambda)^T\}$ is a left minimal basis of $P(\lambda)$.
\end{enumerate}
\end{theorem}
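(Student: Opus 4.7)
The plan is to invoke the abstract recovery result Theorem \ref{thm:recovery_min_bases} and then to observe that, for block Kronecker matrix polynomials, the completions $\widehat{K}_1(\lambda)$ and $\widehat{K}_2(\lambda)$ appearing there can be chosen to be \emph{constant} matrices that simply project onto a single block of a vector. Specifically, since $K_1(\lambda) = L_\epsilon(\lambda^\ell)\otimes I_n$ and $K_2(\lambda) = L_\eta(\lambda^\ell)\otimes I_m$, Example \ref{ex:unimodular} together with standard Kronecker product identities shows that one may take
\[
\widehat{K}_1(\lambda) = e_{\epsilon+1}^T \otimes I_n \quad \text{and} \quad \widehat{K}_2(\lambda) = e_{\eta+1}^T \otimes I_m,
\]
paired with the dual minimal bases $N_1(\lambda) = \Lambda_\epsilon(\lambda^\ell)^T \otimes I_n$ and $N_2(\lambda) = \Lambda_\eta(\lambda^\ell)^T \otimes I_m$. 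The compatibility $\widehat{K}_i(\lambda)N_i(\lambda)^T = I$ required by the proof of Theorem \ref{thm:recovery_min_bases} is immediate because the last entry of $\Lambda_k(\lambda^\ell)$ equals $1$.

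For part (a), I would apply Theorem \ref{thm:recovery_min_bases}(a) directly: if $R_\mathcal{L}(\lambda)$ denotes the matrix with columns $z_1(\lambda),\dots,z_p(\lambda)$, then
\[
R_P(\lambda) := \begin{bmatrix} \widehat{K}_1(\lambda) & 0 \end{bmatrix} R_{\mathcal{L}}(\lambda) = \begin{bmatrix} e_{\epsilon+1}^T \otimes I_n & 0 \end{bmatrix} R_{\mathcal{L}}(\lambda)
\]
has columns forming a right minimal basis of $P(\lambda)$. In the $k\times k$ block partition of $\mathcal{L}(\lambda)$, the block columns consist of $\epsilon+1$ blocks of width $n$ followed by $\eta$ blocks of width $m$, and each $z_j(\lambda)$ splits conformally. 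Since $\begin{bmatrix} e_{\epsilon+1}^T \otimes I_n & 0 \end{bmatrix}$ is a constant row matrix that selects the $(\epsilon+1)$th block of width $n$ from such a partitioned vector, the identification $R_P(\lambda) = \begin{bmatrix} x_1(\lambda) & \cdots & x_p(\lambda) \end{bmatrix}$ is immediate, proving part (a).

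Part (b) follows by an entirely symmetric argument. By Theorem \ref{thm:recovery_min_bases}(b), the rows of
\[
L_P(\lambda) := L_{\mathcal{L}}(\lambda) \begin{bmatrix} \widehat{K}_2(\lambda)^T \\ 0 \end{bmatrix} = L_{\mathcal{L}}(\lambda) \begin{bmatrix} e_{\eta+1} \otimes I_m \\ 0 \end{bmatrix}
\]
form a left minimal basis of $P(\lambda)$. Partitioning each $w_j(\lambda)^T$ into $\eta+1$ blocks of width $m$ followed by $\epsilon$ blocks of width $n$, the right-multiplication by this column simply extracts the $(\eta+1)$th $m$-wide block of $w_j(\lambda)^T$, which is $y_j(\lambda)^T$ by hypothesis.

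I do not anticipate a genuine obstacle: the substantive work has already been carried out in Theorem \ref{thm:recovery_min_bases} via Lemma \ref{lemm:SECONDtechindminbaslin}, and the sole remaining task is to exhibit the explicit constant completions $\widehat{K}_i(\lambda)$ delivered by Example \ref{ex:unimodular} and to recognise that the general recovery formulas then collapse into plain block-extraction operations. As a direct consequence, no arithmetic whatsoever is required to recover the minimal bases of $P(\lambda)$ from those of $\mathcal{L}(\lambda)$.
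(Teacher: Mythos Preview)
Your proof is correct and follows essentially the same route as the paper: the paper invokes Lemma \ref{lemm:SECONDtechindminbaslin} directly and reads off the $(\epsilon+1)$th block from the explicit form $N_1(\lambda)^T = \Lambda_\epsilon(\lambda^\ell)\otimes I_n$, while you pass through its corollary Theorem \ref{thm:recovery_min_bases} together with the explicit constant completions $\widehat{K}_i$ from Example \ref{ex:unimodular}, which amounts to the same block-extraction observation.
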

\begin{proof}
Parts (a) and (b) are an immediate consequence of Lemma \ref{lemm:SECONDtechindminbaslin} combined with the fact that for $(\epsilon,n,\eta,m)$-block Kronecker matrix polynomials $N_1(\lambda)=\Lambda_\epsilon(\lambda^\ell)^T\otimes I_n$ and $N_2(\lambda)=\Lambda_\eta(\lambda^\ell)^T\otimes I_m$.
\end{proof}
\subsection{Eigenvectors recovery procedures}

In this subsection, we present the final recovery procedures for eigenvectors of regular matrix polynomials from those of block minimal bases matrix polynomials and, in particular, from those of block Kronecker matrix polynomials.

Lemma \ref{lemm:THIRDtechindminbaslin} is the counterpart of part (b) in Lemma \ref{lemm:techindminbaslin} and Lemma \ref{lemm:SECONDtechindminbaslin} for eigenvectors. 
Since only regular matrix polynomials have eigenvectors, we assume that $Q(\lambda)$ is square and regular, which is equivalent to the fact that $\mathcal{L}(\lambda)$ is square and regular, since $\mathcal{L}(\lambda)$ is a strong $\ell$-ification of $Q(\lambda)$.
 The proof of Lemma \ref{lemm:THIRDtechindminbaslin} is omitted for brevity, and because it follows the same steps as those of Lemmas \ref{lemm:techindminbaslin} and \ref{lemm:SECONDtechindminbaslin} but removing all the arguments concerning degrees since only null spaces of constant matrices are considered. 
 Nevertheless, we emphasize that the key tool for proving Lemma \ref{lemm:THIRDtechindminbaslin} is equation \eqref{eq:ins3forrecovery} evaluated at the eigenvalue $\lambda_0$ of interest.

\begin{lemma} \label{lemm:THIRDtechindminbaslin}
Let $\mathcal{L}(\lambda)$ be a square and regular strong block minimal bases degree-$\ell$ matrix polynomial as in \eqref{eq:minbas_ell}, let $N_1(\lambda)$ be a minimal basis dual to $K_1 (\lambda)$, let $N_2(\lambda)$ be a minimal basis dual to $K_2 (\lambda)$, let $Q(\lambda)$ be the matrix polynomial in \eqref{eq:Qpolinminbaslin}, and let $\widehat{N}_1 (\lambda)$ and $\widehat{N}_2 (\lambda)$ be the matrices appearing in \eqref{eq:twounimodembed}. 
Let $\lambda_0$ be a finite eigenvalue of $Q(\lambda)$ (which is also an eigenvalue of $\mathcal{L}(\lambda)$).
 Then, the following statements hold.
\begin{enumerate}
\item[\rm (a)] Let $G_1(\lambda_0):=N_1 (\lambda_0)  M(\lambda_0)^T\widehat{N}_2 (\lambda_0)^T$. 
Then, any basis of $\mathcal{N}_r (\mathcal{L}(\lambda_0))$ has the form
\[ \mathcal{B}_{r,\lambda_0} =
\left\{\begin{bmatrix}
N_1 (\lambda_0)^T \\-G_1(\lambda)^T
\end{bmatrix} x_1, \ldots , \begin{bmatrix}
N_1 (\lambda_0)^T \\-G_1(\lambda_0)^T
\end{bmatrix} x_t \right\},
\]
where $\{x_1 , \ldots, x_t\}$ is some basis of $\mathcal{N}_r(Q(\lambda_0))$, and, vice versa, for any basis $\{x_1 , \ldots, x_t\}$ of $\mathcal{N}_r(Q(\lambda_0))$,  the set of vectors $\mathcal{B}_{r,\lambda_0}$ is a basis of $\mathcal{N}_r (\mathcal{L}(\lambda_0))$.

\item[\rm (b)] Let $G_2(\lambda_0) := N_2 (\lambda_0) M(\lambda_0)  \widehat{N}_1 (\lambda_0)^T$. 
Then, any basis of $\mathcal{N}_\ell (\mathcal{L}(\lambda_0))$ has the form
\[ \mathcal{B}_{\ell,\lambda_0} =
\left\{y_1^T [
N_2 (\lambda_0) , -  G_2(\lambda_0)
], \ldots ,  y_t^T [
N_2 (\lambda_0) , - G_2(\lambda_0)
] \right\},
\]
where $\{y_1^T, \ldots, y_t^T\}$ is some basis of $\mathcal{N}_\ell (Q(\lambda_0))$, and, vice versa, for any basis $\{y_1^T , \ldots, y_t^T\}$ of $\mathcal{N}_\ell (Q(\lambda_0))$,  the set of vectors $\mathcal{B}_{\ell,\lambda_0}$ is a basis of $\mathcal{N}_\ell (\mathcal{L}(\lambda_0))$.
\end{enumerate}
\end{lemma}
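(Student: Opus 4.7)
The plan is to essentially specialize the proof of Lemma \ref{lemm:techindminbaslin}(a) and Lemma \ref{lemm:SECONDtechindminbaslin}(a) to a single point $\lambda_0 \in \overline{\mathbb{F}}$, since only constant null spaces are involved. The core identity I would reuse is equation \eqref{eq:ins3forrecovery}, which, evaluated at $\lambda_0$, reads
\[
(U_2(\lambda_0)^{-T} \oplus I_{m_1}) \, \mathcal{L}(\lambda_0) \,
\begin{bmatrix} N_1(\lambda_0)^T \\ -X(\lambda_0) \end{bmatrix}
=
\begin{bmatrix} 0 \\ Q(\lambda_0) \\ 0 \end{bmatrix},
\]
where $X(\lambda) = \widehat{N}_2(\lambda) M(\lambda) N_1(\lambda)^T$ and therefore $X(\lambda_0)^T = G_1(\lambda_0)$, matching the notation of the statement.

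For part (a), first I would observe that $U_2(\lambda_0)^{-T} \oplus I_{m_1}$ is an invertible constant matrix (a unimodular matrix polynomial is invertible at every $\lambda_0 \in \overline{\mathbb{F}}$). Consequently the identity above shows that for every $x \in \mathcal{N}_r(Q(\lambda_0))$, the vector $\begin{bmatrix} N_1(\lambda_0)^T \\ -G_1(\lambda_0)^T \end{bmatrix} x$ lies in $\mathcal{N}_r(\mathcal{L}(\lambda_0))$, which verifies the inclusion. Next I would show that the linear map $x \mapsto \begin{bmatrix} N_1(\lambda_0)^T \\ -G_1(\lambda_0)^T \end{bmatrix} x$ is injective: this follows from $N_1(\lambda_0)^T$ having full column rank, which in turn is a consequence of $N_1(\lambda)$ being a minimal basis together with Theorem \ref{thm:minimal_basis}.

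The dimension count then closes the argument. Since $\mathcal{L}(\lambda)$ is a strong $\ell$-ification of $Q(\lambda)$ by Theorem \ref{thm:ell-ification}(b), it shares the same finite elementary divisors, so in particular $\dim \mathcal{N}_r(\mathcal{L}(\lambda_0)) = \dim \mathcal{N}_r(Q(\lambda_0))$ (this equality of geometric multiplicities follows from the coincidence of partial multiplicities at $\lambda_0$). Hence an injective linear map from $\mathcal{N}_r(Q(\lambda_0))$ into $\mathcal{N}_r(\mathcal{L}(\lambda_0))$ is automatically a bijection, and any basis of the source is mapped to a basis of the target; conversely, if a basis $\mathcal{B}_{r,\lambda_0}$ of $\mathcal{N}_r(\mathcal{L}(\lambda_0))$ is obtained by applying this injection to $t$ vectors, those $t$ vectors must themselves form a basis of $\mathcal{N}_r(Q(\lambda_0))$ because they are linearly independent (by injectivity the map has trivial kernel, so linear dependence would contradict independence of $\mathcal{B}_{r,\lambda_0}$) and of the correct cardinality.

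Part (b) I would dispatch by transposition: by Lemma \ref{lemma:transpose}, $\mathcal{L}(\lambda)^T$ is again a strong block minimal bases matrix polynomial (a strong $\ell$-ification of $Q(\lambda)^T$) with the roles of the pairs $(K_1,N_1)$ and $(K_2,N_2)$ interchanged. Applying part (a) to $\mathcal{L}(\lambda)^T$ and $Q(\lambda)^T$ at $\lambda_0$, and then transposing the resulting vectors, delivers the claimed description of $\mathcal{N}_\ell(\mathcal{L}(\lambda_0))$ in terms of $\mathcal{N}_\ell(Q(\lambda_0))$, with $G_2(\lambda_0) = N_2(\lambda_0) M(\lambda_0) \widehat{N}_1(\lambda_0)^T$ arising naturally as the transpose of the analogue of $G_1$. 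There is no real obstacle here; the only subtlety worth a line of care is the dimension-matching step that uses the strong $\ell$-ification property to ensure equal geometric multiplicities at $\lambda_0$, which is what allows injectivity to upgrade to bijectivity.
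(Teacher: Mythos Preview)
Your proposal is correct and follows essentially the same approach as the paper: the authors explicitly say the proof ``follows the same steps as those of Lemmas \ref{lemm:techindminbaslin} and \ref{lemm:SECONDtechindminbaslin} but removing all the arguments concerning degrees'' and that ``the key tool \ldots\ is equation \eqref{eq:ins3forrecovery} evaluated at the eigenvalue $\lambda_0$,'' which is precisely what you do. Your explicit dimension-matching argument (via equality of partial multiplicities from the strong $\ell$-ification property) and the transposition step for part~(b) via Lemma~\ref{lemma:transpose} are exactly the natural specializations of the earlier proofs to the constant case.
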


As a corollary of Lemma \ref{lemm:THIRDtechindminbaslin}, we obtain Theorem \ref{thm:recovery_eigenvectors}, which shows abstract recovery results for eigenvectors  of matrix polynomials from those of strong block minimal bases matrix polynomials.
The proof of Theorem \ref{thm:recovery_eigenvectors} follows the same steps as those of Theorem \ref{thm:recovery_min_bases}, so it is omitted.
 For brevity, only the recovery of individual eigenvectors is explicitly stated. 
Clearly, the recovery of bases of the corresponding null spaces follows the same pattern.
\begin{theorem}\label{thm:recovery_eigenvectors}
Let $\mathcal{L}(\lambda)$ be a strong block minimal bases degree-$\ell$ matrix polynomial as in \eqref{eq:minbas_ell}, let $\widehat{K}_1 (\lambda)$ and $\widehat{K}_2 (\lambda)$ be the matrices appearing in \eqref{eq:twounimodembed}, let $\lambda_0$ be a finite eigenvalue of $\mathcal{L}(\lambda)$, and let $z$ and $w^T$ be, respectively,  right and  left eigenvectors of $\mathcal{L}(\lambda)$ associated with $\lambda_0$.
Then, the following statements hold.
\begin{itemize}
\item[\rm (a)] The vector
\[
x :=
\begin{bmatrix}
\widehat{K}_1(\lambda_0) & 0
\end{bmatrix}
z
\]
is a right eigenvector of $Q(\lambda)$ for the eigenvalue $\lambda_0$.
\item[\rm (b)] The vector
\[
y^T:=w^T
\begin{bmatrix}
\widehat{K}_2(\lambda_0)^T \\ 0
\end{bmatrix}
\]
is a left eigenvector of $Q(\lambda)$ for the eigenvalue $\lambda_0$.
\end{itemize}
\end{theorem}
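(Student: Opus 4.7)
The plan is to reduce Theorem~\ref{thm:recovery_eigenvectors} to the abstract characterization of null spaces of $\mathcal{L}(\lambda_0)$ already established in Lemma~\ref{lemm:THIRDtechindminbaslin}, and then to simplify the resulting expression using the identities implicit in the embedding \eqref{eq:twounimodembed}. The central observation is that the equations $U_i(\lambda)U_i(\lambda)^{-1}=I$ for $i=1,2$ decompose into four block identities, the most relevant here being
\[
\widehat{K}_i(\lambda)N_i(\lambda)^T = I \quad \mbox{and}\quad \widehat{K}_i(\lambda)\widehat{N}_i(\lambda)^T = 0,
\]
which will immediately collapse the multiplication in the recovery formulas.

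For part (a), I would start by invoking part (a) of Lemma~\ref{lemm:THIRDtechindminbaslin} applied to the given right eigenvector $z$ of $\mathcal{L}(\lambda)$ associated with $\lambda_0$. The lemma provides some vector $x\in\mathcal{N}_r(Q(\lambda_0))$ such that
\[
z = \begin{bmatrix} N_1(\lambda_0)^T \\ -G_1(\lambda_0)^T\end{bmatrix}x,
\]
where $G_1(\lambda_0) = N_1(\lambda_0)M(\lambda_0)^T\widehat{N}_2(\lambda_0)^T$. Applying the recovery operator then yields
\[
[\widehat{K}_1(\lambda_0)\;\; 0]\, z = \widehat{K}_1(\lambda_0)N_1(\lambda_0)^T x = x,
\]
using the identity $\widehat{K}_1(\lambda_0)N_1(\lambda_0)^T = I$ extracted from \eqref{eq:twounimodembed}. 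Since $z\neq 0$ and $N_1(\lambda_0)^T$ has full column rank (as $N_1(\lambda)$ is a minimal basis, its evaluation at any $\lambda_0\in\overline{\mathbb{F}}$ has full row rank by Theorem~\ref{thm:minimal_basis}), the vector $x$ is nonzero, hence a genuine right eigenvector of $Q(\lambda)$ at $\lambda_0$.

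Part (b) will follow by the transpose argument already used several times in the paper: Lemma~\ref{lemma:transpose} shows that $\mathcal{L}(\lambda)^T$ is a strong block minimal bases matrix polynomial with the roles of $(K_1,N_1)$ and $(K_2,N_2)$ interchanged, and is a strong $\ell$-ification of $Q(\lambda)^T$. A left eigenvector $w^T$ of $\mathcal{L}(\lambda)$ becomes a right eigenvector $w$ of $\mathcal{L}(\lambda)^T$, so applying part (a) to the transposed polynomial and the swapped pairs produces $y = [\widehat{K}_2(\lambda_0)\;\;0]\,w$ as a right eigenvector of $Q(\lambda)^T$; transposing back recovers exactly the stated formula $y^T = w^T[\widehat{K}_2(\lambda_0)^T;\,0]$.

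There is no genuine obstacle here beyond bookkeeping: the entire content of the theorem is encoded in Lemma~\ref{lemm:THIRDtechindminbaslin}, and the block identity $\widehat{K}_iN_i^T=I$ does all of the work. The mildest point requiring care is simply verifying that the recovered vector is nonzero, which as noted is immediate from the full-rank property of minimal bases evaluated at any $\lambda_0\in\overline{\mathbb{F}}$.
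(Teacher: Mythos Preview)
Your proposal is correct and mirrors the paper's own argument essentially step for step: the paper omits the proof, noting that it follows the same steps as the proof of Theorem~\ref{thm:recovery_min_bases}, which invokes the structural characterization (here, Lemma~\ref{lemm:THIRDtechindminbaslin}) and then uses the identity $\widehat{K}_1(\lambda)N_1(\lambda)^T=I$ from \eqref{eq:twounimodembed}, with part~(b) obtained by transposing via Lemma~\ref{lemma:transpose}. Your additional remark verifying that the recovered vector is nonzero (using that $N_1(\lambda_0)$ has full row rank) is a small but welcome clarification.
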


As with the minimal bases recovery procedure in Theorem \ref{thm:recovery_min_bases}, the eigenvectors recovery procedure in Theorem \ref{thm:recovery_eigenvectors} requires one matrix-vector multiplication for each eigenvector.
Thus, its potential simplicity and low computational cost depend on the particular strong block minimal bases matrix polynomials used.
In the particular case of   block Kronecker matrix polynomials, the eigenvectors recovery procedures turn to be particularly simple, as we show in the following theorem.
\begin{theorem}\label{thm:finite_eig_Kron}
Let $P(\lambda) = \sum_{i=0}^d P_i \lambda^i \in \mathbb{F}[\lambda]^{n\times n}$ be a regular matrix polynomial with $d=k\ell$, for some $k$.
Let $\mathcal{L}(\lambda)$ be an $(\epsilon,n,\eta,n)$-block Kronecker matrix polynomial as in \eqref{eq:Kronecker_ell} with $k = \epsilon + \eta + 1$ such that
$P(\lambda) = (\Lambda_\eta(\lambda^\ell)^T\otimes I_n)M(\lambda)(\Lambda_{\epsilon}(\lambda^\ell)\otimes I_n)$, where $\Lambda_k (\lambda)$ is the vector polynomial in \eqref{eq:Lambda}. 
Consider the pencil $\mathcal{L}(\lambda)$ partitioned into $k\times k$ blocks of size $n\times n$, any vector of size $nk\times 1$  partitioned into $k\times 1$ blocks of size $n\times 1$, and any vector of size $1 \times nk$  partitioned into $1\times k$ blocks of size $1\times n$. 
Then the following statements hold.
\begin{enumerate}
\item[{\rm (a1)}] If $z\in\mathbb{F}^{nk\times 1}$ is a right eigenvector of $\mathcal{L}(\lambda)$ with finite eigenvalue $\lambda_0$, then the $(\epsilon+1)$th block of $z$ is a right eigenvector of $P(\lambda)$ with finite eigenvalue $\lambda_0$.
\item[{\rm (a2)}] If $z\in\mathbb{F}^{nk\times 1}$ is a right eigenvector of $\mathcal{L}(\lambda)$ for the eigenvalue $\infty$, then the first block of $z$ is a right eigenvector of $P(\lambda)$ for the eigenvalue $\infty$.
\item[{\rm (b1)}]  If $w^T\in\mathbb{F}^{1\times nk}$ is a left eigenvector of $\mathcal{L}(\lambda)$ with finite eigenvalue $\lambda_0$, then the $(\eta+1)$th block of $w^T$ is a left eigenvector of $P(\lambda)$ with finite eigenvalue $\lambda_0$.
\item[{\rm (b2)}]  If $w^T\in\mathbb{F}^{1\times nk}$ is a left eigenvector of $\mathcal{L}(\lambda)$ for the eigenvalue $\infty$,  then the first block of $w^T$ is a left eigenvector of $P(\lambda)$ for the eigenvalue $\infty$.
\end{enumerate}
\end{theorem}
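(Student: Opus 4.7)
The main engine is Lemma~\ref{lemm:THIRDtechindminbaslin} (equivalently, Theorem~\ref{thm:recovery_eigenvectors}), specialized to the block Kronecker setting, together with the explicit unimodular embeddings constructed in Example~\ref{ex:unimodular}. For an $(\epsilon,n,\eta,n)$-block Kronecker matrix polynomial, the relevant dual minimal bases are
\[
K_1(\lambda)=L_\epsilon(\lambda^\ell)\otimes I_n,\quad N_1(\lambda)^T=\Lambda_\epsilon(\lambda^\ell)\otimes I_n,\quad K_2(\lambda)=L_\eta(\lambda^\ell)\otimes I_n,\quad N_2(\lambda)^T=\Lambda_\eta(\lambda^\ell)\otimes I_n.
\]
Example~\ref{ex:unimodular} shows that the natural unimodular completion $V_k(\lambda)\otimes I_n$ adds the last row $e_{k+1}^T\otimes I_n$, so the embedding matrices $\widehat K_i(\lambda)$ of Theorem~\ref{thm:recovery_eigenvectors} are $\widehat K_1(\lambda)=e_{\epsilon+1}^T\otimes I_n$ and $\widehat K_2(\lambda)=e_{\eta+1}^T\otimes I_n$.

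For (a1) and (b1), the finite case is then immediate. Theorem~\ref{thm:recovery_eigenvectors}(a) yields $x=[\widehat K_1(\lambda_0),\,0]\,z=(e_{\epsilon+1}^T\otimes I_n,\,0)\,z$, which is exactly the $(\epsilon+1)$th block of $z$ under the partitioning into $k$ blocks of size $n$. Analogously, Theorem~\ref{thm:recovery_eigenvectors}(b) gives $y^T=w^T[\widehat K_2(\lambda_0)^T;\,0]^T$, picking out the $(\eta+1)$th block of $w^T$. A brief verification will be that these blocks are nonzero (otherwise $z$ or $w$ would be forced to vanish by the minimal basis structure of $L_\epsilon(\lambda_0^\ell)\otimes I_n$ and $L_\eta(\lambda_0^\ell)\otimes I_n$, contradicting that it is an eigenvector).

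For (a2) and (b2), the plan is to apply the same machinery to $\rev_\ell \mathcal{L}(\lambda)$ at the eigenvalue $0$: by Theorem~\ref{thm:ell-ification}(b), $\rev_\ell\mathcal{L}(\lambda)$ is a strong $\ell$-ification of $\rev_d P(\lambda)$, and right/left eigenvectors of $\mathcal{L}(\lambda)$ at $\infty$ are precisely right/left null vectors of $\rev_\ell \mathcal{L}(0)$, and similarly for $P$. The subtle point here is the main obstacle: one must identify the correct $\widehat{K}_i$ for the reversed minimal bases $\rev_\ell(L_\epsilon(\lambda^\ell)\otimes I_n)$ and $\rev_\ell(L_\eta(\lambda^\ell)\otimes I_n)$. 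Using the identity $\rev_\ell L_k(\lambda^\ell)=-J_k\,L_k(\lambda^\ell)\,J_{k+1}$, where $J_r$ is the $r\times r$ reversal permutation, one sees that the unimodular completion row is $e_1^T$ rather than $e_{k+1}^T$; hence in the reversed embedding $\widehat{K}_1$ and $\widehat{K}_2$ become $e_1^T\otimes I_n$. Plugging this into Theorem~\ref{thm:recovery_eigenvectors} shows that the first block of $z$ (resp.\ $w^T$) is a right (resp.\ left) null vector of $\rev_d P(0)$, i.e., an eigenvector of $P(\lambda)$ at $\infty$.

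If the permutation identification above is felt to be too terse, an equivalent direct verification is available: compute the leading coefficient $\mathcal{L}_\ell$ of $\mathcal{L}(\lambda)$ explicitly, partition any null vector $z=(z_1,\ldots,z_k)^T$ in $n$-blocks, observe that $(L_\epsilon)_\ell\otimes I_n$ forces $z_2=\cdots=z_{\epsilon+1}=0$, and multiply the remaining top block equation by $\Lambda_\eta(0)^T\otimes I_n=e_{\eta+1}^T\otimes I_n$ to kill the $L_\eta$-term; what is left is exactly $(M_\ell)_{1,1}z_1=P_d z_1=0$, with nonvanishing $z_1$ for the same reason as in the finite case. The left infinite case is symmetric. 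This second route avoids any permutation bookkeeping and is probably the cleanest way to present the proof.
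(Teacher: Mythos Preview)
Your approach is correct and essentially the same as the paper's: both invoke Lemma~\ref{lemm:THIRDtechindminbaslin}/Theorem~\ref{thm:recovery_eigenvectors} for the finite case and pass to $\rev_\ell\mathcal{L}(\lambda)$ at $0$ for the infinite case. The paper handles the infinite case slightly more directly by observing that the relevant dual basis becomes $\rev_{\epsilon\ell}\Lambda_\epsilon(\lambda^\ell)\otimes I_n$, whose value at $0$ is $e_1\otimes I_n$, rather than tracking the permutation identity on $L_k$; both routes land on ``first block''.

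One small slip in your alternative direct computation: to kill the $(L_\eta^T)_\ell$-contribution you should left-multiply the top block equations by $e_1^T\otimes I_n$ (the leading-coefficient row of $\Lambda_\eta(\lambda^\ell)^T$), not by $\Lambda_\eta(0)^T\otimes I_n=e_{\eta+1}^T\otimes I_n$, since $((L_\eta^T)_\ell\otimes I_n)(z_{\epsilon+2},\dots,z_k)^T=(0,z_{\epsilon+2},\dots,z_k)^T$ has vanishing \emph{first} block, not last. With that correction the first block-row reads $(M_\ell)_{1,1}z_1=P_d z_1=0$, as you intended.
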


\begin{proof} Parts (a1) and (b1) follow directly from Lemma \ref{lemm:THIRDtechindminbaslin} and Theorem \ref{thm:recovery_eigenvectors},
just by taking into account that for an $(\epsilon,n,\eta,n)$-block Kronecker matrix polynomial $N_1 (\lambda)^T = \Lambda_\epsilon (\lambda^\ell) \otimes I_n$ and $N_2 (\lambda) = \Lambda_\eta (\lambda^\ell)^T \otimes I_n$.

In order to prove parts (a2) and (b2), recall that the eigenvectors of $\mathcal{L}(\lambda)$ (resp. $P(\lambda)$) corresponding to the eigenvalue $\infty$ are those of $\rev_\ell \mathcal{L}(\lambda)$ (resp. $\rev_d P(\lambda)$) corresponding to the eigenvalue $0$. 
As a consequence of Theorem \ref{thm:minbasesdegreeseq}, the matrix polynomial $\rev_\ell \mathcal{L}(\lambda)$ is a strong block minimal bases matrix polynomial (although not exactly a block Kronecker matrix polynomial), which is an $\ell$-ification of $\rev_dP(\lambda)$ (recall the proof of part (b) of Theorem \ref{thm:ell-ification}).
 Therefore, Lemma \ref{lemm:THIRDtechindminbaslin} and Theorem \ref{thm:recovery_eigenvectors} can be applied to the zero eigenvalue of $\rev_\ell \mathcal{L}(\lambda)$ and $\rev_d P(\lambda)$.
 For doing this properly, $N_1(\lambda_0)^T$ has to be replaced by $\rev_{\epsilon\ell} \Lambda_\epsilon (\lambda_0^\ell)\otimes I_n$ and $N_2(\lambda_0)$ has to be replaced by $\rev_{\eta\ell} \Lambda_\eta (\lambda_0^\ell)^T\otimes I_m$, as a consequence of  Theorem \ref{thm:minbasesdegreeseq} (together with other replacements which are of no interest in this proof). 
Then, parts (a2) and (b2) follow from the expression of $\rev_{\epsilon\ell} \Lambda_\epsilon (\lambda^\ell)\otimes I_n$ and  $\rev_{\eta\ell} \Lambda_\eta (\lambda^\ell)^T\otimes I_m$.
\end{proof}

\subsection{One-sided factorizations}

We end this section by showing that strong block minimal bases matrix polynomials admit one-sided factorizations as those used in \cite{Framework}.
One-sided factorizations are useful for performing residual ``local'', i.e., for each particular computed eigenpair, backward error analyses of regular PEPs solved by $\ell$-ifications.

In the following definition we introduce right- and left-sided factorizations as were defined in \cite{Framework} particularized to the case of matrix polynomials.
\begin{definition}\label{def:one-sided}
Given two matrix polynomials $M(\lambda)\in\mathbb{F}[\lambda]^{r\times r}$ and $N(\lambda)\in\mathbb{F}[\lambda]^{s\times s}$, we say that $M(\lambda)$ and $N(\lambda)$ satisfy a right-sided factorization if
\[
M(\lambda)F(\lambda)=G(\lambda)N(\lambda)
\]
for some matrix polynomials $F(\lambda),G(\lambda)\in\mathbb{F}[\lambda]^{r\times s}$.
Additionally, we say that $M(\lambda)$ and $N(\lambda)$ satisfy a left-sided factorization if
\[
E(\lambda)M(\lambda)=N(\lambda)H(\lambda),
\]
for some matrix polynomials $E(\lambda),H(\lambda)\in\mathbb{F}[\lambda]^{s\times r}$.
\end{definition}

\begin{theorem}\label{thm:one-sided}
Let $\mathcal{L}(\lambda)$ be a strong block minimal bases matrix polynomial as in \eqref{eq:minbas_ell}, let $N_1(\lambda)$ be a minimal basis dual to $K_1 (\lambda)$, let $N_2(\lambda)$ be a minimal basis dual to $K_2 (\lambda)$, let $Q(\lambda)$ be the matrix polynomial in \eqref{eq:Qpolinminbaslin}, let $\widehat{K}_i (\lambda)$ and $\widehat{N}_i (\lambda)$ be the matrix polynomials appearing in \eqref{eq:twounimodembed}, for $i=1,2$.
Then, the following right- and left-sided factorizations hold.
\[
\mathcal{L}(\lambda)
\begin{bmatrix}
N_1(\lambda)^T \\
-\widehat{N}_2(\lambda)M(\lambda)N_1(\lambda)^T
\end{bmatrix}=
\begin{bmatrix}
\widehat{K}_2(\lambda)^T \\ 0
\end{bmatrix}Q(\lambda),
\]
and
\[
\begin{bmatrix}
 N_2(\lambda) & -N_2(\lambda)M(\lambda)\widehat{N}_1(\lambda)^T
\end{bmatrix}\mathcal{L}(\lambda)=
Q(\lambda)\begin{bmatrix}
\widehat{K}_1(\lambda) & 0
\end{bmatrix}.
\]
\end{theorem}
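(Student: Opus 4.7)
The plan is to verify both factorizations by direct block multiplication, exploiting the identities furnished by the unimodular embedding of Theorem \ref{thm:Embedding}. Recall that the relations $U_i(\lambda) U_i(\lambda)^{-1} = I$ and $U_i(\lambda)^{-1} U_i(\lambda) = I$ in \eqref{eq:twounimodembed} yield, for $i=1,2$,
\[
K_i(\lambda) N_i(\lambda)^T = 0, \quad K_i(\lambda)\widehat{N}_i(\lambda)^T = I_{m_i}, \quad \widehat{K}_i(\lambda) N_i(\lambda)^T = I, \quad \widehat{N}_i(\lambda)^T K_i(\lambda) + N_i(\lambda)^T \widehat{K}_i(\lambda) = I.
\]
These identities (and the transpose of the last one for $i=2$) are the only algebraic ingredients needed.

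For the right-sided factorization, I would simply multiply out
\[
\mathcal{L}(\lambda)\begin{bmatrix} N_1(\lambda)^T \\ -\widehat{N}_2(\lambda) M(\lambda) N_1(\lambda)^T \end{bmatrix}
= \begin{bmatrix} \bigl(I - K_2(\lambda)^T \widehat{N}_2(\lambda)\bigr) M(\lambda) N_1(\lambda)^T \\ K_1(\lambda) N_1(\lambda)^T \end{bmatrix}.
\]
The bottom block vanishes by $K_1 N_1^T = 0$. For the top block, transposing the identity $\widehat{N}_2^T K_2 + N_2^T \widehat{K}_2 = I$ gives $I - K_2^T \widehat{N}_2 = \widehat{K}_2^T N_2$, so the top block becomes $\widehat{K}_2(\lambda)^T N_2(\lambda) M(\lambda) N_1(\lambda)^T = \widehat{K}_2(\lambda)^T Q(\lambda)$, as required.

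The left-sided factorization is entirely analogous: multiplying out yields
\[
\begin{bmatrix} N_2(\lambda) & -N_2(\lambda) M(\lambda)\widehat{N}_1(\lambda)^T \end{bmatrix}\mathcal{L}(\lambda)
= \begin{bmatrix} N_2(\lambda) M(\lambda)\bigl(I - \widehat{N}_1(\lambda)^T K_1(\lambda)\bigr) & N_2(\lambda) K_2(\lambda)^T \end{bmatrix}.
\]
The right block vanishes by $N_2 K_2^T = 0$ (which is the transpose of $K_2 N_2^T = 0$), and in the left block we substitute $I - \widehat{N}_1^T K_1 = N_1^T \widehat{K}_1$ to get $N_2 M N_1^T \widehat{K}_1 = Q(\lambda)\widehat{K}_1(\lambda)$.

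There is no real obstacle here; the statement is essentially a repackaging of the identity \eqref{eq:ins3forrecovery} already used to prove Lemma \ref{lemm:techindminbaslin} (and its left-handed analogue). The only point to watch is keeping the four pairs of dual/embedding identities straight and applying the correct one in each block. I would write the proof as two short displays of block multiplication followed by the appropriate substitution in each case.
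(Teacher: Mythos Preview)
Your proof is correct and follows essentially the same approach as the paper: both rely on the identities coming from the unimodular embedding \eqref{eq:twounimodembed}, with the paper invoking \eqref{eq:ins3forrecovery} and then multiplying through by $U_2(\lambda)^T\oplus I_{m_1}$, while you carry out the equivalent block multiplication directly. The only minor difference is that the paper obtains the left-sided factorization by transposing (i.e., applying the right-sided result to $\mathcal{L}(\lambda)^T$), whereas you redo the computation by hand; either way is fine.
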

\begin{proof}
From \eqref{eq:ins3forrecovery}, we obtain
\[
\mathcal{L}(\lambda)
\begin{bmatrix}
N_1(\lambda)^T \\
-\widehat{N}_2(\lambda)M(\lambda)N_1(\lambda)^T
\end{bmatrix}=(U_2(\lambda)^T\oplus I_{m_1})
\begin{bmatrix}
0 \\ Q(\lambda) \\ 0
\end{bmatrix}
\]
which, by using the structure of $U_2(\lambda)$ in \eqref{eq:twounimodembed}, implies the right-sided factorization.
The left-sided factorization is obtained from a right-sided factorization of $\mathcal{L}(\lambda)^T$.
\end{proof}

In many important situations (see, for example, \cite{conditioning,BackErrors}), the one-sided factorizations in Definition \ref{def:one-sided} typically hold in the more specialized forms
\[
M(\lambda)F(\lambda)=g\otimes N(\lambda) \quad \mbox{and} \quad E(\lambda)M(\lambda)=h^T\otimes N(\lambda),
\]
for some vectors $g,h$.
In the following theorem, we show that for block Kronecker matrix polynomials  the one-sided factorization in Theorem \ref{thm:one-sided}  take this simpler form.
The matrix polynomial (recall Example \ref{ex:unimodular})
\[
\widehat{\Lambda}_k(\lambda) := \left[ \begin{array}{ccccc}
-1 & -\lambda & -\lambda^{2}& \cdots & -\lambda^{k-1} \\
& -1 & -\lambda & \ddots & \vdots \\
& & -1 & \ddots & -\lambda^{2}\\
& & & \ddots & -\lambda \\
& & & & -1 \\
0 & \cdots & \cdots & \cdots & 0
\end{array}\right]
\]
is important in what follows.
\begin{theorem}
Let $P(\lambda) = \sum_{i=0}^d P_i \lambda^i \in \mathbb{F}[\lambda]^{n\times n}$ be a regular matrix polynomial with $d=k\ell$, for some $k$.
Let $\mathcal{L}(\lambda)$ be an $(\epsilon,n,\eta,n)$-block Kronecker matrix polynomial as in \eqref{eq:Kronecker_ell} with $k = \epsilon + \eta + 1$ such that
$P(\lambda) = (\Lambda_\eta(\lambda^\ell)^T\otimes I_n)M(\lambda)(\Lambda_{\epsilon}(\lambda^\ell)\otimes I_n)$, where $\Lambda_k (\lambda)$ is the vector polynomial in \eqref{eq:Lambda}. 
Then, the following right- and left-sided factorizations hold:
\[
\mathcal{L}(\lambda)
\begin{bmatrix}
\Lambda_\epsilon(\lambda^\ell)\otimes I_n \\
-(\widehat{\Lambda}_\eta(\lambda^\ell)^T\otimes I_m)(\lambda)M(\lambda)(\Lambda_\epsilon(\lambda^\ell)\otimes I_n)
\end{bmatrix}=
\begin{bmatrix}
e_{\eta+1} \\ 0
\end{bmatrix}\otimes P(\lambda),
\]
and
\[
\begin{bmatrix}
\Lambda_\eta(\lambda^\ell)^T\otimes I_m & -(\Lambda_\eta(\lambda^\ell)^T\otimes I_m)M(\lambda)(\widehat{\Lambda}_\epsilon(\lambda^\ell)\otimes I_n)
\end{bmatrix}\mathcal{L}(\lambda)=
\begin{bmatrix}
e_{\epsilon+1} \\ 0
\end{bmatrix}^T\otimes P(\lambda),
\]
where $e_{\eta+1}$ and $e_{\epsilon+1}$ denote, respectively, the last columns of the identity matrices $I_{\eta+1}$ and $I_{\epsilon+1}$.
\end{theorem}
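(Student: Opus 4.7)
The plan is to deduce both factorizations as direct specializations of Theorem \ref{thm:one-sided}, using the explicit unimodular embeddings of Example \ref{ex:unimodular}. First I would recognize that, for the dual minimal bases
\[
K_i(\lambda)=L_{\alpha_i}(\lambda^\ell)\otimes I_n,\qquad N_i(\lambda)=\Lambda_{\alpha_i}(\lambda^\ell)^T\otimes I_n,\qquad i=1,2,
\]
with $\alpha_1=\epsilon$ and $\alpha_2=\eta$, Example \ref{ex:unimodular} applied to $V_{\alpha_i}(\lambda)\otimes I_n$ supplies the completions required in Theorem \ref{thm:Embedding}, namely
\[
\widehat{K}_i(\lambda)=e_{\alpha_i+1}^T\otimes I_n,\qquad \widehat{N}_i(\lambda)=\widehat{\Lambda}_{\alpha_i}(\lambda^\ell)^T\otimes I_n,
\]
where $e_j$ denotes the last column of $I_j$ and $\widehat{\Lambda}_k(\lambda)$ is as defined just above the theorem.

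Next I would substitute these identifications into the right-sided factorization of Theorem \ref{thm:one-sided}. Since the hypothesis $P(\lambda)=(\Lambda_\eta(\lambda^\ell)^T\otimes I_n)M(\lambda)(\Lambda_\epsilon(\lambda^\ell)\otimes I_n)$ forces $Q(\lambda)=P(\lambda)$, the left-hand factor is precisely the column displayed in the theorem statement. On the right-hand side, $\widehat{K}_2(\lambda)^T=e_{\eta+1}\otimes I_n$ and the trailing zero block has size $\epsilon n\times n$, so
\[
\begin{bmatrix}\widehat{K}_2(\lambda)^T\\ 0\end{bmatrix}P(\lambda)=\left(\begin{bmatrix}e_{\eta+1}\\ 0\end{bmatrix}\otimes I_n\right)P(\lambda)=\begin{bmatrix}e_{\eta+1}\\ 0\end{bmatrix}\otimes P(\lambda),
\]
by the mixed-product property $(v\otimes I_n)A=v\otimes A$. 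The left-sided factorization follows in entirely analogous fashion, employing
\[
\bigl[\widehat{K}_1(\lambda),\,0\bigr]=\bigl[e_{\epsilon+1}^T\otimes I_n,\,0\bigr]=\left(\begin{bmatrix}e_{\epsilon+1}\\ 0\end{bmatrix}^T\otimes I_n\right)
\]
to express the right-hand factor as the advertised Kronecker product.

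There is no genuine obstacle: the argument is essentially bookkeeping. The only details that require care are (i) verifying that the unimodular embedding described in Example \ref{ex:unimodular} is indeed the one furnished by Theorem \ref{thm:Embedding} for the specific dual minimal bases $L_k(\lambda^\ell)\otimes I_n$ and $\Lambda_k(\lambda^\ell)^T\otimes I_n$, and (ii) tracking the sizes of the zero blocks so that each Kronecker simplification is valid. Both are routine.
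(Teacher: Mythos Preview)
Your proposal is correct and follows essentially the same approach as the paper: specialize Theorem \ref{thm:one-sided} using the explicit unimodular embedding from Example \ref{ex:unimodular} to identify $\widehat{K}_i(\lambda)$ and $\widehat{N}_i(\lambda)$, then rewrite the resulting factors as Kronecker products. The paper's proof is a one-line version of exactly this argument.
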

\begin{proof}
The one-sided factorizations follow immediately from Theorem \ref{thm:one-sided} taking into account that $\widehat{K}_1(\lambda)=e_{\epsilon+1}\otimes I_n$, $\widehat{K}_2(\lambda)=e_{\eta+1}\otimes I_m$, $\widehat{N}_1(\lambda)=\widehat{\Lambda}_\epsilon(\lambda^\ell)^T\otimes I_n$ and $\widehat{N}_2(\lambda)=\widehat{\Lambda}_\eta(\lambda^\ell)^T\otimes I_m$.
\end{proof}

\section{Conclusions}

We have introduced the family of strong block minimal bases matrix polynomials.
This family has allowed us to present a new procedure for constructing strong $\ell$-ifications for $m\times n$ matrix polynomials of grade $g$, which unifies and extends previous constructions \cite{FFP,FPJP}. 
This procedure is valid in the case where $\ell$ divides $nd$ or $md$.
Any strong $\ell$-ification obtained from this approach presents many properties that are desirable for numerical computations.
First, the $\ell$-ification is constructed using
only simple operations on the coefficients of the matrix polynomial.
Second, the left and right minimal indices of the $\ell$-ification and the ones of the matrix polynomial are related by simple rules.
This property implies that the complete eigenstructure of the polynomial can be recovered even in the singular case.
Third, the eigenvectors and minimal bases of the matrix polynomial can be recovered from those of the $\ell$-ification.
Four, the $\ell$-ification presents one-sided factorizations, useful for performing conditioning and local backward error analyses.

In the particular case when $\ell$ divides $d$, we have introduced the family of block Kronecker matrix polynomials, which is a subfamily of strong block minimal bases pencils.
This family has allowed us to construct many companion $\ell$-ifications that are different from the Frobenius-like companion $\ell$-ifications in \cite{IndexSum}.
Furthermore, for any strong $\ell$-ification in the block Kronecker matrix polynomials family, we have shown that the eigenvectors and minimal bases of the matrix polynomial can be recovered from those of the $\ell$-ification without any extra computational cost.

\end{document}